\newtheorem{theorem}{Theorem}[section]
\newtheorem{lemma}[theorem]{Lemma}
\newtheorem{proposition}[theorem]{Proposition}
\newtheorem{corollary}[theorem]{Corollary}
\newtheorem*{claim}{Claim}
\theoremstyle{definition}
\newtheorem{definition}[theorem]{Definition}
\theoremstyle{question}
\newtheorem{ques}[theorem]{Question}
\theoremstyle{remark}
\newtheorem{example}[theorem]{Example}
\newtheorem{remark}[theorem]{Remark}
\numberwithin{equation}{section}
\newcommand{\abs}[1]{\lvert#1\rvert}
\newcommand{\mF}{\mathcal {F}}
\newcommand{\ugd}{\underline{\mathrm{gd}}}
\newcommand{\ucd}{\underline{\mathrm{cd}}}
\newcommand{\vcd}{{\mathrm{vcd}}}
\newcommand{\cd}{{\mathrm{cd}}}
\newcommand{\ue}{{\underline E}}
\newcommand{\omminu}{\Omega_J'}
\newcommand{\B}{B}
\newcommand{\D}{K}
\newcommand{\Br}{B^R}
\newcommand{\Z}{\mathbb Z}
\newcommand{\mQ}{\mathcal Q}
\newcommand{\C}[2]{C_{#1}(#2,R)}
\newcommand{\orb}{\mathcal{O}_{\mF}G}
\newcommand{\orbmod}{\mbox{Mod--}\mathcal{O}_{\mF}G}
\newcommand{\nathom}{\mathrm{Hom}_{\mF}}
\newcommand{\rH}{\widetilde{H}}
\newcommand{\pan}[2]{{#1}_{#2}}
\newcommand{\flink}[2]{\cup_{#2 < #2'}\pan{#1}{#2'}}
\newcommand{\rlink}[2]{\cup_{#2 < #2'}\rpan{#1}{#2'}}
\newcommand{\rpan}[2]{{#1_{#2}}}
\newcommand{\E}[1]{\underline{E}{#1}}
\begin{document}

\tikzset{->-/.style={decoration={
  markings,
  mark=at position .65 with {\arrow{>}}},postaction={decorate}}}

\title[Bestvina complex for group actions]{Bestvina complex for group actions with\\ a strict fundamental domain}

\author{Nansen Petrosyan}

 \thanks{Both authors were supported by the EPSRC First Grant EP/N033787/1. T.\ P.\  was partially supported by (Polish) Narodowe Centrum Nauki, grant no.~UMO-2015/18/M/ST1/00050.}

\author{Tomasz Prytu{\l}a}
\address{School of Mathematics, University of Southampton, Southampton SO17 1BJ, UK}
\email{n.petrosyan@soton.ac.uk}
\email{t.p.prytula@soton.ac.uk}

\subjclass[2010]{Primary 20F65, 05E18, 05E45; Secondary 20E08, 20J06}

\date{\today}

\keywords{complex of groups, classifying space, standard development, Coxeter system, building, virtual cohomological dimension, Bredon cohomological dimension}

\begin{abstract} We consider a strictly developable simple complex of finite groups $G(\mathcal Q)$. We show that Bestvina's construction for Coxeter groups applies in this more general setting to produce a complex that is equivariantly homotopy equivalent to the standard development. When $G(\mathcal Q)$ is non-positively curved, this implies that the Bestvina complex is a cocompact classifying space for proper actions of $G$ of minimal dimension. As an application, we show that for groups that act properly and chamber transitively on a building of type $(W, S)$, the dimension of the associated Bestvina complex is the virtual cohomological dimension of $W$.  
We give further examples and applications in the context of Coxeter groups, graph products of finite groups, locally $6$--large complexes of groups and groups of rational cohomological dimension at most one. Our calculations indicate that, because of its minimal cell structure, the Bestvina complex is well-suited for cohomological computations.\end{abstract}

\maketitle

\setlist[enumerate,1]{before=\itshape,font=\normalfont, leftmargin=1.1cm}

\section{Introduction}\label{sec:intro}

 For a discrete group $G$, a proper $G$--CW--complex is a $G$--CW--complex with only finite cell stabilisers.  A proper $G$--CW--complex $X$ is said to be a \emph{model for a classifying space for proper actions} $\ue G$, if for any finite subgroup $F$ of $G$, the fixed point set 
$X^F$ is contractible.  Such a complex $X$ always exists and is unique up to equivariant homotopy equivalence.   The minimal dimension of any model for $\ue G$ is denoted by $\ugd G$ and is called the {\it geometric dimension for proper actions} of $G$.
The algebraic counterpart of the geometric dimension is the Bredon cohomological dimension $\ucd G$. The relation between these two invariants is analogous to the one between cohomological dimension of a group $G$ and the  minimal dimension of an Eilenberg-Mac~Lane space $K(G, 1)$.  It can be 
shown that $\ucd G= \ugd G$ except when there exist groups $G$ for which $\ucd G=2$ and $\ugd G=3$~\cite{LuckMeintrup, BLN}. 
The Bredon cohomological dimension of $G$ is an upper bound for the cohomological dimension of any torsion-free subgroup of $G$. In particular, for groups that are virtually torsion-free, the virtual cohomological dimension $\vcd G$ always satisfies $\vcd G\leqslant \ucd G$.

The main motivation to study $\ue G$ comes from the Isomorphism Conjectures (see e.g.,\ \cite{BCH}, \cite{Lucksurvey}). Other applications of $\ue G$ include computations in group cohomology and the formulation of a generalisation from finite to infinite groups of the Atiyah-Segal Completion Theorem in topological $K$--theory  (see \cite[\S7-8]{Lucksurvey}).
With these applications in mind, it is always desirable to have models for $\ue G$ with good geometric properties, such as for example, non-positively curved, cocompact, of minimal dimension and cell structure.

In \cite{Best}, for any finitely generated Coxeter system $(W, S)$, Bestvina constructed an acyclic polyhedral complex $B(W,S)$ of dimension equal to $\vcd W$, on which $W$ acts as a reflection group, properly and cocompactly. The same construction produces a contractible $B(W,S)$ with $\dim B(W,S)= \vcd W$ except possibly when $\vcd W=2$. 
In fact, we show that $B(W,S)$ is equivariantly homotopy equivalent to the Davis complex $\Sigma_W$. Therefore $B(W,S)$ is a model for $\ue W$ of minimal dimension. In the main part of the paper, we derive an analogous result in the more general setting of strictly developable simple complexes of finite groups.


A  \emph{simple complex of finite groups} $G(\mathcal Q)$ over a poset $\mathcal Q$ consists of a family of finite groups $\{P_J\}_{J \in \mathcal Q}$ 
 such that whenever $J<T$, then there is an injective, non-surjective homomorphism $P_J \to P_T$. The \emph{fundamental group} of $G(\mathcal Q)$ is defined as the direct limit of the system $\{P_J\}_{J \in \mathcal Q}$. When $G(\mathcal Q)$ is strictly developable, the so-called basic construction provides an analogue of Davis complex which is called the \emph{standard development}. We propose a Bestvina complex for $G(\mathcal Q)$ and obtain the following result.

\begin{theorem}[Theorem~\ref{thm:main}]\label{thm:intromain} Let $G(\mathcal Q)$ be a strictly developable simple complex of finite groups over a poset $\mathcal{Q}$ with the fundamental group $G$. Then 
\begin{enumerate} 
\item the standard development $D(\D, G(\mathcal{Q}))$  and the Bestvina complex $D(\B, G(\mathcal{Q}))$ are $G$--homotopy equivalent,
\item \label{it:mainintro2} if $D(\D, G(\mathcal{Q}))$ is  a model for $\underline{E}G$ then $D(\B, G(\mathcal{Q}))$ is a cocompact model for $\underline{E}G$ 
satisfying
\[\mathrm{dim}(D(B, G(\mathcal{Q}))) = \left\{ \begin{array}{lcr}
      \ucd G		  & \text{ if } & \ucd G \neq 2,\\
      2 \text{ or } 3 & \text{ if } & \ucd G =2. \\

      \end{array} \right.\]
\end{enumerate}
\end{theorem}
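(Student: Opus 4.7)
The plan is to reduce both assertions to a careful comparison of the fundamental domains $\B$ and $\D$ at the level of the underlying complex of groups, and then to transport this comparison through the basic construction $D(-,G(\mathcal{Q}))$.

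For part (1), I would begin by exhibiting a $G(\mathcal{Q})$-equivariant map $f\colon\B\to\D$, most naturally realizing $\B$ as a subcomplex of $\D$, which respects the mirror structure: for every $J\in\mathcal{Q}$, the restriction $f\colon\B_J\to\D_J$ on the corresponding strata is a homotopy equivalence. The key technical input, as in Bestvina's original treatment of the Coxeter case, is that each Bestvina mirror $\B_J$ is contractible. Once a mirror-preserving homotopy equivalence of fundamental domains is in hand, it transports to a $G$-equivariant homotopy equivalence of the developments: the standard development $D(Y,G(\mathcal{Q}))$ admits a description as a (homotopy) colimit of strata indexed by $\mathcal{Q}$, and a natural transformation which is pointwise a homotopy equivalence on strata induces a $G$-homotopy equivalence on colimits via an equivariant Whitehead-type argument.

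For part (2), once (1) is established, the property of being a model for $\underline{E}G$ passes from $D(\D,G(\mathcal{Q}))$ to $D(\B,G(\mathcal{Q}))$: for every finite subgroup $F\leqslant G$ the fixed point sets are homotopy equivalent and $D(\D,G(\mathcal{Q}))^F$ is contractible by assumption. Cocompactness is automatic because $\B$ is a finite complex serving as a strict fundamental domain. The dimension formula then splits into a lower and an upper bound. The lower bound $\dim D(\B,G(\mathcal{Q}))\geqslant\ucd G$ is automatic, since any model for $\underline{E}G$ has dimension at least $\ugd G\geqslant\ucd G$. The upper bound $\dim\B\leqslant\ucd G$, with the anticipated exceptional case when $\ucd G=2$, should follow from the design of $\B$: one removes from $\D$ only those simplices whose deletion does not affect the relevant Bredon cohomology in top degree, mirroring Bestvina's original prescription. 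The exceptional case $\ucd G=2$ reflects the Eilenberg--Ganea-type phenomenon in which $\ugd G$ may equal $3$ while $\ucd G=2$.

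The main obstacle is verifying the contractibility of the Bestvina mirrors $\B_J$ in this generality; in the Coxeter setting Bestvina exploited specific combinatorial features of the nerve of $(W,S)$, and one has to isolate the minimal hypotheses on the poset $\mathcal{Q}$ and the local groups $\{P_J\}$ that guarantee the analogous conclusion here. A secondary obstacle is pinning down $\dim\B$ in terms of $\ucd G$, which requires a careful cohomological analysis of $D(\B,G(\mathcal{Q}))$, most likely via the spectral sequence or Mayer--Vietoris-type decomposition arising from its mirror structure, combined with Bredon-cohomological vanishing criteria for the simplices that are candidates for removal.
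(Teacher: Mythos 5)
The overall scaffolding is right (construct a panel-preserving map, transport through the basic construction, then translate the dimension of $\B$ into a statement about Bredon cohomology), but there are two genuine problems, one conceptual and one where the real work is missing.

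First, you flag ``verifying the contractibility of the Bestvina mirrors $\B_J$'' as the main obstacle and suggest one must isolate hypotheses on $\mathcal{Q}$ and $\{P_J\}$ that guarantee it. This misreads the construction: in Definition~\ref{def:bestcx}, $\B_J$ is \emph{chosen} to be a compact contractible polyhedron of minimal dimension containing $\cup_{J<J'}\B_{J'}$, so contractibility of the panels is definitional, not something to be verified. Such a polyhedron always exists (take the cone if nothing smaller works), and the only subtlety is whether the dimension can be kept equal to that of $\cup_{J<J'}\B_{J'}$, which is the content of Lemma~\ref{lem:polyhedron}. Relatedly, your preferred route of realizing $\B$ as a \emph{subcomplex} of $\D$ is not known to be available in general; it is exactly the content of the subconical condition in Section~\ref{sec:eqdefret} and Question~\ref{ques_cone}. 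Part (1) does not need an embedding: Lemma~\ref{lem:panelhtpy} produces panel maps in both directions between any two panel complexes with contractible panels, unique up to panel homotopy, and Lemma~\ref{lem:ghtpyeq} then transports this to a $G$--homotopy equivalence of basic constructions by the explicit formula $[g,x]\mapsto[g,f(x)]$. This is the same spirit as your ``natural transformation pointwise a homotopy equivalence'' idea, but no embedding, homotopy colimit description, or equivariant Whitehead theorem is needed.

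Second, and more seriously, your treatment of the dimension formula is only half a proof. The lower bound $\dim D(\B,G(\mathcal{Q}))\geqslant\ucd G$ indeed comes for free once $D(\B,G(\mathcal{Q}))$ is a model for $\underline{E}G$. But the upper bound, which is the heart of the theorem, is dismissed with ``should follow from the design of $\B$'' and ``most likely via the spectral sequence or Mayer--Vietoris''. The actual argument is concrete and two-step. One first shows (Proposition~\ref{prop:dimbest}) that $\dim\B$ equals $d=\max\{n : \tilde{H}^{n-1}(\cup_{J<J'}\B_{J'})\neq 0\}$, with the $d\in\{2,3\}$ dichotomy coming from Lemma~\ref{lem:polyhedron}. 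One then shows (Proposition~\ref{prop:bredondim}) that $\ucd G=\max\{n : \tilde{H}^{n-1}(K_{>J})\neq 0\}$; this starts from the Degrijse--Mart\'inez-P\'erez formula $\ucd G=\max\{n : \tilde{H}^n(K_{\Omega_J},K_{>\Omega_J})\neq 0\}$ and simplifies it via a relative Mayer--Vietoris argument together with the observation that the non-surjectivity in Definition~\ref{def:scofg}(\ref{it:scofg2}) forces $K_U\cap K_{\Omega_J\smallsetminus\{U\}}=K_{>U}\cap K_{>\Omega_J\smallsetminus\{U\}}$. Finally, the two expressions agree because $\cup_{J<J'}\B_{J'}$ and $K_{>J}$ are themselves panel complexes over $\mathcal{Q}_{>J}$ with contractible panels, hence homotopy equivalent by Lemma~\ref{lem:panelhtpy}. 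Your phrase ``simplices that are candidates for removal'' again suggests you view $\B$ as a subcomplex of $\D$ obtained by deletion, which is not how it is constructed, and without the DMP formula and its Mayer--Vietoris reduction the upper bound has no proof.
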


To construct the complex $D(\B, G(\mathcal{Q}))$ it is enough to construct a compact polyhedron $\B$. We remark that the definition of $B$ depends only on the poset $Q$ and not, for example, on the subgroups $P_J$. Also, our procedure allows certain flexibility, which can often be used to obtain a complex with a simple cell structure. This, together with the minimal dimension of $D(\B, G(\mathcal{Q}))$ shows that $D(\B, G(\mathcal{Q}))$ is well-suited for cohomological computations.

Observe that when a simple complex of groups $G(\mathcal Q)$ is non-positively curved, then it is strictly developable and also $D(\D, G(\mathcal{Q}))$ becomes a model for $\underline{E}G$. So both parts of Theorem \ref{thm:intromain} apply in this case. In particular, since by a result of Moussong \cite[Theorem~11.1]{Davbuild} buildings are $\mathrm{CAT}(0)$, we obtain the following application of Theorem \ref{thm:intromain} to automorphism groups of buildings.


\begin{theorem}[Theorem~\ref{thm:building}]\label{thm:introbuilding}Let $G$ be a group acting properly and chamber transitively on a building of type $(W,S)$, and let  $G(\mQ)$ be  the associated simple complex of groups. Then $D(B, G(\mathcal{Q}))$ is a cocompact model for $\underline{E}G$ satisfying 

\[\mathrm{dim}(D(B, G(\mathcal{Q}))) = \left\{ \begin{array}{lcr}
      \vcd W        & \text{ if } & \vcd W \neq 2,\\
      2 \text{ or } 3 & \text{ if } & \vcd W=2. \\

      \end{array} \right.\]
\end{theorem}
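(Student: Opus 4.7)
My plan is to reduce the statement directly to Theorem~\ref{thm:intromain} applied to the simple complex of finite groups $G(\mathcal Q)$ naturally associated with the chamber transitive action of $G$ on the building. The argument then splits cleanly into two parts: verifying the hypotheses of that theorem, and translating its dimension formula into $\vcd W$.

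First I would check the hypotheses using Moussong's $\mathrm{CAT}(0)$ theorem. Since $G$ acts chamber transitively with finite chamber stabilisers, the vertex groups of $G(\mathcal Q)$ are the parabolic-type chamber stabilisers and $\mathcal Q$ is the poset of spherical subsets of $S$. By Moussong's theorem, already cited above, the building carries a $G$-invariant piecewise Euclidean $\mathrm{CAT}(0)$ metric, and the standard development $D(\D, G(\mathcal Q))$ is $G$-equivariantly homeomorphic to this Moussong realisation. Hence $G(\mathcal Q)$ is non-positively curved (so in particular strictly developable) and $D(\D, G(\mathcal Q))$ is a cocompact model for $\underline{E}G$. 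Both clauses of Theorem~\ref{thm:intromain} therefore apply and give, in particular, that $D(B, G(\mathcal Q))$ is a cocompact model for $\underline{E}G$ of dimension $\ucd G$ (with the usual $2$ or $3$ ambiguity when $\ucd G = 2$).

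To convert this into the stated dimension formula I would invoke the remark following Theorem~\ref{thm:intromain}: the polyhedron $B$ depends only on the poset $\mathcal Q$. Specialising to $W$ acting on its own Coxeter complex, which is a thin building of type $(W,S)$, produces the very same poset $\mathcal Q$, so the corresponding polyhedron is Bestvina's original $B(W,S)$. Its dimension, as recalled in the introduction, equals $\vcd W$ whenever $\vcd W \neq 2$ and lies in $\{2,3\}$ otherwise. Since $\dim D(B, G(\mathcal Q)) = \dim B$, the desired dimension formula follows, and as a byproduct we recover $\ucd G = \vcd W$ (modulo the usual gap in dimension two).

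The only nontrivial step is the equivariant identification of the standard development with the Moussong realisation of the building: one needs to match $\mathcal Q$ with the poset of spherical subsets of $S$ and verify that the gluing data of the complex of groups reproduce the chamber system of the building. Once this routine but careful bookkeeping is in place, the theorem reduces to quoting Moussong's theorem, Theorem~\ref{thm:intromain}, and Bestvina's dimension count for $B(W,S)$.
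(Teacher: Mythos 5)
Your proposal is correct, but for the dimension count it takes a genuinely different route from the paper. The paper first shows that $D(\D,G(\mathcal Q))$ is a $\mathrm{CAT}(0)$ model for $\underline{E}G$ (as you do, via the Moussong realisation of the building), applies Theorem~\ref{thm:main} to get $\dim D(\B,G(\mathcal Q))=\ucd G$ (up to the usual $\vcd=2$ ambiguity), and then simply cites \cite[Theorem~5.4]{DMP} for the equality $\ucd G=\vcd W$. You instead observe that, by construction, $\B$ and hence $\dim\B$ depend only on the poset $\mathcal Q$ of spherical subsets of $S$, which is determined by $(W,S)$ alone; this makes $\dim\B$ equal to the dimension of the contractible Bestvina complex $B(W,S)$ of $W$ itself, which Bestvina's paper pins down as $\vcd W$ (with the usual $\{2,3\}$ ambiguity when $\vcd W=2$). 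Both approaches are valid. Yours avoids invoking the nontrivial Bredon-cohomological formula of Degrijse--Mart\'{\i}nez-P\'erez, substituting Bestvina's original dimension computation for the Coxeter group; as you note, combined with Theorem~\ref{thm:main} it then re-derives $\ucd G=\vcd W$ as a corollary rather than assuming it. The paper's version is shorter because it quotes the DMP result. One minor imprecision: $\B$ is not unique (it involves choices of minimal contractible polyhedra at each inductive step), so the phrase that the polyhedron for the building \emph{is} Bestvina's original $B(W,S)$ should be that it \emph{has the same dimension} -- but since Proposition~\ref{prop:dimbest} shows $\dim\B$ is determined by the poset, this is all you need.
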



 This result applies to finitely generated Coxeter groups and graph products of finite groups, since both are special cases of groups  acting properly and chamber transitively on a building. 

In \cite[Theorem~5.1]{DMP}, Degrijse and Mart\'\i nez-P\'erez give a general formula for computing the Bredon cohomological dimension of the fundamental group of $G(\mathcal Q)$. We simplify their formula and use it to compute the dimension of the Bestvina complex in Theorem \ref{thm:intromain}. In particular, Theorem~\ref{thm:introbuilding} can be viewed as the geometric counterpart of Theorem~5.4 of \cite{DMP}.  


Theorem \ref{thm:introbuilding} was also motivated by Remark~10.4 of Davis in \cite{Davbuild} where he hints at a possibility of such a construction.  We should also point out that Harlander and Meinert defined the Bestvina complex and obtained dimension bounds in \cite[Theorem~1.2]{HarMei} for graph products of finite groups.\smallskip

When a group $G$ acts properly on a finite dimensional contractible complex $X$ with suitable geometric properties, it is desirable to construct an equivariant deformation retraction of $X$ onto a subcomplex of minimal dimension, the so called `spine' of $X$.  Such spines have been constructed, for example, for certain arithmetic groups such as $\mbox{SL}(n, \Z)$ acting on the symmetric space \cite{Ash}, the outer automorphism groups of free groups acting on the Outer space \cite{Vog}, mapping class groups of punctured  surfaces acting on the Teichm\"{u}ller space \cite{Har} and others. We do not know whether in general the standard development equivariantly deformation retracts onto the Bestvina complex.  But when the Bredon cohomological dimension of the fundamental group of $G(\mathcal Q)$ is at most one, we obtain the following strengthening of Theorem~\ref{thm:intromain}. 

\begin{theorem}[Theorem~\ref{thm:virtfree}]\label{thm:introvirtfree}
Let $G(\mathcal Q)$ be a strictly developable simple complex of finite groups over the poset $\mathcal{Q}$ with the fundamental group $G$. Suppose that $D(\D, G(\mathcal{Q}))$ is  a model for $\underline{E}G$ and that $\ucd G\leqslant 1$. Then $D(K, G(\mathcal{Q}))$ equivariantly deformation retracts onto the tree $D(B, G(\mathcal{Q}))$.
\end{theorem}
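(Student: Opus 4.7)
The plan is to upgrade the $G$--homotopy equivalence of Theorem~\ref{thm:intromain}(1) to an equivariant strong deformation retraction under the additional hypothesis $\ucd G \leqslant 1$. A preliminary observation: by the dimension formula in Theorem~\ref{thm:intromain}(\ref{it:mainintro2}), the complex $D(B, G(\mathcal{Q}))$ has dimension at most $1$, and since it is a model for $\underline{E}G$ it is contractible; hence it is indeed a tree, which justifies the wording of the statement.

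The central step is to realise $B$ as a subcomplex of $K$ in a panel-preserving and $P_J$--equivariant manner. I would exploit the flexibility advertised after Theorem~\ref{thm:intromain} to construct $B$ inductively over the poset $\mathcal{Q}$, choosing for each $J$ the Bestvina panel $B_J$ as a subcomplex of the panel $K_J$ (subdividing $K$ if necessary). Because $\ucd G \leqslant 1$ forces every $B_J$ to have dimension at most $1$, this inductive embedding amounts to placing finitely many points and arcs inside the corresponding $K_J$, compatibly with the face relations of $\mathcal{Q}$ and with the $P_J$--action.

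Once $B$ sits inside $K$ as a subcomplex, the induced inclusion
\[
D(B, G(\mathcal{Q})) \hookrightarrow D(K, G(\mathcal{Q}))
\]
is a $G$--CW inclusion, and by Theorem~\ref{thm:intromain}(1) it is a $G$--homotopy equivalence. The conclusion then follows from the equivariant version of the standard fact that the inclusion of a CW--subcomplex satisfies the homotopy extension property, so a homotopy equivalence that is also a CW inclusion is automatically a strong deformation retraction. Equivariance of the retraction is guaranteed because the inclusion is $G$--equivariant and the $G$--homotopy extension property holds for $G$--CW pairs.

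The main obstacle is the panel-preserving, $P_J$--equivariant embedding $B \hookrightarrow K$. The hypothesis $\ucd G \leqslant 1$ is essential here: it reduces each $B_J$ to a finite graph (typically a few points or short arcs), for which the embedding is almost automatic and compatibility across the face relations of $\mathcal{Q}$ causes no combinatorial difficulty. In higher dimensions the analogous matching between $B_J$ and $K_J$ is significantly more delicate, which is precisely the reason the deformation retraction cannot be extracted from the proof of Theorem~\ref{thm:intromain} in full generality.
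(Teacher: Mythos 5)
Your proposal is essentially correct and follows the same strategy as the paper: the crux in both is to realise $B$ as a panel subcomplex of $K$ such that each inclusion $B_J\hookrightarrow K_J$ is a homotopy equivalence, and then to lift this to a $G$--equivariant deformation retraction of the basic constructions. Where you diverge, mildly, is in the lifting step. The paper first builds a \emph{panel deformation retraction} $r\colon K\to B$ by an inductive argument over $\mathcal Q$ (Lemma~\ref{lem:paneldr}), and then pushes $r$ through the basic construction (Lemma~\ref{lem:groupdr}). You instead lift only the inclusion to a $G$--CW inclusion $D(B,G(\mathcal Q))\hookrightarrow D(K,G(\mathcal Q))$, observe (via the panel homotopy uniqueness of Lemma~\ref{lem:panelhtpy} and Lemma~\ref{lem:ghtpyeq}) that it is a $G$--homotopy equivalence, and invoke the $G$--CW homotopy extension property to conclude that it is a strong $G$--deformation retract. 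This is a legitimate shortcut: the paper's panel-level construction of $r$ is precisely what makes the retraction explicit, whereas your route appeals to a standard abstract fact. Both are fine; your version is slightly shorter, the paper's version is more constructive.

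The one place where your write-up is materially vaguer than the paper is the inductive embedding $B\hookrightarrow K$ itself, which is not quite ``almost automatic.'' You need $B_J$ to be chosen inside $K_J=C(K_{>J})$ so that $B_J\cap K_{J'}=B_{J'}$ for all $J'>J$, i.e.\ compatibility with the previously chosen subpanels. The paper isolates this in the notion of a \emph{subconical} polyhedron and then verifies it under $\ucd G\leqslant 1$: since $\ucd G\leqslant 1$ implies $\widetilde H^n(K_{>J})=0$ for all $n\geqslant 1$ (by Proposition~\ref{prop:bredondim}), every connected component of $B_{>J}$ is contractible, and one takes $B_J$ to be $B_{>J}$ together with one extra vertex $v_J$ (the cone point of $K_J$) joined by a single edge to a chosen vertex in each component. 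This concrete choice simultaneously makes $B_J$ a tree and guarantees the required compatibility $B_J\cap K_{J'}=B_{J'}$. Your ``placing finitely many points and arcs'' gestures at this but does not specify the arcs or explain why compatibility across face relations is unproblematic; making the construction explicit, as the paper does, is what actually closes the argument.

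One small terminological correction: the embedding $B\hookrightarrow K$ is not ``$P_J$--equivariant'' — the panel complexes $B$ and $K$ are fundamental domains and carry no group action. The relevant condition is that the embedding is a panel map (preserves the panel stratification); equivariance only appears once you pass to the basic constructions.
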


As explained in Appendix~\ref{sec:appen}, the assumption that $\ucd G\leqslant 1$ can be weakened to $\ucd_R G\leqslant 1$ where $R$ is either a prime field or a subring of $\mathbb Q$ that contains $1$.  We point out that our proof of Theorem \ref{thm:introvirtfree} does not use the Accessibility Theory of groups and in particular it does not rely on Dunwoody's result \cite[Theorem~1.1]{Dun}. On the other hand, by combining Dunwoody's theorem with Theorem \ref{thm:introvirtfree}, the assumption $\ucd G \leqslant 1$ can be weakened to $\cd_{\mathbb{Q}}G \leqslant 1$.

In Section~\ref{sec:applications}, we discuss several classes of examples. We give explicit examples of Bestvina complex, which show that both the reduction of the dimension and the simplification of the cell structure can be substantial 
when compared with the standard development. We also give examples of groups for which $\ucd G=2$ but $\ugd G=3$, thus showing that the dichotomy in Theorem~\ref{thm:intromain}(\ref{it:mainintro2}) cannot be avoided.
 In fact these examples, first constructed in \cite{BLN},  give a negative answer to a question of Bestvina \cite[Remark~2]{Best}.
On the other hand, since we do not know an example of a group for which $\ugd G=2 $ but $\mathrm{dim}(D(\B, G(\mathcal{Q})))=3$, it is conceivable that one always has  $\mathrm{dim}(D(\B, G(\mathcal{Q}))=\ugd G$. In Appendix~\ref{sec:appen}, we give a construction of a Bestvina complex $B^R$ over a ring $R$ that is a subring of the rationals or a field of prime order and derive the analogue of Theorem \ref{thm:intromain} for the complex $B^R$.

We thank Pierre-Emmanuel Caprace and Ian Leary for helpful conversations.  


\section{Simple complexes of groups and the basic construction}\label{sec:scofgs}

In this section we recall the definitions of a simple complex of groups, a panel complex and the associated basic construction. In our exposition we follow \cite[II.12]{BH}, however, some definitions are adjusted to our purposes. We also prove two lemmas that describe the relationship between basic constructions coming from different panel complexes.\medskip

Throughout this section, let $\mathcal Q$ be a finite poset. 

\begin{definition}\label{def:scofg}A \emph{simple complex of finite groups} $G(\mathcal{Q}$) over $\mathcal Q$ consists of the following data:
\begin{enumerate}
\item \label{it:scofg1} for any $J \in \mathcal Q$ there is a finite group $P_J$, called the \emph{local group} at $J$,
	\item \label{it:scofg2} for any pair 
	$J < T$ there is an injective, non-surjective homomorphism \[\phi_{TJ} \colon P_J \to P_T,\] such that if $J < T < U$ then
  $\phi_{UT} \circ \phi_{TJ} = \phi_{UJ}$.
\end{enumerate}
\end{definition}
Given a simple complex of groups $G(\mathcal{Q}$) one defines its \emph{fundamental group} $\widehat{G(\mathcal{Q})}$ as the direct limit \[\widehat{G(\mathcal{Q})} = \varinjlim_{J \in \mathcal Q}P_J.\] For every $J \in \mathcal Q$ we have the canonical homomorphism $i_J \colon P_J \to \widehat{G(\mathcal{Q})}$. A simple complex of groups $G(\mathcal{Q})$ is called \emph{strictly developable} if every $i_J$ is injective.\medskip

From now on we assume that $G(\mathcal{Q})$ is strictly developable and let $G = \widehat{G(\mathcal{Q})}$. For any $J \in \mathcal{Q}$ we identify the group $P_J$ with its image $i_J(P_J)\subset G$.\medskip

We will now describe a procedure of constructing a space on which $G$ acts called the \emph{Basic construction}. First we need the following definition.

\begin{definition}\label{def:panelcx}A \emph{panel complex} $(X, \{\pan{X}{J}\}_{J \in \mathcal Q})$
 over a poset $Q$ is a compact polyhedron $X$ together with a family of subpolyhedra (called \emph{panels}) $\{\pan{X}{J}\}_{J \in \mathcal Q}$ such that the following conditions are satisfied:

\begin{enumerate}
\item polyhedron $X$ is the union of all panels,

\item if $J \leqslant T$ then $\pan{X}{T} \subseteq \pan{X}{J}$,

\item for any two panels 
their intersection is either empty or it is a union of panels.
\end{enumerate}
\end{definition}

\begin{definition}[Basic construction]
\label{def:basic}
Let $(X, \{\pan{X}{J}\}_{J \in \mathcal Q})$ be a panel complex over $Q$. Define the \emph{basic construction} $D(X, G(\mathcal{Q}))$ as
\[D(X, G(\mathcal{Q}))= (G \times X)/ \sim\]
such that $(g_1,x_1) \sim (g_2,x_2)$ if and only if $x_1=x_2$ and $g_1^{-1}g_2 \in P_{J(x_1)}$ 
 where $\pan{X}{J(x_1)}$ is the intersection of all the panels containing $x_1$. Let $[g,x]$ denote the equivalence class of $(g,x)$.
\end{definition}

There is a $G$--action on $D(X, G(\mathcal{Q}))$ given by $g \cdot [g',x] = [gg', x]$. Note that $D(X, G(\mathcal{Q}))$ has a natural structure of a polyhedral complex, and the $G$--action preserves that structure. Moreover, the stabilisers of this action are conjugates of the local groups $P_J$, and the quotient is homeomorphic to the panel complex $X$. In fact, $X$ is a \emph{strict fundamental domain} for this action, i.e.,\ if we view $X \cong [e,X]$ as a subcomplex of $D(X, G(\mathcal{Q}))$, we have that $X$ intersects every $G$--orbit at precisely one point. Since the local groups are finite and $X$ is compact, we conclude that the action of $G$ on $D(X, G(\mathcal{Q}))$ is proper and cocompact.

We would like to compare basic constructions arising from different panel complexes. For this we need some terminology. A \emph{panel map} between panel complexes $(X, \{\pan{X}{J}\}_{J\in \mathcal Q})$ and $(Y, \{\pan{Y}{J}\}_{J\in \mathcal Q})$ is a map $f \colon X \to Y$ such that for every $J \in \mathcal Q$ we have $f(\pan{X}{J}) \subseteq \pan{Y}{J}$. A \emph{panel homotopy} between panel maps $f_1 $ and $ f_2$ is a homotopy $H \colon X \times I \to Y$ between $f_1$ and $f_2$ such that for every $t \in I$ the restriction $H( -, t) \colon X \to Y$ is a panel map.

The following two lemmas appear to be elementary, however, to the best of our knowledge there are are no proofs of them in the literature. The special case of Coxeter systems is outlined in \cite[Proposition~11.5]{Davis}.

\begin{lemma}\label{lem:panelhtpy}Let $(Y, \{\pan{Y}{J}\}_{J\in \mathcal Q})$ be a panel complex over $\mathcal Q$ such that for every $J \in \mathcal Q$ the panel $\pan{Y}{J}$ is contractible. Then for any panel complex $(X, \{\pan{X}{J}\}_{J\in \mathcal Q})$ there is a panel map \[(X, \{\pan{X}{J}\}_{J\in \mathcal Q}) \to(Y, \{\pan{Y}{J}\}_{J\in \mathcal Q})\] which is unique up to panel homotopy.

In particular, any two panel complexes with contractible panels are panel homotopy equivalent. 
\end{lemma}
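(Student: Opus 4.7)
The plan is to construct the panel map $f \colon X \to Y$ by induction along a linear extension of $\mathcal Q$ chosen so that maximal elements come first, and then to obtain uniqueness by the same argument applied to $X \times I$.

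Fix a linear extension $J_1, \dots, J_n$ of $\mathcal Q$ such that $J_a < J_b$ implies $a > b$, and set $X^{(k)} = \bigcup_{i\leqslant k} \pan{X}{J_i}$. For every $x\in X$ let $J(x) \in \mathcal Q$ denote the unique index such that $\pan{X}{J(x)}$ is the intersection of all panels containing $x$ (such an index exists by the finiteness of $\mathcal Q$ together with axiom~(3) of a panel complex). Since $X$ is the union of all panels, $X = X^{(n)}$.  Assume by induction that $f$ is defined on $X^{(k)}$ as a panel map, and set $A = \pan{X}{J_{k+1}} \cap X^{(k)}$, which is a subpolyhedron of $\pan{X}{J_{k+1}}$. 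For any $x\in A$ one has $J(x) \geqslant J_{k+1}$, and if $J(x) > J_{k+1}$ then in view of the ordering $J(x) = J_m$ for some $m \leqslant k$, so $f(x) \in \pan{Y}{J_m} \subseteq \pan{Y}{J_{k+1}}$; if $J(x) = J_{k+1}$ then $f(x) \in \pan{Y}{J_{k+1}}$ already. Hence $f|_A$ lands in $\pan{Y}{J_{k+1}}$. Because the pair $(\pan{X}{J_{k+1}}, A)$ is a polyhedral (hence cofibrant) pair and $\pan{Y}{J_{k+1}}$ is contractible, $f|_A$ extends to a map $\pan{X}{J_{k+1}} \to \pan{Y}{J_{k+1}}$.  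It remains to check this extension satisfies all panel conditions on $X^{(k+1)}$: for $x$ in $\pan{X}{J_{k+1}}\setminus X^{(k)}$ the index $J(x)$ cannot exceed $J_{k+1}$ (else $x$ would lie in a previously processed panel), so the only constraint is $f(x)\in \pan{Y}{J}$ for $J\leqslant J(x)=J_{k+1}$, and this is ensured by $\pan{Y}{J_{k+1}} \subseteq \pan{Y}{J}$.

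For uniqueness, endow $X \times I$ with the panel complex structure $\{\pan{X}{J} \times I\}_{J\in \mathcal Q}$. Given two panel maps $f_0, f_1 \colon X \to Y$, define $H$ on $X \times \{0,1\}$ by $f_0$ and $f_1$ and run the same inductive extension over $\pan{X}{J_{k+1}} \times I$; the set on which $H$ is already defined maps into the contractible panel $\pan{Y}{J_{k+1}}$ by the same argument as above, and extends. The resulting $H$ is a panel homotopy between $f_0$ and $f_1$. The final assertion then follows formally: given two panel complexes $X$ and $Y$ with contractible panels, the first part of the lemma produces panel maps $f\colon X\to Y$ and $g\colon Y\to X$, and the uniqueness applied to $g\circ f$ and $\mathrm{id}_X$ (both panel maps $X\to X$, and $X$ has contractible panels) and similarly to $f\circ g$ and $\mathrm{id}_Y$ yields the required panel homotopy equivalence.

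The main technical point I expect to require care is the combinatorial verification that at each inductive step the previously defined map sends $A = \pan{X}{J_{k+1}}\cap X^{(k)}$ into $\pan{Y}{J_{k+1}}$, and that freely extending on $\pan{X}{J_{k+1}}\setminus X^{(k)}$ does not violate constraints imposed by as-yet-unprocessed panels. Both issues are resolved by the key observation that the choice of linear extension forces $J(x) = J_{k+1}$ for every $x\in \pan{X}{J_{k+1}} \setminus X^{(k)}$; everything else is the standard extension-of-maps-into-contractible-targets argument along a cofibration.
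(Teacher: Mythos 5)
Your proof takes essentially the same approach as the paper's: induct from the maximal elements of $\mathcal Q$ downward, at each stage extending the partially defined map over a panel using contractibility of the target panel (a cofibration/obstruction argument, which is the content of the reference to Hatcher that the paper invokes), and then obtain uniqueness by running the identical induction on $X\times I$ with the given maps prescribing the ends. The paper organizes the induction by defining $f_J$ on $\pan{X}{J}$ as an extension of $\cup_{J<J'}f_{J'}$ from $\flink{X}{J}$, whereas you refine the partial order to a linear extension $J_1,\dots,J_n$ and build up $f$ on $X^{(k)}=\cup_{i\leqslant k}\pan{X}{J_i}$; these are the same induction, with yours making the well-ordering explicit and spelling out the compatibility check (that $f|_A$ already lands in $\pan{Y}{J_{k+1}}$ and that extending freely off $X^{(k)}$ imposes no further constraints). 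The "in particular" clause is derived the same way, formally from existence plus uniqueness-up-to-panel-homotopy applied to $g\circ f$ versus $\mathrm{id}_X$ and $f\circ g$ versus $\mathrm{id}_Y$. So this is a correct proof in the same spirit; the only thing worth flagging is that both your write-up and the paper's rely on the implicit structural feature of panel complexes that a point's minimal panel $\pan{X}{J(x)}$ is well defined and that every panel containing $x$ has index $\leqslant J(x)$ — neither proof makes this explicit, and yours leans on it a bit more visibly via the assertions "$J(x)\geqslant J_{k+1}$" and "the only constraint is $f(x)\in\pan{Y}{J}$ for $J\leqslant J(x)$."
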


\begin{proof}First we prove the existence. 
 We will construct a family of maps $f_J \colon \pan{X}{J} \to \pan{Y}{J}$ such that if $J < J'$ then $f_{J}|_{J'}=f_{J'}$. After this is done, the required map $f \colon X \to Y$ can be defined as $f= \cup_{J \in \mathcal Q} f_J$.


For every maximal element $J \in \mathcal Q$ choose any map $ f_J \colon \pan{X}{J} \to \pan{Y}{J}$. Now given a panel $\pan{X}{J}$ such that for all $J'$ with $J < J'$ the map $f_{J'} \colon \pan{X}{J'} \to \pan{Y}{J'}$ has already been defined, we are searching for the following extension $f_J$:
\[
\begin{tikzcd}[column sep=5em]
 \flink{X}{J} \arrow{r}{\cup_{J < J'}f_{J'}} \arrow[hookrightarrow]{d} & \flink{Y}{J} \arrow[hookrightarrow]{d} \\
         \pan{X}{J}  \arrow[dashed]{r}{f_J} &  \pan{Y}{J}
\end{tikzcd}
\]
Since $\pan{Y}{J}$ is contractible, by \cite[Lemma~4.7]{Hat} the extension exists. This finishes the proof of the existence.

For the uniqueness, we proceed analogously. Given two panel maps $f, h \colon X \to Y$ let $f_J$ (resp.\ $h_J$) denote the restriction of $f $ (resp.\ $h$) to the panel $\pan{X}{J}$ and let $H_J \colon \pan{X}{J} \times I \to \pan{Y}{J}$ denote the homotopy between $f_J$ and $h_J$. This time, given $H_{J'}$ for every $J'$ with $J < J'$, we are looking for the extension $H_{J}$:
\[
\begin{tikzcd}[column sep=9em]
 (\pan{X}{J} \times \{0\}) \cup (\flink{X}{J} \times I) \cup (\pan{X}{J} \times \{1\}) \arrow{r}{f_{J} \,\cup \, (\cup_{J < J'} H_{J'}) \, \cup \, h_{J}} \arrow[hookrightarrow]{d} & \flink{Y}{J} \arrow[hookrightarrow]{d} \\
         \pan{X}{J} \times I  \arrow[dashed]{r}{H_J} &  \pan{Y}{J}
\end{tikzcd}
\]
As in the first case, the existence of such extension follows from contractibility of $\pan{Y}{J}$ and \cite[Lemma~4.7]{Hat}.\end{proof}

\begin{lemma}\label{lem:ghtpyeq}
Let $G(\mathcal Q)$ be a simple complex of finite groups over the poset $\mathcal{Q}$ and let $(X, \{\pan{X}{J}\}_{J\in \mathcal Q})$ and $(Y, \{\pan{Y}{J}\}_{J\in \mathcal Q })$ be two panel complexes over $\mathcal Q$. If $X$ and $Y$ are panel homotopy equivalent then the basic constructions $D(X, G(\mathcal{Q}))$ and $D(Y, G(\mathcal{Q}))$ are $G$--homotopy equivalent.
\end{lemma}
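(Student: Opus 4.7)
The plan is to show that the basic construction is functorial with respect to panel maps, in a way that carries panel homotopies to $G$--homotopies. Explicitly, to any panel map $f \colon X \to Y$ I want to associate a $G$--equivariant map $\tilde{f} \colon D(X, G(\mathcal{Q})) \to D(Y, G(\mathcal{Q}))$ defined by $\tilde{f}([g, x]) = [g, f(x)]$, and to any panel homotopy $H \colon X \times I \to Y$ between $f_1$ and $f_2$ a $G$--homotopy $\tilde{H}([g, x], t) = [g, H(x, t)]$ between $\tilde{f}_1$ and $\tilde{f}_2$. Granted these constructions, a panel homotopy equivalence $f$ with inverse $h$ yields $\tilde{f}$ and $\tilde{h}$ satisfying $\tilde{h} \circ \tilde{f} = \widetilde{h \circ f}$ and $\tilde{f} \circ \tilde{h} = \widetilde{f \circ h}$ by strict functoriality, while the given panel homotopies $h \circ f \simeq \mathrm{id}_X$ and $f \circ h \simeq \mathrm{id}_Y$ produce $G$--homotopies to the identities, finishing the argument.

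The main step, and the principal obstacle, is to check that $\tilde{f}$ is well defined on equivalence classes. Suppose $(g_1, x) \sim (g_2, x)$, so that $g_1^{-1} g_2 \in P_{J(x)}$, where $\pan{X}{J(x)}$ is the intersection of all panels of $X$ containing $x$. I must show $g_1^{-1} g_2 \in P_{J(f(x))}$, which amounts to establishing the subgroup inclusion $P_{J(x)} \subseteq P_{J(f(x))}$ inside $G$. Since $f$ is a panel map, every $J$ with $x \in \pan{X}{J}$ also satisfies $f(x) \in \pan{Y}{J}$; intersecting all such panels in $Y$ and comparing with the intersection defining $J(f(x))$ yields $\pan{Y}{J(f(x))} \subseteq \pan{Y}{J(x)}$. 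Unpacking the order-reversing correspondence between $\mathcal{Q}$ and the panels in Definition~\ref{def:panelcx} translates this into the poset relation $J(x) \leqslant J(f(x))$ in $\mathcal{Q}$, whereupon Definition~\ref{def:scofg}, together with strict developability (which identifies each $P_J$ with its image in $G$), supplies the required inclusion $P_{J(x)} \subseteq P_{J(f(x))}$. The same reasoning, applied slicewise in $t$, shows that $\tilde{H}$ is well defined.

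The remaining checks are routine: $G$--equivariance of $\tilde{f}$ is immediate from the formula, and continuity follows since $\tilde{f}$ is covered by $\mathrm{id}_G \times f$ on the product $G \times X$ before passage to the quotient. Combining these observations with the functoriality of the assignment $f \mapsto \tilde{f}$ and its compatibility with panel homotopies delivers the conclusion; the only genuinely nontrivial input is the bookkeeping around $J(x) \leqslant J(f(x))$ discussed above.
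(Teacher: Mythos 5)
Your proposal follows the same route as the paper: define $\tilde{f}([g,x]) = [g,f(x)]$ and $\widetilde{H}([g,x],t) = [g,H(x,t)]$, observe strict functoriality, and transport the panel homotopies $h\circ f \simeq \mathrm{id}_X$, $f\circ h \simeq \mathrm{id}_Y$ to $G$--homotopies. The paper states well-definedness as "one easily checks"; you supply the check, and it is the right check. One small imprecision worth flagging: the passage from $\pan{Y}{J(f(x))} \subseteq \pan{Y}{J(x)}$ to $J(x) \leqslant J(f(x))$ via "the order-reversing correspondence" is not quite licensed, since the assignment $J \mapsto \pan{Y}{J}$ is in general neither injective nor order-reflecting (Figure~\ref{fig:posetandcomplexes} explicitly exhibits $\pan{\B}{T} = \pan{\B}{U}$ for $T < U$). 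The clean way to get what you want is more direct: $x \in \pan{X}{J(x)}$, so $f(x) \in \pan{Y}{J(x)}$ because $f$ is a panel map; hence $J(x)$ lies in the set $\{J : f(x) \in \pan{Y}{J}\}$, whose largest element is by definition $J(f(x))$, giving $J(x) \leqslant J(f(x))$ and therefore $P_{J(x)} \subseteq P_{J(f(x))}$. With that repair the argument is complete and matches the paper's.
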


\begin{proof} Let $f \colon X \to Y$ and $h \colon Y \to X$ be the two panel maps such that $h \circ f$ (resp.\ $f \circ h$) is panel homotopic to the identity on $X$ (resp.\ $Y$). Define maps  $\tilde{f} \colon D(X, G(\mathcal{Q})) \to D(Y, G(\mathcal{Q}))$  and  $\tilde{h} \colon D(Y, G(\mathcal{Q})) \to D(X, G(\mathcal{Q}))$ by 
\[\tilde{f}([g,x])= [g,f(x)] \text{ and } \tilde{h}([g,x])= [g,h(x)].\]
One easily checks that both $\tilde{f}$ and $\tilde{h}$ are $G$--maps. 

Now let $H \colon X \times I \to X$ be the panel homotopy between $h \circ f$ and $\mathrm{id}_X$. Define the map $\widetilde{H} \colon D(X, G(\mathcal{Q})) \times I \to D(X, G(\mathcal{Q}))$ by 
\[\widetilde{H}([g,x],t)=[g,H(x,t)].\]
It is straightforward to check that $H$ is a $G$--homotopy between $\tilde{h}\circ \tilde{f}$ and $\mathrm{id}_{D(X, G(\mathcal{Q}))}$ A $G$--homotopy between $\tilde{f} \circ \tilde{h}$ and $\mathrm{id}_{D(Y, G(\mathcal{Q}))}$ is defined in the analogous way.
\end{proof}


\begin{remark}The notion of a panel complex appears in the literature under different names including \emph{stratified space} \cite{BH} or \emph{mirrored space} \cite{Davbook}. In our choice of terminology and notation we mostly follow \cite{DMP}. Consequently, any non-standard assumptions that we make (e.g.,\ a non-surjectivity assumption in Definition~\ref{def:scofg}.(\ref{it:scofg2})) are that of \cite{DMP}.\end{remark}

\section{Standard development and Bestvina complex}\label{sec:sdabc}

In this section we define two panel complexes over the poset $Q$: the `standard' complex $\D$ and the Bestvina complex $\B$. We compute the dimension of $\B$ and we give a simplification of the formula for $\underline{\mathrm{cd}}(G)$ where $G$ is the fundamental group of a simple complex of finite groups $G(\mathcal Q)$. After this is done, we prove Theorem~\ref{thm:main}.

As in the previous section, let $Q$ be a finite poset. There is a canonical panel complex associated to the poset $\mathcal Q$.

\begin{definition}\label{def:daviscx}Define the panel complex $(\D, \{\pan{\D}{J}\}_{J \in \mathcal Q})$ by \[\D= | \mathcal Q| \text{ and } \pan{\D}{J} = | \mathcal Q_{\geqslant J} |,\] where $\abs{-}$ denotes the geometric realisation of a poset, and $\mathcal Q_{\geqslant J}$ is a subposet of $\mathcal Q$ which consists of all elements that are greater or equal to $J$.
The basic construction $D(\D, G(\mathcal{Q}))$ will be referred to as the \emph{standard development} of the complex of groups $G(\mathcal Q)$.\end{definition}

Below we present another way to construct the complex $\D$ which will motivate the construction of the Bestvina complex.\medskip


For every maximal element $J \in \mathcal Q$ define $\pan{\D}{J}$ to be a point. Now given an element $J \in \mathcal Q$, suppose that for every $J'$ with $J < J'$ the panel $\pan{\D}{J'}$ has already been constructed. Define $\pan{\D}{J}$ to be the cone over the union $\flink{\D}{J}$ (the cone point corresponds to vertex $J$).\medskip

Now we define the Bestvina panel complex over the poset $Q$. 

\begin{definition}[Bestvina complex]\label{def:bestcx} The {\it Bestvina panel complex} $(\B, \{\pan{\B}{J}\}_{J \in \mathcal Q})$ is defined as follows. For every maximal element $J \in Q$ define $\pan{\B}{J}$ to be a point. Now given $J$ such that for all $J'$ with $J < J'$ the panel $\pan{\B}{J'}$ has already been constructed, 
define $\pan{\B}{J}$ to be a compact contractible polyhedron containing $\flink{\B}{J}$ of the smallest possible dimension.
\end{definition}

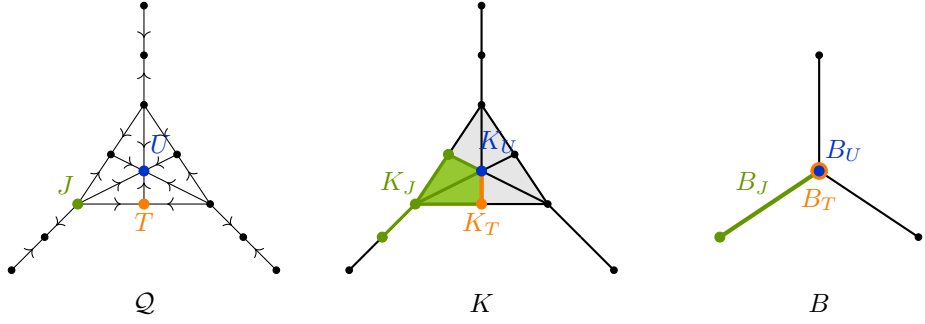
\begin{figure}[!h]
\centering
\begin{tikzpicture}[scale=0.44]

\definecolor{vlgray}{RGB}{230,230,230}
\definecolor{red}{RGB}{100,150,00}
 \definecolor{tgreen}{RGB}{250,125,000}
\definecolor{lred}{RGB}{150,200,050}

 \definecolor{blue}{RGB}{000,50,200}


\begin{scope}[shift={(-10.2,0)}]
\draw[ ->-] (-2,0) to (0,0);
\draw[->-] (-2,0) to (0,1);
\draw[ ->-] (-2,0) to (-1,1.5);
\draw[->-] (-2,0) to (-3,-1);

\draw[ ->-] (2,0) to (0,0);
\draw[->-] (2,0) to (0,1);
\draw[->-] (2,0) to (1,1.5);
\draw[ ->-] (2,0) to (3,-1);

\draw[ ->-] (0,3) to (0,4.5);

\draw[->-] (0,3) to (0,1);

\draw[->-] (0,3) to (-1,1.5);
\draw[->-] (0,3) to (1,1.5);

\draw[->-] (-4,-2) to (-3,-1);
\draw[->-] (4,-2) to (3,-1);
\draw[->-] (0,6) to (0,4.5);

\draw[->-] (0,0) to (0,1);
\draw[->-] (-1,1.5) to (0,1);
\draw[->-] (1,1.5) to (0,1);

\draw[fill] (-2,0)   circle [radius=0.1];

\draw[fill] (0,0)   circle [radius=0.1];

\draw[fill] (2,0)   circle [radius=0.1];

\draw[fill] (3,-1)   circle [radius=0.1];

\draw[fill] (4,-2)   circle [radius=0.1];

\draw[fill] (-3,-1)   circle [radius=0.1];

\draw[fill] (-4,-2)   circle [radius=0.1];

\draw[fill] (0,1)   circle [radius=0.1];

\draw[fill] (-1,1.5)   circle [radius=0.1];

\draw[fill] (1,1.5)   circle [radius=0.1];

\draw[fill] (0,3)   circle [radius=0.1];

\draw[fill] (0,4.5)   circle [radius=0.1];

\draw[fill] (0,6)   circle [radius=0.1];

\draw[fill=red, red] (-2,0)   circle [radius=0.15];

\draw[fill=tgreen,tgreen] (0,0)   circle [radius=0.15];

\draw[fill=blue, blue] (0,1)   circle [radius=0.15];


\node   at (0,-3)  {$\mathcal{Q}$};

\node [below ,red]   at (-2.375,1.125)  {$J$};
\node [below, tgreen]   at (0,0)  {$T$};
\node [above, blue]   at (0.5,1.25)  {$U$};

\end{scope}


\draw[fill=vlgray] (-2,0)-- (2,0)--(0,3)--(-2,0);

\draw[fill] (2,0)   circle [radius=0.1];

\draw[fill] (4,-2)   circle [radius=0.1];

\draw[fill] (-3,-1)   circle [radius=0.1];

\draw[fill] (-4,-2)   circle [radius=0.1];

\draw[fill] (0,1)   circle [radius=0.1];

\draw[fill] (-1,1.5)   circle [radius=0.1];

\draw[fill] (1,1.5)   circle [radius=0.1];

\draw[fill] (0,3)   circle [radius=0.1];

\draw[fill] (0,4.5)   circle [radius=0.1];

\draw[fill] (0,6)   circle [radius=0.1];

\draw[thick] (2,0) to (0,0);
\draw[thick] (2,0) to (0,1);
\draw[thick] (2,0) to (1,1.5);
\draw[thick] (2,0) to (3,-1);

\draw[thick] (0,3) to (0,4.5);

\draw[thick] (0,3) to (0,1);

\draw[thick] (0,3) to (-1,1.5);
\draw[thick] (0,3) to (1,1.5);

\draw[thick] (-4,-2) to (-3,-1);
\draw[thick] (4,-2) to (3,-1);
\draw[thick] (0,6) to (0,4.5);

\draw[thick] (1,1.5) to (0,1);


\draw[very thick,red, fill=lred] (-2,0)-- (0,0)  --(0,1)--(-2,0);
\draw[very thick, red] (-2,0) to (0,1);

\draw[very thick ,red] (-2,0) to (-1,1.5);
\draw[very thick, red] (-2,0) to (-3,-1);
\draw[very thick, red] (0,0) to (0,1);
\draw[ very thick, red,  fill=lred] (0,0)-- (0,1) -- (-1, 1.5) --(-2,0)--(0,0);
\draw[very thick, red] (-2,0) to (0,1);

\draw[red,fill=red] (-2,0)   circle [radius=0.15];

\draw[fill=red, red] (-1,1.5)   circle [radius=0.15];

\draw[fill=red, red] (-3,-1)   circle [radius=0.15];

\draw[ultra thick, tgreen] (0,0) to (0,1);
\draw[fill=tgreen,tgreen] (0,0)   circle [radius=0.15];

\draw[fill=blue,blue] (0,1)   circle [radius=0.15];

\node   at (0,-3)  {$\D$};

\node [below ,red]   at (-2.5,1.25)  {$\D_J$};
\node [below, tgreen]   at (0,0)  {$\D_T$};

\node [above, blue]   at (0.5,1.25)  {$\D_U$};



\begin{scope}[shift={(10.2,0)}]
\draw[thick] (3,-1) to (0,1);
\draw[thick] (-3,-1) to (0,1);
\draw[thick] (0, 4.5) to (0,1);

\draw[fill] (3,-1)   circle [radius=0.1];

\draw[fill] (-3,-1)   circle [radius=0.1];

\draw[fill] (0,1)   circle [radius=0.1];

\draw[fill] (0,4.5)   circle [radius=0.1];

\draw[ultra thick, red] (-3,-1) to (0,1);
\draw[fill=red, red] (-3,-1)   circle [radius=0.15];

\draw[fill=tgreen, tgreen] (0,1)   circle [radius=0.25];
\draw[fill=blue, blue] (0,1)   circle [radius=0.15];

\node   at (0,-3)  {$\B$};

\node [below ,red]   at (-2,1.25)  {$\pan{\B}{J}$};
\node [below, tgreen]   at (0,0.75)  {$\pan{\B}{T}$};
\node [above, blue]   at (0.75,1)  {$\pan{\B}{U}$};

\end{scope}


\end{tikzpicture}
\caption{Poset $\mathcal{Q}$ and complexes $\D$ and $\B$. The panels of $\D$ and $\B$ corresponding to elements 
 $J<T<U$ are respectively green, orange and blue. Observe that we have $\pan{\B}{T}=\pan{\B}{U}$.}
\label{fig:posetandcomplexes}
\end{figure}

Note that for any compact polyhedron $L$, the cone $C(L)$ is a compact contractible polyhedron containing $L$. However, $\mathrm{dim}(C(L))=\mathrm{dim}(L)+1 $. The following lemma gives a sufficient condition for the existence of the contractible polyhedron that has the same dimension as $L$. It was first proved by Bestvina  \cite[Lemma p.~21]{Best}. We include the proof for the sake of completeness and also because our assumptions are slightly more general. An even more general version (which we use in Appendix~\ref{sec:appen}) is proved in \cite[Lemma~24]{LeSa}.\medskip

\noindent
Unless stated otherwise, all (co)homology groups are taken with  coefficients in $\mathbb{Z}$.
\begin{lemma}\label{lem:polyhedron}Let $L$ be a compact polyhedron of dimension $n \neq 2$. If $\rH^n(L)=0$ then $L$ embeds into a contractible compact polyhedron of dimension $n$.  
\end{lemma}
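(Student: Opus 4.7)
The plan is to handle the cases $n=0$ and $n=1$ by elementary observations, and to address the main case $n\ge 3$ by iteratively attaching cells of dimensions $1$ through $n$ to construct a contractible extension of $L$ of the same dimension.

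For $n=0$, the hypothesis $\tilde H^0(L)=0$ forces $L$ to be a single point, already contractible. For $n=1$, Universal Coefficients (using that $H_0(L)$ is free) gives $H_1(L)=0$, so $L$ is a forest, which embeds in a tree by adding finitely many edges between components.

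For $n\ge 3$, Universal Coefficients applied to $\tilde H^n(L)=0$ yields both $H_n(L)=0$ (since $H_n(L)$ is free, being a subgroup of $C_n(L)$) and that $H_{n-1}(L)$ is free (since $\operatorname{Ext}(H_{n-1}(L),\mathbb{Z})=0$). The goal is to construct $L'\supseteq L$ of dimension $n$ that is $(n-1)$-connected with $H_n(L')=0$; then Hurewicz gives $\pi_n(L')\cong H_n(L')=0$, and Whitehead's theorem forces $L'$ to be contractible. First, attach cells of dimensions $1,2,\ldots,n-1$ in turn to kill $\pi_0,\pi_1,\ldots,\pi_{n-2}$, obtaining an $(n-2)$-connected polyhedron $M\supseteq L$ of dimension $n$. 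Since no $n$-cells are added, $H_n(M)=H_n(L)=0$. The crucial observation is that $H_{n-1}(M)$ is free: attachments of cells of dimension less than $n-1$ do not affect $H_{n-1}$, and at the final substep, where $(n-1)$-cells $e_i$ are attached with $\partial e_i=\tilde g_i$, a direct chain-level computation yields
\[
H_{n-1}(M)\;\cong\;H_{n-1}(L)\oplus\ker\phi,
\]
where $\phi\colon\mathbb{Z}^k\to H_{n-2}$ sends the $i$-th basis vector to $[\tilde g_i]$. Being a subgroup of $\mathbb{Z}^k$, the kernel $\ker\phi$ is free, so $H_{n-1}(M)$ is free. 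To finish, attach $n$-cells along cycles representing a free basis of $H_{n-1}(M)$; the analogous chain-level analysis, combined with the fact that each basis element has infinite order, shows $H_n(L')=0$ while $H_{n-1}(L')=0$, so $L'$ is $(n-1)$-connected with vanishing $H_n$, hence contractible.

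The main technical obstacle is the freeness of $H_{n-1}(M)$, which demands careful bookkeeping to track how each substep of cell attachment modifies the cycles and boundaries in the chain complex. The exclusion $n\neq 2$ is essential: in dimension two there is no room to kill the extra $H_2$ classes introduced when $2$-cells are attached to make $L$ simply connected, a phenomenon closely related to the open Whitehead asphericity conjecture.
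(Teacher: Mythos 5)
Your proof is correct and follows the paper's strategy: Universal Coefficients gives $H_n(L)=0$ and $H_{n-1}(L)$ free; build an $(n-2)$-connected extension $M$ of the same dimension with $H_n(M)=0$ and $H_{n-1}(M)$ free; attach $n$-cells along a basis of $\pi_{n-1}(M)\cong H_{n-1}(M)$ and conclude by Hurewicz and Whitehead. The one place you genuinely diverge is the middle step. You attach cells dimension by dimension and verify freeness of $H_{n-1}(M)$ by tracking it through a long-exact-sequence argument, and you identify this bookkeeping as the main technical hurdle. The paper sidesteps it entirely: it replaces $L$ by $L\cup C(L^{(n-2)})$ in a single move --- this is $(n-2)$-connected because it is homotopy equivalent to $L/L^{(n-2)}$, which has no cells in dimensions $1,\dots,n-2$ --- and then simply re-applies the Universal Coefficients argument to the new complex, which is legitimate since $H^n$ is unchanged (no new $n$-cells are added, and the new $(n-1)$-cells contribute nothing to $\operatorname{im}\delta^{n-1}$ because the boundaries of the $n$-cells of $L$ lie entirely in $L^{(n-1)}$). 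Two small points worth adding to your write-up: the attaching maps of the $n$-cells should be chosen PL so that $L'$ is again a polyhedron and not merely a CW complex (the paper notes this explicitly); and the concrete obstruction at $n=2$ (cf.\ Remark~\ref{rem:2dimacyclic}) is the existence of acyclic $2$-complexes with finite non-trivial fundamental group that cannot embed in any contractible $2$-complex, which is a sharper and more directly relevant statement than a link to the Whitehead asphericity conjecture.
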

\begin{proof}
Note that if  $n \leqslant 1$ then there is nothing to prove, thus we can assume that $n \geqslant 3$. Let $C(L^{(n-2)})$ be the cone on the $(n-2)$--skeleton of $L$. By replacing $L$ with $L \cup C(L^{(n-2)})$ if necessary, we can assume that $L$ is $(n-2)$--connected.
 By the Universal Coefficients Theorem we have
\[\rH^n(L) \cong \mathrm{Hom}(\rH_n(L), \mathbb{Z}) \oplus \mathrm{Ext}(\rH_{n-1}(L), \mathbb{Z}).\]
We conclude that $\rH_n(L)$ is torsion, and hence it must be trivial (because $\mathrm{dim}(L)=n$ and thus $\rH^n(L)=\mathrm{ker}(d_n)$ is torsion-free).
We also obtain that $\rH_{n-1}(L)$ is a finitely generated free abelian group. 
Since $L$ is $(n-2)$--connected, by the Hurewicz Theorem we have $\pi_{n-1}(L) \cong \rH_{n-1}(L)$. 
For each generator $[f]$ of $\pi_{n-1}(L)$ attach to $L$ an $n$--cell along the map $f \colon S^{n-1} \to L$ and call the resulting space $L'$. We can choose each $f$ to be a PL-map and thus $L'$ is a polyhedron. One easily checks that $L'$ is contractible.
\end{proof}

\begin{proposition}\label{prop:dimbest}Let $d$ be an integer defined as 
\begin{equation}\label{eq:dimbestv}d= \mathrm{max}\{n \in \mathbb{N} \mid \rH^{n-1}\big(\flink{\B}{J}\big) \neq 0 \text{ for some } J \in \mathcal Q \}.\end{equation}
Then the dimension of Bestvina complex is given by
\[\mathrm{dim}(\B) = \left\{ \begin{array}{lcr}

      d 			  & \text{ if } & d \neq 2,\\
      2 \text{ or } 3 & \text{ if } & d =2. \\

      \end{array} \right.\]

\end{proposition}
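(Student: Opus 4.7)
The plan is to establish the lower bound $\dim(B) \geq d$ and the upper bound (namely $\dim(B) \leq d$ when $d \neq 2$, and $\dim(B) \leq 3$ when $d = 2$) separately. Throughout, let $L_J = \flink{B}{J}$, so that by construction $\pan{B}{J}$ is a compact contractible polyhedron of minimal dimension containing $L_J$, and $\dim(B) = \max_{J \in \mathcal Q} \dim(\pan{B}{J})$.

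For the lower bound, pick any $J \in \mathcal Q$ witnessing the maximum in \eqref{eq:dimbestv}, so that $\rH^{d-1}(L_J) \neq 0$. Since $\pan{B}{J}$ is contractible, the long exact sequence of the pair $(\pan{B}{J}, L_J)$ gives
$$H^d(\pan{B}{J}, L_J) \cong \rH^{d-1}(L_J) \neq 0.$$
As relative cohomology of a CW-pair vanishes above the dimension of the ambient complex, this forces $\dim(\pan{B}{J}) \geq d$, and hence $\dim(B) \geq d$.

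For the upper bound, I argue by downward induction on $\mathcal Q$ that $\dim(\pan{B}{J}) \leq d$ when $d \neq 2$, and $\dim(\pan{B}{J}) \leq 3$ when $d = 2$. The base case is trivial since the panel at a maximal element of $\mathcal Q$ is a point. For the inductive step, it suffices to exhibit \emph{some} contractible polyhedron containing $L_J$ of the required dimension, because $\pan{B}{J}$ is chosen to minimise the dimension among such polyhedra. The inductive hypothesis gives $\dim(L_J) \leq d$ (resp.\ $\leq 3$). In the case $d \neq 2$: if $\dim(L_J) < d$, the cone $C(L_J)$ has dimension at most $d$; if $\dim(L_J) = d$, then the maximality of $d$ yields $\rH^d(L_J) = 0$, so Lemma~\ref{lem:polyhedron}, applicable since $d \neq 2$, produces a contractible polyhedron of dimension exactly $d$ containing $L_J$. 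In the case $d = 2$: if $\dim(L_J) \leq 2$, the cone has dimension at most $3$; if $\dim(L_J) = 3$, then the definition of $d$ gives $\rH^3(L_J) = 0$, and Lemma~\ref{lem:polyhedron} applied with $n = 3$ provides a contractible polyhedron of dimension $3$ containing $L_J$.

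The main obstruction is the dimension-two exception in Lemma~\ref{lem:polyhedron}: a $2$-dimensional polyhedron with vanishing $\rH^2$ does not in general embed into a contractible $2$-dimensional polyhedron, and the only universally available alternative (the cone) raises the dimension by one. This is precisely what forces the dichotomy $\{2, 3\}$ when $d = 2$ and cannot be avoided by our method.
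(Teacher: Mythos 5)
Your argument is correct and follows essentially the same strategy as the paper: the lower bound comes from the contractibility of $\pan{\B}{J}\supseteq\flink{\B}{J}$, and the upper bound reduces to Lemma~\ref{lem:polyhedron}, with the dimension-two exception of that lemma producing the same $\{2,3\}$ dichotomy. The only difference is organizational: the paper fixes $k=\dim(\B)$ and inspects the element $J$ at which the dimension jump to $k$ first occurs to deduce what $d$ must be, whereas you run a downward induction over $\mathcal Q$ to bound every $\dim(\pan{\B}{J})$ by $d$ (or by $3$ when $d=2$); both hinge on exactly the same lemma and the same handling of the exceptional case.
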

\begin{proof}
First note that for any $J \in \mathcal Q$ the polyhedron $\flink{\B}{J}$ is contained in the contractible polyhedron $\pan{\B}{J}$. Thus if $\rH^{n-1}\big(\flink{\B}{J}) \neq 0$ for some $n>0$ then $\pan{\B}{J}$ necessarily has dimension at least $n$. This shows that $\mathrm{dim}(B) \geqslant d$. 

Now assume that $\mathrm{dim}(\B)=k \geqslant 0$ and 
let $J$ be the last element in the construction of $\B$ for which $k=\mathrm{dim}(\pan{\B}{J}) > \mathrm{dim}(\flink{\B}{J})$. In light of Lemma~\ref{lem:polyhedron}, this means that either $\rH^{k-1}(\flink{\B}{J}) \neq 0$ or $\mathrm{dim}(\flink{\B}{J})=2$. In the first case, we obtain that $k=d$. In the second case, we necessarily have $\rH^{1}(\flink{B}{T}) \neq 0$ for some $T$ which appears in the construction earlier than $J$, for otherwise $\flink{\B}{J}$ would not have dimension $2$ to begin with. Consequently, we get that $d \in \{2,3\}$ and $k=3$. \end{proof}

\begin{remark}\label{rem:2dimacyclic}In contrast to Lemma~\ref{lem:polyhedron}, if $L$ is a $2$--dimensional acyclic polyhedron with a finite, non-trivial fundamental group then $L$ cannot be embedded into a $2$--dimensional contractible polyhedron \cite[ Proposition~5]{BLN}. Thus in Proposition~\ref{prop:dimbest} we cannot avoid the possibility that $d=2$ and $\mathrm{dim}(B)=3$.\end{remark}

Now let $G(\mathcal Q)$ be a strictly developable simple complex of finite groups with the fundamental group $G$ and suppose that the standard development $D(\D,G(\mathcal Q))$ is a model for $\underline{E}G$.

In \cite{DMP}, Degrijse and Mart\'\i nez-P\'erez  give a formula for the Bredon cohomological dimension of $G$. We will now show how to simplify their formula, in order to compare it with \eqref{eq:dimbestv}.   
We need the following definitions. For a subset $\Omega \subseteq \mathcal{Q}$ define subspaces ${\D}_{\Omega}$ and ${\D}_{>\Omega}$ of $\D$ to be the following geometric realisations
\begin{align*}K_{\Omega}= \abs{ \{ V \in \mathcal Q \mid V \geqslant U \text{ for some } U \in \Omega\}},\\
K_{>\Omega}= \abs{ \{ V \in \mathcal Q \mid V > U \text{ for some } U \in \Omega\}}.
\end{align*}
Note that $K_{\{U\}}$ is the panel $\pan{K}{U}$ and $K_{> \{U\}}$ is the union $\flink{K}{U}$. We will abbreviate $K_{\{U\}}$ to $\pan{K}{U}$ and $K_{>\{U\}}$ to $K_{>U}$.\medskip

For $J \in \mathcal Q$, define the subset $\Omega_J \subseteq \mathcal Q$ as \[\Omega_J= \{U \in \mathcal Q \mid P_J=P_U\},\]
where $P_J$ and $P_U$ are the local groups corresponding to $J$ and $U$ respectively. By \cite[Theorem 5.1]{DMP}, one has
\begin{equation}\label{eq:dimdieter}\underline{\mathrm{cd}}(G)= \mathrm{max}\{ n \in \mathbb{N} \mid \rH^n(K_{\Omega_J},K_{>\Omega_J}) \neq 0 \text{ for some } J \in \mathcal Q\}.\end{equation} 

\begin{proposition}\label{prop:bredondim}Under the above assumptions, we have 
\begin{equation}\label{eq:dimdavis}\underline{\mathrm{cd}}(G)=\mathrm{max}\{n \in \mathbb{N} \mid \rH^{n-1}(K_{>J}) \neq 0 \text{ for some } J \in \mathcal Q \}.\end{equation}
\end{proposition}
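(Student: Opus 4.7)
The plan is to begin from the Degrijse--Mart\'\i nez-P\'erez formula \eqref{eq:dimdieter} and decompose each relative cohomology group $\rH^n(K_{\Omega_J},K_{>\Omega_J})$ as a direct sum indexed by $\Omega_J$, after which the equality \eqref{eq:dimdavis} follows from a straightforward comparison of maxima.

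The argument rests on two combinatorial observations. First, for any $U \in \mathcal{Q}$ the panel $K_U = |\mathcal{Q}_{\geqslant U}|$ is contractible, since $U$ is a minimum of $\mathcal{Q}_{\geqslant U}$. Second, any two distinct elements $U,V \in \Omega_J$ are incomparable in $\mathcal{Q}$: if $U < V$ then $\phi_{VU}\colon P_U \to P_V$ is injective but not surjective, whereas $i_V\circ\phi_{VU} = i_U$, the injectivity of $i_V$ (from strict developability), and the hypothesis $P_U = P_V$ inside $G$ together force $\phi_{VU}(P_U) = P_V$, a contradiction. Consequently, for distinct $U,V \in \Omega_J$, the minimum vertex of any simplex of $K_U \cap K_V$ must be strictly greater than both $U$ and $V$, so $K_U \cap K_V \subseteq K_{>U}$.

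Using these facts I would establish the wedge decomposition
\[
K_{\Omega_J}/K_{>\Omega_J} \;\cong\; \bigvee_{U \in \Omega_J} K_U/K_{>U}.
\]
Each simplex of $K_{\Omega_J}$ has a unique minimum vertex, and a direct check shows that it avoids $K_{>\Omega_J}$ precisely when that minimum lies in $\Omega_J$; furthermore, such a simplex with minimum $U$ lies in $K_U$ and, by the pairwise incomparability of $\Omega_J$, in no other $K_V$ with $V \in \Omega_J$. Combined with the intersection bound above, this shows that the natural map from the wedge into the quotient is a bijection on open cells away from the basepoint, hence a homeomorphism. Applying the long exact sequence of the pair $(K_U,K_{>U})$ to each wedge summand and invoking contractibility of $K_U$ then yields
\[
\rH^n(K_{\Omega_J},K_{>\Omega_J}) \;\cong\; \bigoplus_{U \in \Omega_J} \rH^{n-1}(K_{>U}) \qquad (n \geqslant 1),
\]
from which \eqref{eq:dimdavis} follows at once, since $U \in \Omega_U$ for every $U \in \mathcal{Q}$ while each $\Omega_J$ is a subset of $\mathcal{Q}$.

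The step I expect to require the most care is verifying that elements of $\Omega_J$ are pairwise incomparable, as it is here that the non-surjectivity clause of Definition~\ref{def:scofg}(\ref{it:scofg2}) genuinely enters; the subsequent wedge decomposition and long exact sequence manipulation are then routine, and the degenerate cases ($n=0$ or $K_{>U}=\emptyset$) do not affect the maxima.
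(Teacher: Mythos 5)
Your proof is correct, and the key combinatorial input---that distinct elements of $\Omega_J$ are pairwise incomparable, forced by strict developability together with the non-surjectivity clause of Definition~\ref{def:scofg}(\ref{it:scofg2})---is exactly the same fact the paper relies on. Where you diverge is in how you exploit it. The paper proves the direct-sum decomposition $\rH^n(K_{\Omega_J},K_{>\Omega_J}) \cong \bigoplus_{U \in \Omega_J} \rH^n(K_U,K_{>U})$ by induction on $\abs{\Omega_J}$, peeling off one $U$ at a time via a relative Mayer--Vietoris sequence; the key ingredient there is the set-theoretic identity $K_U \cap K_{\Omega_J \smallsetminus \{U\}} = K_{>U} \cap K_{>(\Omega_J \smallsetminus \{U\})}$, which encodes precisely your incomparability observation. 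Your route instead establishes the sharper, space-level statement $K_{\Omega_J}/K_{>\Omega_J} \cong \bigvee_{U\in\Omega_J} K_U/K_{>U}$ by a direct cell count: a simplex of $K_{\Omega_J}$ survives to the quotient exactly when its minimum vertex lies in $\Omega_J$, and incomparability guarantees it then belongs to exactly one $K_U$; a continuous bijection of finite (hence compact Hausdorff) CW complexes is a homeomorphism. This avoids the induction entirely and makes the decomposition transparent, at the cost of having to verify the cell-level bijection carefully, which you do. Both arguments then finish identically by contractibility of $K_U$ and the long exact sequence of the pair, and your final passage from the indexed family $\{\Omega_J\}$ back to all of $\mathcal Q$ via $U\in\Omega_U$ matches the paper's. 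Your remark that the degenerate cases do not affect the maxima is fine; the isomorphism $\rH^n(K_U,K_{>U})\cong\rH^{n-1}(K_{>U})$ holds for all $n$ with standard conventions for the empty set, and in any event the maximum is realised at some $n\geqslant 1$ unless $G$ is finite.
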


\begin{proof}We will show that for every $J \in \mathcal Q$ and for any integer $n\geqslant 0$ we have \begin{equation}\label{eq:dimensioncompare}\rH^n(K_{\Omega_J},K_{>\Omega_J})\cong \bigoplus_{U \in \Omega_J}\rH^n(K_{U},K_{>U}).\end{equation}
This will imply the proposition, since for any $U \in \mathcal Q$, the space $K_{U}$ is contractible and thus $\rH^n(K_{U},K_{>U}) \cong \rH^{n-1}(K_{>U})$. To show \eqref{eq:dimensioncompare}, we proceed by induction on the number of elements in $\Omega_J$. If $\Omega_J$ contains only one element then \eqref{eq:dimensioncompare} is clearly satisfied. Assume now that $\Omega_J$ contains more than one element. Let $U \in \Omega_J$, let $\omminu= \Omega_J \smallsetminus \{U\}$ and write the pair $(K_{\Omega_J}, K_{> \Omega_J})$ as
\[(K_{\Omega_J}, K_{> \Omega_J})= (K_{\omminu} \cup K_{U}, K_{> \omminu} \cup K_{>U}).\]
The relative Mayer-Vietoris sequence reads
\begin{align*} \rH^{n-1}(K_{U}  \cap K_{ \omminu},K_{> U} \cap K_{> \omminu} )  \to & \\ 
 \to \rH^n(K_{\Omega_J}, K_{> \Omega_J})\to \rH^n  (K_{U}&, K_{> U}) \oplus  \rH^n(K_{\omminu}, K_{> \omminu})  \to \\
 & \to \rH^{n}(K_{U} \cap K_{\omminu},K_{> U} \cap K_{> \omminu} ).\end{align*} 
\begin{claim} We have $K_{U}  \cap K_{\omminu} =K_{> U}  \cap K_{> \omminu}$.\end{claim}
To prove the claim consider an element $V \in K_{U}  \cap K_{\omminu}$. Thus $U \leqslant V$ and $U' \leqslant V$ for some $U' \in \omminu$. Suppose that $U=V$. In this case the map $P_{U'} \to P_U$ is an isomorphism since it is injective and $P_{U'}=P_{U}$ by definition of $\Omega_J$. This contradicts the assumption that no homomorphism between local groups is surjective (see Definition~\ref{def:scofg}(\ref{it:scofg2})). For the same reason we cannot have $V=U'$ for any $U' \in \omminu$. This finishes the proof of the claim.
 
 The claim implies that $\rH^{n}(K_{U}  \cap K_{\omminu},K_{> U} \cap K_{> \omminu} )=0$ for every $n\geqslant 0$ and therefore the map
\[\rH^n(K_{\Omega_J}, K_{> \Omega_J})\to \rH^n  (K_{U}, K_{> U}) \oplus  \rH^n(K_{\omminu}, K_{> \omminu})\] 
is an isomorphism. Since by the inductive assumption we have $\rH^n(K_{\omminu},K_{>\omminu})\cong \bigoplus_{U' \in \omminu}\rH^n(K_{U'},K_{>U'})$, the formula \eqref{eq:dimensioncompare} is established.
\end{proof}

We are ready now to prove the main theorem.

\begin{theorem}\label{thm:main} Let $G(\mathcal Q)$ be a strictly developable simple complex of finite groups over a poset $\mathcal{Q}$ with the fundamental group $G$. Then 
\begin{enumerate} 
\item \label{it:main1} the standard development $D(\D, G(\mathcal{Q}))$ and the Bestvina complex $D(\B, G(\mathcal{Q}))$ are $G$--homotopy equivalent,
\item \label{it:main2} if $D(\D, G(\mathcal{Q}))$ is  a model for $\underline{E}G$ then $D(\B, G(\mathcal{Q}))$ is a cocompact model for $\underline{E}G$ 
satisfying
\[\mathrm{dim}(D(\B, G(\mathcal{Q}))) = \left\{ \begin{array}{lcr}
      \underline{\mathrm{cd}}(G) 			  & \text{ if } & \underline{\mathrm{cd}}(G) \neq 2,\\
      2 \text{ or } 3 & \text{ if } & \underline{\mathrm{cd}}(G) =2. \\

      \end{array} \right.\]
\end{enumerate}
\end{theorem}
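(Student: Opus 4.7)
The plan is to deduce part (1) directly from Lemmas~\ref{lem:panelhtpy} and \ref{lem:ghtpyeq}, and to deduce part (2) by combining part (1) with Propositions~\ref{prop:dimbest} and \ref{prop:bredondim} via a link-comparison argument obtained by reapplying Lemma~\ref{lem:panelhtpy} to the subposets $\mathcal{Q}_{>J}$.

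For part (1), I would first check that both $\D$ and $\B$ have contractible panels. Each $\pan{\D}{J}=|\mathcal{Q}_{\geqslant J}|$ is the geometric realisation of a poset with minimum element $J$, hence a cone and therefore contractible; each $\pan{\B}{J}$ is contractible by construction in Definition~\ref{def:bestcx}. Lemma~\ref{lem:panelhtpy} then provides panel maps $\D\to \B$ and $\B\to \D$ whose compositions are panel homotopic to the respective identities, and Lemma~\ref{lem:ghtpyeq} promotes these to a $G$--homotopy equivalence $D(\D,G(\mathcal{Q})) \simeq_G D(\B, G(\mathcal{Q}))$.

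For part (2), since being a model for $\underline{E}G$ is invariant under $G$--homotopy equivalence, $D(\B,G(\mathcal{Q}))$ inherits this property from $D(\D,G(\mathcal{Q}))$ by part (1); cocompactness is automatic because $\B$ is compact and serves as a strict fundamental domain, as noted after Definition~\ref{def:basic}. To compute $\dim(\B)$, I would use Proposition~\ref{prop:dimbest}, which expresses this dimension in terms of the integer $d=\max\{n:\rH^{n-1}(\flink{\B}{J})\neq 0\text{ for some }J\in\mathcal{Q}\}$, and Proposition~\ref{prop:bredondim}, which computes $\ucd G$ by the analogous formula using $\flink{\D}{J}=K_{>J}$. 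To identify the two maxima, I would show that for each fixed $J\in\mathcal{Q}$ the subspaces $\flink{\B}{J}$ and $\flink{\D}{J}$, equipped with the restricted families $\{\pan{\B}{J'}\}_{J<J'}$ and $\{\pan{\D}{J'}\}_{J<J'}$, are themselves panel complexes over the subposet $\mathcal{Q}_{>J}$ with contractible panels. A second application of Lemma~\ref{lem:panelhtpy} to this restricted situation yields a homotopy equivalence $\flink{\B}{J}\simeq\flink{\D}{J}$, hence an isomorphism $\rH^{*}(\flink{\B}{J}) \cong \rH^{*}(\flink{\D}{J})$ for every $J$, and therefore $d = \ucd G$. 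Feeding this equality back into Proposition~\ref{prop:dimbest} yields the stated dimension formula.

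The main subtlety I expect is verifying that the restricted data $(\flink{\B}{J}, \{\pan{\B}{J'}\}_{J'>J})$ genuinely satisfies the panel-intersection axiom of Definition~\ref{def:panelcx}. For the standard complex $\D$ this is transparent, since $\pan{\D}{J'}\cap\pan{\D}{J''}=|\mathcal{Q}_{\geqslant\{J',J''\}}|$ is a union of panels indexed by the common upper bounds of $\{J',J''\}$, all of which lie in $\mathcal{Q}_{>J}$. For $\B$ the same intersection pattern has to be enforced during the inductive construction of Definition~\ref{def:bestcx} (otherwise an extra identification could erase or create cohomology in the link), and this is the step I would treat most carefully before invoking Lemma~\ref{lem:panelhtpy} at the level of the subposet.
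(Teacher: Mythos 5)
Your proof is correct and follows essentially the same route as the paper's: contractible panels together with Lemmas~\ref{lem:panelhtpy} and \ref{lem:ghtpyeq} give part~(\ref{it:main1}), and for part~(\ref{it:main2}) one restricts the panel structure to the subposet $\mathcal{Q}_{>J}$ and reapplies Lemma~\ref{lem:panelhtpy} to identify $\rH^{*}(\flink{\B}{J})$ with $\rH^{*}(\flink{\D}{J})$, matching the formulas of Propositions~\ref{prop:dimbest} and~\ref{prop:bredondim}. On your closing concern: the paper indeed just asserts that $\flink{\B}{J}$ is a panel complex over $\mathcal{Q}_{>J}$; the intended reading of Definition~\ref{def:bestcx} is that at each inductive step the material added to build $\pan{\B}{J}$ meets the previously constructed complex only along $\flink{\B}{J}$, so two panels intersect exactly in the union of panels indexed by their common upper bounds and the intersection axiom for the restricted structure is automatic.
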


\begin{proof}
(\ref{it:main1}). By definition both $\D$ and $\B$ have contractible panels and thus by Lemma~\ref{lem:panelhtpy} they are panel homotopy equivalent. Lemma~\ref{lem:ghtpyeq} implies then that $D(\D, G(\mathcal{Q}))$ and $D(\B, G(\mathcal{Q}))$ are $G$--homotopy equivalent.

(\ref{it:main2}). By (\ref{it:main1}) we have that $D(\D, G(\mathcal{Q}))$ is a model for $\underline{E}G$ if and only if $D(\B, G(\mathcal{Q}))$ is so. Since the quotient $D(\B, G(\mathcal{Q}))/G \cong \B$ is compact, we get that $D(\B, G(\mathcal{Q}))$ is a cocompact model for $\underline{E}G$.
It remains to show that $\mathrm{dim}(D(B, G(\mathcal{Q}))) =  \underline{\mathrm{cd}}(G)$ if $\underline{\mathrm{cd}}(G)  \neq 2 $ and that
$\mathrm{dim}(D(\B, G(\mathcal{Q}))) \in  \{2,3\}$ if $\underline{\mathrm{cd}}(G)=2$.

First note that clearly $\mathrm{dim}(D(\B, G(\mathcal{Q}))) =\mathrm{dim}(\B)$. Thus in light of Propositions~\ref{prop:dimbest} and \ref{prop:bredondim} it suffices to show that formulas \eqref{eq:dimbestv} and \eqref{eq:dimdavis} agree. We will show that for any $J \in Q$ and any $n > 0$ we have 
\[\rH^{n}\big(\flink{\B}{J}\big) \cong \rH^{n}\big(\flink{\D}{J}\big)\]
Recall that $\D_{>J} = \flink{\D}{J}$ (see the discussion before Proposition~\ref{prop:bredondim}). Note that both $\flink{\D}{J}$ and $\flink{\B}{J}$ may be seen as panel complexes over the poset $\mathcal Q_{>J}$, by restricting the panel structure from $\D$ and $\B$ respectively. By Lemma~\ref{lem:panelhtpy} we conclude that $\flink{\D}{J}$ and $\flink{B}{J}$ are (panel) homotopy equivalent. In particular, their cohomology groups are isomorphic.
\end{proof}

\begin{remark}Note that in the above theorem we do not assume that $\D$ (or $\B$) is contractible. However, if $D(\D, G(\mathcal{Q}))$ is contractible (which is the case for example when it is a model for $\underline{E}G$) then $\D$ is contractible as well, since it is a retract of $D(\D, G(\mathcal{Q}))$ (see the discussion after Definition~\ref{def:basic}).
\end{remark}

\section{Equivariant deformation retraction}\label{sec:eqdefret}

In this section we discuss when it is possible to obtain the basic construction $D(\B, G({\mathcal Q}))$ as an equivariant deformation retract of $D(\D, G({\mathcal Q}))$. We isolate a concrete condition which ensures that this is the case. 
We show that this condition is always satisfied if the Bredon cohomological dimension of the fundamental group $G$ is at most one.\medskip

Let $(X, \{\pan{X}{J}\}_{J \in \mathcal Q})$ and $(Y, \{\pan{Y}{J}\}_{J \in \mathcal Q})$ be panel complexes over $\mathcal Q$. A \emph{panel inclusion} $ i \colon X \hookrightarrow Y$ is an injective panel map. A \emph{panel deformation retraction} $ r \colon Y \to X$ is panel map such that $r\circ i$ is panel homotopic to $\mathrm{id}_{\D}$ rel $i(X)$.

The following lemma is similar to Lemma~\ref{lem:panelhtpy}.

\begin{lemma}\label{lem:paneldr}
Suppose $(X, \{\pan{X}{J}\}_{J \in \mathcal Q})$ and $(Y, \{\pan{Y}{J}\}_{J \in \mathcal Q})$ are panel complexes over $\mathcal Q$ and let $i \colon X \hookrightarrow Y$ be a map which is a panel inclusion and such that for every $J \in \mathcal Q$ the restriction $i_J \colon \pan{X}{J} \hookrightarrow \pan{Y}{J}$ is a homotopy equivalence. Then there is a panel deformation retraction $r \colon Y \to X$. 
\end{lemma}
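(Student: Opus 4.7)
The plan is to follow the inductive scheme of Lemma~\ref{lem:panelhtpy}, but simultaneously build a retraction and its deformation homotopy. More precisely, I would construct, by downward induction on $\mathcal Q$, panel retractions $r_J \colon \pan{Y}{J} \to \pan{X}{J}$ and panel homotopies $H_J \colon \pan{Y}{J} \times I \to \pan{Y}{J}$ from $\mathrm{id}$ to $i_J \circ r_J$ rel $i_J(\pan{X}{J})$, arranged so that whenever $J < J'$ the restrictions of $r_J$ and $H_J$ to $\pan{Y}{J'}$ and $\pan{Y}{J'}\times I$ agree with $r_{J'}$ and $H_{J'}$. The desired panel deformation retraction is then obtained by taking the unions over $J \in \mathcal Q$.

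The base case ($J$ maximal, $\link{Y}{J} = \emptyset$) reduces to Whitehead's theorem: $i_J$ is a cofibration and a homotopy equivalence of polyhedra, so $\pan{X}{J}$ is a deformation retract of $\pan{Y}{J}$. For the inductive step, after gluing the existing $r_{J'}, H_{J'}$ into $r_{>J}\colon \link{Y}{J} \to \link{X}{J}$ and $H_{>J}$, I would define $\phi\colon \pan{X}{J} \cup \link{Y}{J} \to \pan{X}{J}$ to be the identity on $\pan{X}{J}$ and $r_{>J}$ on $\link{Y}{J}$ (these agree on $\link{X}{J}$ by the retraction property), and define $\psi$ on the subcomplex $W_J = \pan{Y}{J}\times\partial I \cup (\pan{X}{J}\cup\link{Y}{J})\times I$ of $\pan{Y}{J}\times I$ by the tautological data: $\mathrm{id}$ at $t=0$, $i_J\circ r_J$ at $t=1$, constant on $\pan{X}{J}\times I$, and $H_{>J}$ on $\link{Y}{J}\times I$. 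The task reduces to extending $\phi$ across $\pan{Y}{J}$ and $\psi$ across $\pan{Y}{J}\times I$.

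Both extensions hinge on a single geometric input: the inclusion $\pan{X}{J} \cup \link{Y}{J} \hookrightarrow \pan{Y}{J}$ is a cofibration and a homotopy equivalence. The cofibration part is automatic from the polyhedral structure. For the homotopy equivalence I would apply the gluing lemma to the pushout $\pan{X}{J} \cup_{\link{X}{J}} \link{Y}{J}$: by the inductive hypothesis $\link{X}{J} \hookrightarrow \link{Y}{J}$ has $r_{>J}$ as a homotopy inverse, hence is a homotopy equivalence; this forces the pushed-out inclusion $\pan{X}{J} \hookrightarrow \pan{X}{J} \cup \link{Y}{J}$ to be a homotopy equivalence as well, and composing with the inclusion into $\pan{Y}{J}$ recovers $i_J$, which is a homotopy equivalence by hypothesis. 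Two-out-of-three finishes it. Whitehead's theorem then promotes $\pan{X}{J} \cup \link{Y}{J}$ to a deformation retract of $\pan{Y}{J}$, so $\phi$ extends (by precomposition with the retraction) to the desired $r_J$. The analogous product-level statement — a standard consequence of the previous one — makes $W_J$ a deformation retract of $\pan{Y}{J}\times I$, and $\psi$ extends to $H_J$; crucially, because any retraction is the identity on its image, $H_J$ will restrict to $H_{>J}$ on $\link{Y}{J}\times I$ as required for compatibility.

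The main obstacle I anticipate is checking that the extensions remain panel, namely $r_J(\pan{Y}{J'}) \subseteq \pan{X}{J'}$ and $H_J(\pan{Y}{J'}\times I) \subseteq \pan{Y}{J'}$ for every $J' > J$. This should reduce to the observation that over $\link{Y}{J}$ (resp.\ $\link{Y}{J}\times I$) the new data reproduces $r_{>J}$ (resp.\ $H_{>J}$) verbatim, and these are panel by the inductive hypothesis.
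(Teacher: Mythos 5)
Your proposal is correct and follows essentially the same scheme as the paper's proof: descending induction on the poset, with the key geometric input being that $\pan{X}{J}\cup\flink{Y}{J}$ is a cofibred homotopy equivalence in $\pan{Y}{J}$ (the paper realizes this via the pushout $A=\pan{X}{J}\cup_{\flink{X}{J}}\flink{Y}{J}$ and \cite[Corollary~0.20]{Hat}; your gluing-lemma plus two-out-of-three derivation simply fills in the ``one easily verifies'' step). The one place you are a bit more careful than the paper is the treatment of the deformation homotopy: by building $H_J$ as an extension of explicit boundary data over $\pan{Y}{J}\times I$ you guarantee that $H_J$ restricts \emph{verbatim} to $H_{>J}$ on $\flink{Y}{J}\times I$, whereas the paper's $r_J = p_A\circ p_{Y_J}$ naturally produces a concatenated homotopy whose restriction to $\flink{Y}{J}\times I$ is only a reparametrization of the inductive one, a small wrinkle that your formulation avoids.
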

\begin{proof}The proof follows the same idea as the proof of Lemma~\ref{lem:panelhtpy}. For $J \in \mathcal{Q}$ 
let $r_J$ denote the restriction of the putative map $r$ to the panel $ \pan{Y}{J}$. We will construct the maps $r_J$ inductively and then set $r= \cup_{J \in \mathcal Q} r_J$.  

For every maximal element $J$ we have an inclusion $i_J \colon \pan{X}{J} \hookrightarrow  \pan{Y}{J}$ which is a homotopy equivalence. Then by \cite[Corollary 0.19]{Hat} there exists a deformation retraction $r_J \colon \pan{Y}{J} \to \pan{X}{J}$. For the inductive step, suppose that $J \in \mathcal Q$ is such that for any $J'$ with $J < J'$ the map $r_{J'}$ has been defined and that we have a panel inclusion 
\[\cup_{J\leqslant J'}  i_{J'} \colon \flink{X}{J} \hookrightarrow \flink{Y}{J}\] and a panel deformation retraction
\[\cup_{J\leqslant J'}  r_{J'} \colon \flink{Y}{J} \to \flink{X}{J}.\]
We will construct a deformation retraction $r_J \colon \pan{Y}{J} \to \pan{X}{J}$ such that the following diagram commutes
\[
\begin{tikzcd}[column sep=5em, row sep =4em]
 \flink{X}{J} \arrow[r, hook, shift left=-0.75ex, "\cup_{J < J'} \ i_{J'}"' ]
  \arrow[hookrightarrow]{d} & \flink{Y}{J}  \arrow[l,  shift right=0.75ex, "\cup_{J < J'} \ r_{J'}"' ]   \arrow[hookrightarrow]{d} \\
         \pan{X}{J}  \arrow[r, hook, shift left=-0.75ex, "i_J"' ] &  \pan{Y}{J}  \arrow[l, dashed,  shift right=0.75ex, "r_J"' ]
\end{tikzcd}
\]
and such that the homotopy between $r_J\circ i_J$ and $\mathrm{id}_{\pan{Y}{J}}$ restricts to the homotopy between $(\cup_{J < J'}  r_{J'})\circ(\cup_{J < J'}  i_{J'})$ and $\mathrm{id}_{(\flink{Y}{J})}$.

To construct $r_J$, first consider the pushout \[A =\pan{X}{J}\cup_{(\flink{X}{J})}(\flink{Y}{J}).\] Note that there is a deformation retraction $ p_A \colon A \to \pan{X}{J}$ given by performing the identity on $X_J$ and $\cup_{J < J'}  r_{J'}$ on $\flink{Y}{J}$. 
Then consider the inclusion $A \hookrightarrow \pan{Y}{J}$. One easily verifies that this inclusion is a homotopy equivalence. Thus by \cite[Corollary~0.20]{Hat} we get a deformation retraction $ p_{Y_J} \colon Y_J \to A$. Define $r_J =p_{A}\circ p_{Y_J}$. It is straightforward to check that $r_J$ has all the claimed properties. This finishes the inductive step. Now setting $r= \cup_{J \in \mathcal Q} r_J$ gives the desired panel deformation retraction.
\end{proof}

\begin{lemma}\label{lem:groupdr}Suppose that we have a panel inclusion $i \colon X \hookrightarrow Y$ and a panel deformation retraction $r \colon Y \to X$. Then there is a $G$--equivariant inclusion \[\tilde{i} \colon D(X, G({\mathcal Q})) \hookrightarrow D(Y, G({\mathcal Q}))\] and a $G$--equivariant deformation retraction \[\tilde{r} \colon D(Y, G({\mathcal Q})) \to D(X, G({\mathcal Q})).\] 
\end{lemma}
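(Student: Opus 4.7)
The natural candidates for the equivariant maps are the formulas
$$\tilde{\imath}([g,x]) = [g, i(x)], \qquad \tilde{r}([g,y]) = [g, r(y)],$$
exactly as in the proof of Lemma~\ref{lem:ghtpyeq}. The plan is to check well-definedness, equivariance, the retraction identity, and then lift the panel homotopy to a $G$-equivariant one. First I would verify well-definedness: suppose $(g_1,x) \sim (g_2,x)$ in $G\times X$, i.e.\ $g_1^{-1}g_2 \in P_{J(x)}$. Since $i$ is a panel map, every panel $\pan{X}{J}$ containing $x$ maps into the corresponding $\pan{Y}{J}$, so $J(x) \leqslant J(i(x))$ in $\mathcal Q$. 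The composition of the structural inclusions $\phi_{TJ}$ from Definition~\ref{def:scofg} then gives $P_{J(x)} \subseteq P_{J(i(x))}$, so $(g_1,i(x)) \sim (g_2,i(x))$ in $G\times Y$, and $\tilde{\imath}$ descends to $D(X,G(\mathcal Q))$. The identical argument applied to the panel map $r$ handles $\tilde r$. Equivariance is immediate from $\tilde{\imath}(h\cdot[g,x]) = [hg,i(x)] = h\cdot\tilde{\imath}([g,x])$, and similarly for $\tilde r$.

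Because $r$ is a panel deformation retraction, we have $r\circ i = \mathrm{id}_X$ (on the nose), which yields $\tilde r\circ\tilde{\imath} = \mathrm{id}_{D(X,G(\mathcal Q))}$. In particular $\tilde{\imath}$ is an injective $G$-map. To obtain a $G$-equivariant homotopy between $\tilde{\imath}\circ\tilde r$ and $\mathrm{id}_{D(Y,G(\mathcal Q))}$ rel $\tilde{\imath}(D(X,G(\mathcal Q)))$, let $H\colon Y\times I \to Y$ be the panel homotopy between $i\circ r$ and $\mathrm{id}_Y$ given by the definition of panel deformation retraction, and define
$$\tilde H \colon D(Y,G(\mathcal Q))\times I \to D(Y,G(\mathcal Q)), \qquad \tilde H([g,y],t) = [g,H(y,t)].$$
For each fixed $t\in I$, the slice $H(-,t)$ is a panel map by definition of panel homotopy, so the well-definedness argument of the previous paragraph applies verbatim to show that $\tilde H$ descends to the basic construction. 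Equivariance is immediate, the endpoint conditions transfer from $H$ to $\tilde H$, and the stationarity of $H$ on $i(X)$ translates directly to stationarity of $\tilde H$ on $\tilde{\imath}(D(X,G(\mathcal Q)))$, completing the proof.

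The main technical point is the well-definedness of $\tilde{\imath}$, $\tilde r$ and $\tilde H$ on equivalence classes, which reduces cleanly to comparing the stratification labels $J(x)$ and $J(i(x))$ (respectively for $r$ and each time-slice of $H$) in conjunction with the compatibility of the structural subgroup inclusions $P_J \hookrightarrow P_T$ for $J\leqslant T$. Everything else is formal, and the argument mirrors the structure of Lemma~\ref{lem:ghtpyeq} with the additional rel-condition carried along.
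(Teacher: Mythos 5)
Your proof is correct and takes the same approach as the paper, which for this lemma simply refers back to the proof of Lemma~\ref{lem:ghtpyeq}: the maps on the basic constructions are induced coordinatewise by $[g,x]\mapsto[g,i(x)]$, $[g,y]\mapsto[g,r(y)]$, and $([g,y],t)\mapsto[g,H(y,t)]$, with the rel~$i(X)$ condition carried through. You additionally spell out the well-definedness on equivalence classes (a step the paper dispatches with ``one easily checks''), observing that a panel map sends the panels through $x$ into those through its image, which gives $P_{J(x)}\subseteq P_{J(i(x))}$ and hence compatibility with $\sim$.
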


\begin{proof}The proof is the same as the proof of Lemma~\ref{lem:ghtpyeq}.
\end{proof}

We will now state the condition which will ensure the existence of an equivariant deformation retraction $D(\D, G(\mathcal{Q})) \to D(\B, G(\mathcal{Q}))$. We do not know whether it is always satisfied. 


\begin{definition}\label{cond:1}Let $L$ be a compact polyhedron of dimension $n \neq 2$. We say that $L$ is {\it subconical} if either $\rH^n(L)\ne 0$ or  when $\rH^n(L)=0$, then $L$ embeds into an $n$--dimensional contractible compact polyhedron $L'$, such that $L'$ is a subpolyhedron of the (possibly subdivided) cone $C(L)$ and the composite $L \hookrightarrow L' \hookrightarrow C(L)$ is equal to the canonical inclusion $L \hookrightarrow C(L)$.
\end{definition}

\begin{ques}\label{ques_cone} Is every compact polyhedron of dimension greater than two subconical?
\end{ques}

\begin{remark}Note that in Lemma~\ref{lem:polyhedron}, the construction of a contractible polyhedron containing $L$  begins with attaching the cone $C(L^{(n-2)})$ to $L$.  Next, one has to attach $n$--cells in order to kill the generators of $\pi_{n-1}(L \cup C(L^{(n-2)})) \cong \mathbb{Z}^k$. We do not know whether this step can be performed so that the resulting polyhedron becomes subconical. 
\end{remark}



\begin{lemma}\label{lem:conditionsatisf}Suppose that in the construction of Bestvina complex, at every step the polyhedron $\flink{\B}{J}$ is subconical. Then there is a panel inclusion $\B \hookrightarrow \D$ such that for any $J \in \mathcal Q$ the restriction $i_J \colon \pan{\B}{J} \hookrightarrow \pan{\D}{J}$ is a homotopy equivalence.
\end{lemma}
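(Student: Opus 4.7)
The plan is to proceed by downward induction on the poset $\mathcal Q$, building for each $J\in\mathcal Q$ a panel inclusion $i_J\colon \pan{\B}{J}\hookrightarrow \pan{\D}{J}$ which is compatible, in the sense of restricting on every subpanel $\pan{\B}{J'}\subset\pan{\B}{J}$ (with $J'>J$) to the inclusion $i_{J'}$ already constructed. Once the panel inclusion is in place, the homotopy equivalence part of the statement will be automatic: both $\pan{\B}{J}$ and $\pan{\D}{J}$ are contractible compact polyhedra by construction, and any inclusion between contractible CW complexes is a weak equivalence, hence a homotopy equivalence by Whitehead's theorem.

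The base case is trivial: for a maximal $J$ both panels are points, and $i_J$ is the unique map between them. For the inductive step, suppose $i_{J'}$ has been defined for every $J'>J$ in a compatible manner. These maps glue into a single panel inclusion
\[
i_{>J}\colon \flink{\B}{J}\hookrightarrow \flink{\D}{J},
\]
which extends canonically to an inclusion of cones
\[
C(i_{>J})\colon C\bigl(\flink{\B}{J}\bigr)\hookrightarrow C\bigl(\flink{\D}{J}\bigr)=\pan{\D}{J}.
\]
Now I invoke the subconicality hypothesis at the step of defining $\pan{\B}{J}$. If $\rH^n(\flink{\B}{J})=0$ where $n=\dim\flink{\B}{J}$, subconicality lets me realise $\pan{\B}{J}$ as a subpolyhedron of $C(\flink{\B}{J})$ such that $\flink{\B}{J}\hookrightarrow \pan{\B}{J}\hookrightarrow C(\flink{\B}{J})$ is the canonical inclusion; in the remaining cases (higher cohomology non-vanishing or the exceptional $2$-dimensional case) I simply take $\pan{\B}{J}=C(\flink{\B}{J})$, which is allowed by the construction recipe of Lemma~\ref{lem:polyhedron} and Proposition~\ref{prop:dimbest}. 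In either case, define
\[
i_J \colon \pan{\B}{J}\hookrightarrow C\bigl(\flink{\B}{J}\bigr)\xrightarrow{\,C(i_{>J})\,} \pan{\D}{J}.
\]

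Two things then need to be verified. First, that $i_J$ respects the panel structure: for any $J'>J$ and any $x\in\pan{\B}{J'}\subset\flink{\B}{J}$, the composition sends $x$ via the identity into $\flink{\B}{J}\subset C(\flink{\B}{J})$ and then via $C(i_{>J})$ (which is the identity on the base $\flink{\B}{J}$) to $i_{>J}(x)=i_{J'}(x)\in \pan{\D}{J'}$. So the restriction of $i_J$ to $\pan{\B}{J'}$ agrees with $i_{J'}$, and in particular lands in $\pan{\D}{J'}$. Second, that $i_J$ is a homotopy equivalence: this is automatic from contractibility of source and target, as noted above.

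The only delicate point, and what I expect to be the main obstacle, is matching the canonical inclusion of $\flink{\B}{J}$ into the subconical polyhedron with the inclusion into the cone: this is exactly the compatibility condition hard-wired into Definition~\ref{cond:1}, so no extra work is required. Setting $i=\bigcup_{J\in\mathcal Q}i_J$ completes the induction and produces the panel inclusion $\B\hookrightarrow\D$ whose restriction to each panel is a homotopy equivalence.
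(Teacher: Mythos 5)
Your proof is correct and follows the same approach as the paper: downward induction on $\mathcal Q$, coning off the already-constructed inclusion $\flink{\B}{J}\hookrightarrow\flink{\D}{J}$, using subconicality to factor $\flink{\B}{J}\hookrightarrow\pan{\B}{J}\hookrightarrow C(\flink{\B}{J})$, and invoking contractibility of both panels for the homotopy equivalence. Your explicit treatment of the case $\rH^n\neq 0$ (where subconicality is vacuous and one simply takes $\pan{\B}{J}=C(\flink{\B}{J})$) makes precise a point the paper's proof leaves implicit in the phrase ``we can choose $B_J$ to be the subcone.''
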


\begin{proof}For any maximal element $J \in \mathcal Q$ both panels $\pan{\B}{J}$ and $\pan{\D}{J}$ are equal to the point, so we set $i_J \colon \pan{\B}{J} \hookrightarrow \pan{\D}{J}$ to be the identity map. Now given $J \in \mathcal Q$, assume inductively that we have an inclusion  \[\cup_{J < J'}  i_{J'} \colon \flink{\B}{J} \hookrightarrow \flink{\D}{J}.\] Since $\flink{\B}{J}$ is subconical, we can choose $B_J$ to be the subcone of $C(\flink{\B}{J})$. Define $i_J$ to be the composition \[\pan{\B}{J} \hookrightarrow C(\flink{\B}{J}) \hookrightarrow C(\flink{\D}{J}) =\pan{\D}{J},\] where the latter map is the cone on $\cup_{J < J'}  i_{J'}$. By construction $i_J$ restricts to $\cup_{J < J'}  i_{J'}$ over $\flink{\B}{J}$. Finally, since both $\pan{\B}{J} $ and $\pan{\D}{J}$ are contractible, we conclude that $i_J$ is a homotopy equivalence.
\end{proof}



By Lemmas~\ref{lem:paneldr}, \ref{lem:groupdr} and \ref{lem:conditionsatisf}, we obtain the following.

\begin{proposition}\label{prop:defretract}Assume that for any $J \in Q$ in the construction of Bestvina complex, the subpolyhedron $\flink{\B}{J}$ is subconical. Then 
$D(\B, G(\mathcal{Q}))$ is a $G$--equivariant deformation retract of 
$D(\D, G(\mathcal{Q}))$.
\end{proposition}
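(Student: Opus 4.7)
The proof is essentially a concatenation of the three preceding lemmas, so my plan is to chain them in the correct order and check that their hypotheses line up.

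First, I would invoke Lemma~\ref{lem:conditionsatisf} with the subconical assumption. This produces a panel inclusion $i \colon \B \hookrightarrow \D$ constructed inductively on the poset $\mathcal{Q}$: for each $J$, one uses the subconical structure of $\flink{\B}{J}$ to embed $\pan{\B}{J}$ into the cone $C(\flink{\D}{J}) = \pan{\D}{J}$ compatibly with the previously constructed inclusions on $\flink{\B}{J}$. The key extra feature is that each restriction $i_J \colon \pan{\B}{J} \hookrightarrow \pan{\D}{J}$ is a homotopy equivalence, since both panels are contractible.

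Next, I would feed this panel inclusion into Lemma~\ref{lem:paneldr}, which yields a panel deformation retraction $r \colon \D \to \B$. The hypotheses match precisely: $\D$ and $\B$ are panel complexes over the same poset $\mathcal{Q}$, $i$ is a panel inclusion, and each $i_J$ is a homotopy equivalence.

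Finally, I would apply Lemma~\ref{lem:groupdr} to the pair $(i, r)$ to promote them to $G$--equivariant maps
\[
\tilde{i} \colon D(\B, G(\mathcal{Q})) \hookrightarrow D(\D, G(\mathcal{Q})), \qquad \tilde{r} \colon D(\D, G(\mathcal{Q})) \to D(\B, G(\mathcal{Q})),
\]
exhibiting $D(\B, G(\mathcal{Q}))$ as a $G$--equivariant deformation retract of $D(\D, G(\mathcal{Q}))$. There is no real obstacle here, since the conceptual and technical work is absorbed into the three lemmas. If anything, the mildly subtle point is verifying that the panel homotopy between $\tilde{r}\circ\tilde{i}$ and the identity descends to a genuine $G$--homotopy on the basic construction; but this is exactly what the formula $\widetilde{H}([g,x],t) = [g, H(x,t)]$ used in the proof of Lemma~\ref{lem:ghtpyeq} (and reused in Lemma~\ref{lem:groupdr}) does for us, because a panel homotopy rel $i(\B)$ respects the equivalence relation defining $D(-, G(\mathcal{Q}))$.
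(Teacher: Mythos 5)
Your proposal is exactly the paper's argument: the proposition is stated there as an immediate consequence of Lemmas~\ref{lem:conditionsatisf}, \ref{lem:paneldr} and \ref{lem:groupdr}, chained in precisely the order you describe. The hypotheses do line up as you check, so nothing further is needed.
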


We conclude this section with an application of the above proposition in the case of groups of Bredon cohomological dimension at most one. 

\begin{theorem}\label{thm:virtfree}
Let $G(\mathcal Q)$ be a strictly developable simple complex of finite groups over the poset $\mathcal{Q}$ with the fundamental group $G$. Suppose that $D(\D, G(\mathcal{Q}))$ is  a model for $\underline{E}G$ and that $\ucd  G\leqslant 1$. Then $D(\D, G(\mathcal{Q}))$ equivariantly deformation retracts onto the tree $D(\B, G(\mathcal{Q}))$.
\end{theorem}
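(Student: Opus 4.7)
The plan is to apply Proposition~\ref{prop:defretract}, which reduces the theorem to verifying that at every stage of the inductive construction of the Bestvina complex, the polyhedron $\flink{B}{J}$ is subconical in the sense of Definition~\ref{cond:1}. Once this is in place, the proposition delivers the desired equivariant deformation retraction, and the only remaining task is to observe that the target is a tree.

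First I would combine the bound $\ucd G\leqslant 1$ with Proposition~\ref{prop:bredondim} to conclude that $\widetilde{H}^i(K_{>J})=0$ for every $i\geqslant 1$ and every $J\in\mathcal Q$. The panel homotopy equivalence $K\simeq B$ from Lemma~\ref{lem:panelhtpy} restricts (since panel maps preserve unions of panels) to a homotopy equivalence $\flink{B}{J}\simeq\flink{K}{J}$, so $\widetilde{H}^i(\flink{B}{J})=0$ for all $i\geqslant 1$ as well. Proposition~\ref{prop:dimbest} then forces $\dim B = d\leqslant 1$, and in particular every $\flink{B}{J}$ arising during the inductive construction is a compact polyhedron of dimension at most one whose only possibly nontrivial reduced cohomology is in degree zero.

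The second step is to verify subconicality by hand in these low dimensions. If $\dim \flink{B}{J}=0$, then $\flink{B}{J}$ is a finite set of points; either it has at least two points and so $\widetilde{H}^0\neq 0$, or it is a single point, in which case it is itself a $0$-dimensional contractible subpolyhedron of $C(\flink{B}{J})$ through which the canonical inclusion trivially factors. If $\dim \flink{B}{J}=1$ and $\widetilde{H}^1=0$, then $\flink{B}{J}$ is a disjoint union of finite trees $T_1,\dots,T_k$; choosing one vertex $x_i$ in each component and letting $v$ denote the cone point, the subpolyhedron
\[
L' \;:=\; \flink{B}{J}\;\cup\;\bigcup_{i=1}^{k}[v,x_i]\;\subset\;C(\flink{B}{J})
\]
is a $1$-dimensional tree containing $\flink{B}{J}$, and the composition $\flink{B}{J}\hookrightarrow L'\hookrightarrow C(\flink{B}{J})$ is the canonical inclusion. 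Hence $\flink{B}{J}$ is subconical at every stage.

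With the subconical hypothesis of Proposition~\ref{prop:defretract} verified, we obtain the $G$-equivariant deformation retraction $D(K,G(\mathcal Q))\to D(B,G(\mathcal Q))$. The target is genuinely a tree: it has dimension equal to $\dim B\leqslant 1$, and by Theorem~\ref{thm:main}(\ref{it:main1}) it is $G$-homotopy equivalent to the contractible complex $D(K,G(\mathcal Q))=\underline{E}G$, so it is contractible. There is no serious obstacle in this argument; the main point requiring care is the explicit verification of subconicality for $1$-dimensional forests, carried out above by the construction of $L'$, and the observation that this forest property persists throughout the inductive construction thanks to the uniform cohomology vanishing obtained in the first step.
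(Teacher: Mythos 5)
Your proof is correct and follows essentially the same route as the paper's: both reduce the theorem to Proposition~\ref{prop:defretract} and then verify the subconicality of each $\flink{\B}{J}$ by hand in dimensions $0$ and $1$, constructing $L'$ by joining one vertex per component of $\flink{\B}{J}$ to the cone point. The only notable difference is that you explicitly invoke Proposition~\ref{prop:dimbest} to deduce $\dim \B\leqslant 1$ and hence that each $\flink{\B}{J}$ is a forest, whereas the paper passes directly from $\rH^{\geqslant 1}(\flink{\D}{J})=0$ to contractibility of its components; your extra step makes that inference airtight. A minor omission: the paper separately notes that the subconicality hypothesis is vacuous for maximal $J\in\mathcal Q$ (where $\flink{\B}{J}=\emptyset$), a case your case analysis does not mention; but this is trivial and does not affect the argument.
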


\begin{proof}We will first prove that $D(\B, G(\mathcal{Q}))$ is a $G$--equivariant deformation retract of $D(\D, G(\mathcal{Q}))$. For this, in light of Proposition~\ref{prop:defretract}, it is enough to show that in the construction of Bestvina complex, for any $J \in \mathcal Q$ the polyhedron $\flink{\B}{J}$ is subconical. 

First note that the condition does not apply to maximal elements of $\mathcal Q$.
Now suppose that $J \in \mathcal Q$ is such that all the panels in $\flink{\B}{J}$ have been defined. Since $\ucd G \leqslant 1$, by Proposition~\ref{prop:bredondim} we have that $\rH^{n}(\flink{\D}{J}) = 0$ for all $n \geqslant 1$. Thus every connected component of $\flink{\D}{J}$ is contractible. Since $\flink{\B}{J}$ and $\flink{\D}{J}$ are homotopy equivalent (cf.\ proof of Theorem~\ref{thm:main}.(\ref{it:main2})), the same is true for $\flink{\B}{J}$. 

 Take $\flink{\B}{J}$ and add a disjoint vertex $v_J$. Pick a vertex $v_i$ in every connected component of $\flink{\B}{J}$ and join every $v_i$ with $v_J$ by an edge. One easily verifies that the resulting space is a contractible polyhedron which embeds into $C(\flink{\B}{J})$ (vertex $v_J$ is sent to the cone-point of $C(\flink{\B}{J})$). 
Thus this polyhedron is subconical. 

It remains to show that $D(\B, G(\mathcal{Q}))$ is a tree. By the above $D(\B, G(\mathcal{Q}))$ is a $G$--deformation retract of $D(\D, G(\mathcal{Q}))$. It follows from Theorem~\ref{thm:main} and the assumption that $\ucd G\leqslant 1$ that $D(\B, G(\mathcal{Q}))$ is at most a $1$--dimensional model for $\ue G$, hence a tree.
\end{proof}

\begin{figure}[!h]
\centering
\begin{tikzpicture}[scale=0.44]

\definecolor{vlgray}{RGB}{230,230,230}
\definecolor{tpurple}{RGB}{250,125,000}


\draw[fill=vlgray] (-2,0)-- (2,0)--(0,3)--(-2,0);

\draw[fill] (2,0)   circle [radius=0.1];

\draw[fill] (0,0)   circle [radius=0.1];

\draw[fill] (-2,0)   circle [radius=0.1];

\draw[fill] (4,-2)   circle [radius=0.1];

\draw[fill] (-3,-1)   circle [radius=0.1];
\draw[fill] (3,-1)   circle [radius=0.1];

\draw[fill] (-4,-2)   circle [radius=0.1];

\draw[fill] (0,1)   circle [radius=0.1];

\draw[fill] (-1,1.5)   circle [radius=0.1];

\draw[fill] (1,1.5)   circle [radius=0.1];

\draw[fill] (0,3)   circle [radius=0.1];

\draw[fill] (0,4.5)   circle [radius=0.1];

\draw[fill] (0,6)   circle [radius=0.1];

\draw[thick] (2,0) to (0,0);
\draw[thick] (2,0) to (0,1);
\draw[thick] (2,0) to (1,1.5);
\draw[thick] (2,0) to (3,-1);

\draw[thick] (-2,0) to (0,0);
\draw[thick] (-2,0) to (0,1);
\draw[thick] (-2,0) to (-1,1.5);
\draw[thick] (-2,0) to (-3,-1);

\draw[thick] (0,3) to (0,4.5);

\draw[thick] (0,3) to (0,1);

\draw[thick] (0,3) to (-1,1.5);
\draw[thick] (0,3) to (1,1.5);

\draw[thick] (-4,-2) to (-3,-1);
\draw[thick] (4,-2) to (3,-1);
\draw[thick] (0,6) to (0,4.5);

\draw[thick] (1,1.5) to (0,1);
\draw[thick] (-1,1.5) to (0,1);
\draw[thick] (0,0) to (0,1);



\draw[fill=tpurple, tpurple] (0,1)   circle [radius=0.1];
\draw[fill=tpurple, tpurple] (0,4.5)   circle [radius=0.1];
\draw[fill=tpurple, tpurple] (-3,-1)   circle [radius=0.1];
\draw[fill=tpurple, tpurple] (3,-1)   circle [radius=0.1];
\draw[fill=tpurple, tpurple] (0,3)   circle [radius=0.1];
\draw[fill=tpurple, tpurple] (2,0)   circle [radius=0.1];
\draw[fill=tpurple, tpurple] (-2,0)   circle [radius=0.1];

\draw[thick, tpurple] (2,0) to (0,1);
\draw[thick, tpurple] (-2,0) to (0,1);
\draw[thick, tpurple] (0,4.5) to (0,1);
\draw[thick, tpurple] (-3,-1) to (-2,0);
\draw[thick, tpurple] (3,-1) to (2,0);

\node [tpurple]   at (1,3.75)  {$\B$};
\node [black]   at (2,2)  {$\D$};

\node [below right]   at (-2,0)  {$J$};

\node [tpurple]   at (-2.5,0.5)  {$v_J$};

\end{tikzpicture}
\caption{Complex $\B$ (orange) embeds into $\D$ (black) as a panel deformation retract. After embedding, vertex $v_J \in \pan{\B}{J}$ is equal to vertex $J \in\pan{\D}{J}$.}
\label{fig:defretract}
\end{figure}
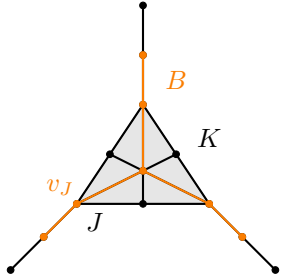

\section{Applications and examples}\label{sec:applications}

In this section we present some classes of groups to which our results apply. In particular, we give a proof of Theorem~\ref{thm:building}. 
We also give explicit examples of Bestvina complex in some cases.

\subsection{Non-positively curved simple complexes of finite groups}

In general the question whether a given simple complex of finite groups is (strictly) developable is difficult, and it may be even more difficult to check if the standard development is a model for $\E{G}$. The theory of \emph{non-positively curved} simple complexes of finite groups gives criteria to answer both question in the positive. 
For the definition of non-positively curved simple complexes of finite groups we refer the reader to \cite[II.12]{BH}. The crucial theorem \cite[Theorem~II.12.28]{BH} states that a non-positively curved simple complex of finite groups $G(\mathcal Q)$ is strictly developable and that the standard development $D(\D, G(\mathcal{Q}))$ admits a $\mathrm{CAT}(0)$ metric such that $G$ acts by isometries.
This in particular implies that $D(\D, G(\mathcal{Q}))$ is a model for $\E{G}$ \cite[Corollary~II.2.8(1)]{BH}, and thus Theorem~\ref{thm:main} applies to $G(\mathcal{Q})$. 


\subsection{Automorphism groups of buildings}\label{subsec:autbuildings} A large class of groups that arise as fundamental groups of non-positively curved simple complexes of groups are the automorphism groups of buildings. Before proving Theorem~\ref{thm:building} we need to recall some
terminology. Our exposition loosely follows \cite[Section~5]{DMP}.

A Coxeter system $(W, S)$ is a group $W$ (called a \emph{Coxeter group}) generated by a finite set $S$ and given by the following presentation
\[W=\langle S \mid (s_is_j)^{m_{ij}} \text{ for all } s_i, s_j \in S \rangle\]
where $m_{ii} = 1$ for all $i$, and $m_{ij} = m_{ji} \in \{ 2, 3, \ldots, \infty\}$ (by $m_{ij} = \infty$ we mean that there
is no relation between $s_i$ and $s_j$). We refer the reader to \cite{Davbook} for a detailed treatment of Coxeter groups.

For a subset $J\subset S$ let $W_J$ denote the subgroup of $W$ generated by $J$ (we set $W_{\emptyset}= \{e\}$). If $W_J$ is finite then we call it a \emph{special spherical subgroup} of $W$. Let $\mathcal Q$ be the poset \[\{ J \subseteq S \mid W_J \text{ is a special spherical subgroup of } W\}\]
ordered by inclusion. Note that $\mathcal Q$ contains the empty set  $\emptyset$ as the smallest element.\smallskip

Now suppose that we are given a group $G$ together with subgroups $B$ and $\{P_s\}_{s \in S}$ such that $B \subset P_s$ for every $s \in S$.
For any subset $J \subseteq S$ define the \emph{standard parabolic subgroup} $P_J$ as \[P_J = \langle P_s \mid s \in J \rangle \subseteq G.\]
A coset $gP_J/B$ is called a $J$--\emph{residue}. Assume now that $C=(G,B, \{P_s\}_{s \in S})$ is a building of type $(W,S)$ (see \cite[Example 1.1, \S3]{Davbuild}) and suppose that for every $J \in \mathcal Q$ the subgroup $P_J\subset G$ is finite. In this case $G$ acts properly and chamber transitively on the building $C$. On the other hand, any group acting chamber transitively on a building $C$ of type $(W,S)$ is of the form $(G,B, \{P_s\}_{s \in S})$ where $B$ is the stabiliser of the chamber $c \in C$ and $P_s$ is the stabiliser of the $\{s\}$--residue containing $c$. 
Note that if $(W,S)$ is a Coxeter system then $W$ acts properly and chamber transitively on the building $(W, \{e\}, \{\langle s_i\rangle\}_{s_i \in S})$ of type $(W,S)$. 

We need the following basic lemma.

\begin{lemma}\label{lem:buildingmin} In the above setting, if $J <T$, then $P_{J} \subset P_T$ is a proper inclusion.

\begin{proof}
Identify the set of chambers of $C$ with $G/B$. By definition of a building \cite[\S3]{Davbuild}, there is a $W$--valued distance function \[\delta \colon C \times C \to W\] such that chambers $c,c' \in C$ belong to the same $T$--residue if and only if $\delta(c,c') \in W_T$. Consider the building $W=(W, \{e\}, \{\langle s_i\rangle\}_{s_i \in S})$ and pick a chamber $w \in W$ with $w \in W_T \smallsetminus W_{J}$. Let $\alpha$ be a $W$--isometry $\alpha \colon W \to C$
given by $\alpha(e)=eB$ (see \cite[\S6]{Davbuild}). By definition of a $W$--isometry we have
\[\delta(\alpha(e), \alpha(w))=w \in W_T,\]
and thus $\alpha(e)=eB$ and $\alpha(w)$ both belong to the $T$--residue $P_T/B$. Thus $\alpha(w)=hB$ for some $h \in P_T$. Now suppose that $h \in P_{J}$. In that case, $eB$ and $hB$ belong to the same $J$--residue of $C$ and thus $w=\delta(eB,hB) \in W_{J}$ which contradicts the choice of $w$.
\end{proof}
\end{lemma}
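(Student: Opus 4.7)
The plan is to use the $W$-valued distance function of the building to exhibit an element of $P_T$ that fails to lie in $P_J$. The key point is that in a Coxeter system, distinct subsets of $S$ generate distinct subgroups of $W$; in particular $J \subsetneq T$ implies $W_J \subsetneq W_T$, so there exists an element $w \in W_T \smallsetminus W_J$. The goal is to translate this separation in $W$ into a separation of residues in $C$, and hence into a separation of $P_T$ from $P_J$ as subgroups of $G$.

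First I would identify the chamber set of the building $C$ with $G/B$ and recall from the axiomatics (as in \cite{Davbuild}) the $W$-valued distance $\delta\colon C\times C \to W$, together with the defining property that two chambers $c,c'$ lie in a common $U$-residue iff $\delta(c,c')\in W_U$. In particular, the $T$-residue of the base chamber $eB$ is exactly the set of chambers $gB$ with $\delta(eB,gB)\in W_T$, and similarly for $J$; moreover this $T$-residue coincides with $P_T/B$ as a subset of $G/B$, and likewise $P_J/B$ for the $J$-residue.

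Next I would pick $w \in W_T \smallsetminus W_J$ and use a $W$-isometry $\alpha\colon W \to C$ with $\alpha(e) = eB$ (which exists by the general theory of buildings recalled in \cite[\S6]{Davbuild}). By definition of a $W$-isometry, $\delta(\alpha(e),\alpha(w)) = w \in W_T$, so $\alpha(w)$ lies in the $T$-residue of $eB$, i.e.\ $\alpha(w) = hB$ for some $h \in P_T$. On the other hand, $w \notin W_J$ means $\alpha(w)$ does not lie in the $J$-residue of $eB$, so $h \notin P_J$. This produces an element $h \in P_T \smallsetminus P_J$, proving the proper inclusion.

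The only real subtlety I expect is justifying $W_J \subsetneq W_T$ from $J \subsetneq T$; this is a classical property of Coxeter systems (each generator $s$ fails to lie in the subgroup generated by $S\smallsetminus\{s\}$, and more generally the map $U \mapsto W_U$ on subsets of $S$ is injective), so I would simply invoke it from \cite{Davbook}. Everything else is a direct application of the building axioms: identify residues with cosets, translate distance statements via the $W$-isometry, and read off the required element of $P_T \smallsetminus P_J$.
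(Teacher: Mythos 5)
Your proposal is correct and follows essentially the same route as the paper: choose $w\in W_T\smallsetminus W_J$ (using the strict monotonicity $W_J\subsetneq W_T$, which the paper takes for granted), transport it into $C$ via a $W$-isometry based at $eB$, and read off an element $h\in P_T\smallsetminus P_J$ from the residue description of the $W$-distance. The only cosmetic difference is that the paper phrases the final step as a proof by contradiction while you state the contrapositive directly.
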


Now consider a poset \begin{equation*}\label{eq:bigposet} \mathcal P = \{gP_J \mid g \in G, J \in \mathcal Q\}\end{equation*} where the partial order is given by the inclusion of cosets.
 Notice that there is an inclusion of posets $i \colon \mathcal Q \hookrightarrow \mathcal P$ given by $J \mapsto P_J$. The geometric realisation $\abs{\mathcal{P}}$ of $\mathcal{P}$ is called the \emph{geometric realisation} of the building $(G,B, \{P_s\}_{s \in S})$. There is a $G$--action on $\abs{\mathcal{P}}$ given by $g \cdot g'P_J = gg'P_J$. One verifies that the stabilisers of this action are the conjugates of groups $P_J$ for $J \in \mathcal Q$, and that the subcomplex $\abs{i(\mathcal Q)} \cong \abs{\mathcal Q}$ is the strict fundamental domain. Now by \cite[Theorem~11.1]{Davbuild} there is a complete $\mathrm{CAT}(0)$ metric on $\abs{P}$ such that $G$ acts by isometries. Thus by the above considerations and \cite[Corollary~12.22]{BH} we obtain that $G$ is the fundamental group of a complex of groups $G(\mathcal Q)$ over $\mathcal Q$ where the local group at $J \in \mathcal Q$ is $P_J$ (the local group at $\emptyset \in Q$ is $B$) and the map $ \phi_{TJ} \colon P_J \to P_T$ for $J <T$ is the inclusion $P_J \subset P_T$. 
 Observe that by Lemma~\ref{lem:buildingmin}, the complex $G(\mathcal Q)$ satisfies the assumptions of Definition~\ref{def:scofg}(\ref{it:scofg2}).

Moreover, if we let $\D$ be the standard panel complex associated to $\mathcal Q$ (see Definition~\ref{def:daviscx}) then one easily verifies that the basic construction $D(\D, G(\mathcal{Q}))$ is naturally homeomorphic to $|\mathcal P|$, see \cite[\S10]{Davbuild}.

\begin{theorem}\label{thm:building}Let $G$ be a group acting properly and chamber transitively on a building of type $(W,S)$, and let $G(\mQ)$ be the associated simple complex of groups. Then $D(B, G(\mathcal{Q}))$ is a model for $\underline{E}G$ satisfying 
\[\mathrm{dim}(D(\B, G(\mathcal{Q}))) = \left\{ \begin{array}{lcr}
      \vcd W        & \text{ if } & \vcd W \neq 2,\\
      2 \text{ or } 3 & \text{ if } & \vcd W =2. \\

      \end{array} \right.\]
\end{theorem}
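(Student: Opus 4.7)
The plan is to verify the hypothesis of Theorem~\ref{thm:main}(\ref{it:main2}) in the present setting and then identify $\ucd G$ with $\vcd W$. The preceding discussion already packages $G(\mathcal{Q})$ as a strictly developable simple complex of finite groups with fundamental group $G$ --- the non-surjectivity condition of Definition~\ref{def:scofg}(\ref{it:scofg2}) being ensured by Lemma~\ref{lem:buildingmin} --- and identifies $D(\D, G(\mathcal{Q}))$ with the geometric realisation $|\mathcal{P}|$ of the building. By Moussong's theorem \cite[Theorem~11.1]{Davbuild}, $|\mathcal{P}|$ carries a complete $\mathrm{CAT}(0)$ metric preserved by $G$. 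Since the action is proper with finite cell stabilisers, every finite subgroup of $G$ fixes a non-empty convex, hence contractible, subcomplex, so $D(\D, G(\mathcal{Q}))$ is a model for $\underline{E}G$.

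Having verified the hypothesis, Theorem~\ref{thm:main}(\ref{it:main2}) immediately yields that $D(\B, G(\mathcal{Q}))$ is a cocompact model for $\underline{E}G$ whose dimension equals $\ucd G$ when $\ucd G \neq 2$ and lies in $\{2,3\}$ when $\ucd G = 2$. It therefore suffices to prove $\ucd G = \vcd W$.

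The key observation for this identification is that Proposition~\ref{prop:bredondim} expresses $\ucd G$ purely in terms of the reduced cohomology of the subcomplexes $K_{>J}$ of $K$, quantities which depend only on the poset $\mathcal{Q}$, hence only on the Coxeter system $(W,S)$ --- not on the particular group $G$ acting on the building. Since the Coxeter group $W$ itself acts properly and chamber transitively on its own thin building of type $(W,S)$, whose associated standard development is the Davis complex $\Sigma_W$ (again a model for $\underline{E}W$ by Moussong), applying Proposition~\ref{prop:bredondim} to $W$ gives $\ucd W = \ucd G$. Combined with the classical Davis--Bestvina characterisation of $\vcd W$ as this same supremum (cf.\ \cite{Best}), this yields $\ucd G = \vcd W$, completing the proof. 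The main obstacle is precisely this last identification of $\vcd W$ with the combinatorial supremum, which is classical but external to the paper; note however that no separate argument is required in the exceptional case $\vcd W = 2$, since the dichotomy in the conclusion is inherited verbatim from Theorem~\ref{thm:main}(\ref{it:main2}) under the substitution $\ucd G = \vcd W$.
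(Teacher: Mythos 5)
Your argument follows the same overall scaffolding as the paper's proof: verify that $|\mathcal{P}| \cong D(\D, G(\mathcal{Q}))$ is a model for $\underline{E}G$ via Moussong's $\mathrm{CAT}(0)$ theorem and the fixed-point theorem (the paper cites \cite[Corollary~II.2.8(1)]{BH}, you give the content of the citation), identify $\ucd G$ with $\vcd W$, and finish by Theorem~\ref{thm:main}(\ref{it:main2}). The point of divergence is how the identification $\ucd G = \vcd W$ is obtained. The paper cites \cite[Theorem~5.4]{DMP} outright for $\ucd G = \vcd W$. You instead observe that the formula of Proposition~\ref{prop:bredondim} depends only on the poset $\mathcal{Q}$, so $\ucd G = \ucd W$ by applying it to $W$ acting on its own thin building; you then invoke ``the classical Davis--Bestvina characterisation of $\vcd W$'' to get $\ucd W = \vcd W$. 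The first step is a genuinely nice observation that makes the combinatorial invariance explicit, and it is implicit but never stated in the paper.

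The weak point is the second step. Calling on Bestvina's paper for the identity $\vcd W = \max\{n : \widetilde{H}^{n-1}(K_{>J}) \neq 0\}$ is not quite a one-line appeal: Bestvina shows $\vcd W = \dim B^{\mathrm{acyclic}}(W,S)$, which then needs to be matched against the combinatorial formula (the $R$--acyclic analogue of Proposition~\ref{prop:dimbest}), and Davis's own formula for $H^*(W;\mathbb{Z}W)$ uses the complements $K_{S\smallsetminus \sigma}$ rather than the upper links $K_{>J}$, so some non-trivial translation is required. In effect you have replaced the paper's single citation of \cite[Theorem~5.4]{DMP} (whose content is precisely $\ucd W = \vcd W$ and more generally $\ucd G = \vcd W$) with an appeal to Bestvina/Davis that is correct in substance but would need the same bookkeeping that \cite{DMP} carries out. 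Your route is valid but no more self-contained than the paper's; it simply relocates the external input.
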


\begin{proof} By the discussion above we get that $G$ acts properly and cocompactly by isometries on a $\mathrm{CAT}(0)$ space $\abs{\mathcal P}$. Thus by \cite[Corollary~II.2.8(1)]{BH} we get that $\abs{\mathcal P} \cong D(\D, G(\mathcal{Q}))$ is a model for $\ue G$. By \cite[Theorem~5.4]{DMP} we have $\ucd G=\vcd W$. The claim follows from Theorem~\ref{thm:main}.\end{proof}

Note that if $W$ is a Coxeter group then Theorem~\ref{thm:building} implies that $D(\B, W(\mathcal{Q}))$ is a model for $\E{W}$ of dimension equal to $\vcd W$ (except  when $\vcd W=2$). We remark that in this case, the basic construction $D(\D, W(\mathcal{Q}))$ is the so-called \emph{Davis complex} $\Sigma_W$ of $W$ (see \cite[Chapter~7]{Davbook}), and the basic construction $D(\B, W(\mathcal{Q}))$ is the original Bestvina complex $B(W,S)$ constructed in \cite{Best}.\medskip

A large class of Coxeter groups that is well suited for constructing examples is the class of \emph{right-angled} Coxeter groups.

\begin{definition}Let $L$ be a finite flag simplicial complex. The \emph{right-angled Coxeter group} associated to $L$ is the group  $W_L$ generated by involutions corresponding bijectively to the vertices of $L$ subject to the relations that two involutions commute if and only if the corresponding vertices are connected by an edge in $L$.\end{definition}

Note that every special spherical subgroup of $W_L$ is of the form ${C_2}^n$ where the generators correspond to vertices of $L$ that form an $(n-1)$--simplex. Thus the poset $\mathcal Q$ of special spherical subgroups of $W_L$ is isomorphic to the poset of simplices of $L$ with the additional smallest element added, namely the element corresponding to the trivial subgroup. Consequently, the complex $\D = \abs{\mathcal Q}$ is equal to the cone on the barycentric subdivision of $L$. 

Many of the examples presented later in this section are the right-angled Coxeter groups (or some of their variations).

\subsection{Graph products of finite groups} An example of a group acting properly and chamber transitively
 on a building is a \emph{graph product} of finite groups. 
Let $L$ be a finite simplicial graph on the vertex set $S$ and suppose that we are given a finite group $P_s$ for every $s \in S$. Then the graph product $G$ is defined as the quotient of the free product of groups $P_s$ for $s \in S$ by the relations \[\text{ $\{ [P_s,P_t]$ if $[s,t]$ is an edge of $L \}$}.\] In other words, elements of subgroups $P_s$ and $P_t$ commute if and only if there is an edge $[s,t]$ in $L$. The group $G$ acts properly and chamber transitively on a building of type $(W,S)$ where $W$ is a right-angled Coxeter group corresponding to graph $L$ \cite[Theorem~5.1]{Davbuild}.
Note that there is a surjection $G \to \prod_{s\in S}P_s$, and its kernel acts freely on the geometric realisation of the building, which is contractible. This implies that $G$ is virtually torsion-free. Since by \cite[Corollary~5.5]{DMP} we have that $\vcd G=\ucd G$, we conclude that Theorem~\ref{thm:building} gives a model for $\ue G$ of dimension equal to $\vcd G$ (except the case where $\vcd G=2$).



\subsection{Concrete examples of Bestvina complex}
The Bestvina complex can be effectively used for computations of Bredon cohomology (with any coefficient system) of the associated fundamental group. In particular, this complex is better suited for computations than the standard complex $\D$. Not only does $\B$ have smaller dimension than $\D$, but almost always it has a simpler cell structure. 

Below we present a few examples showing these features of $\B$.  All examples are the right-angled Coxeter groups $W_L$ associated to various flag complexes $L$. In each case let $\mathcal Q$ denote the poset of special spherical subgroups and let $W_L(\mathcal{Q})$ denote the associated simple complex of groups. Recall that in this case the complex $\D$ is equal to the cone on the barycentric subdivision of $L$.


\begin{example}Let $L$ be a disjoint union of a vertex and an edge. 
One easily sees that we have $\mathrm{dim}(D(\B, W_L(\mathcal Q))) =\vcd W_L =1$ and $\mathrm{dim}(D(\D, W_L(\mathcal Q)))=2$. In Figure~\ref{fig:disjointedge} we present basic constructions $D(\D, W_L(\mathcal{Q}))$ and $D(\B, W_L(\mathcal{Q}))$.
\end{example}

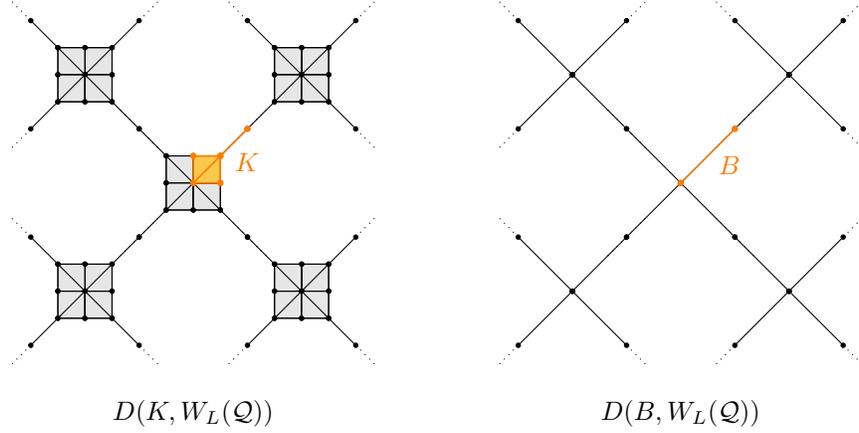
\begin{figure}[!h]
\centering
\begin{tikzpicture}[scale=0.36]

\definecolor{vlgray}{RGB}{230,230,230}
\definecolor{tpurple}{RGB}{250,125,000}
\definecolor{lpurple}{RGB}{250,200,075}

\tikzset{thick/.style={line width=.4pt}}

\tikzset{vth/.style={line width=.6pt}}

\begin{scope}[shift={(-9,0)}]


\draw[fill=vlgray] (0,0) -- (1,0)--(1,1)--(0,1)--(0,0);

\draw[thick] (0,0) -- (1,0)--(1,1)--(0,1)--(0,0);

\draw[thick] (0,0)--(1,1)--(2,2);

\draw[fill] (0,0)   circle [radius=0.08];
\draw[fill] (1,0)   circle [radius=0.08];
\draw[fill] (1,1)   circle [radius=0.08];
\draw[fill] (0,1)   circle [radius=0.08];
\draw[fill] (1,1)   circle [radius=0.08];
\draw[fill] (2,2)   circle [radius=0.08];

\draw[thick, dotted] (2,2)--(2.75,2.75);


\begin{scope}[rotate=90]
\draw[fill=vlgray] (0,0) -- (1,0)--(1,1)--(0,1)--(0,0);

\draw[thick] (0,0) -- (1,0)--(1,1)--(0,1)--(0,0);

\draw[thick] (0,0)--(1,1)--(2,2);

\draw[fill] (0,0)   circle [radius=0.08];
\draw[fill] (1,0)   circle [radius=0.08];
\draw[fill] (1,1)   circle [radius=0.08];
\draw[fill] (0,1)   circle [radius=0.08];
\draw[fill] (1,1)   circle [radius=0.08];
\draw[fill] (2,2)   circle [radius=0.08];

\draw[thick, dotted] (2,2)--(2.75,2.75);

\end{scope}

\begin{scope}[rotate=180]
\draw[fill=vlgray] (0,0) -- (1,0)--(1,1)--(0,1)--(0,0);

\draw[thick] (0,0) -- (1,0)--(1,1)--(0,1)--(0,0);

\draw[thick] (0,0)--(1,1)--(2,2);

\draw[fill] (0,0)   circle [radius=0.08];
\draw[fill] (1,0)   circle [radius=0.08];
\draw[fill] (1,1)   circle [radius=0.08];
\draw[fill] (0,1)   circle [radius=0.08];
\draw[fill] (1,1)   circle [radius=0.08];
\draw[fill] (2,2)   circle [radius=0.08];

\draw[thick, dotted] (2,2)--(2.75,2.75);

\end{scope}

\begin{scope}[rotate=270]
\draw[fill=vlgray] (0,0) -- (1,0)--(1,1)--(0,1)--(0,0);

\draw[thick] (0,0) -- (1,0)--(1,1)--(0,1)--(0,0);

\draw[thick] (0,0)--(1,1)--(2,2);

\draw[fill] (0,0)   circle [radius=0.08];
\draw[fill] (1,0)   circle [radius=0.08];
\draw[fill] (1,1)   circle [radius=0.08];
\draw[fill] (0,1)   circle [radius=0.08];
\draw[fill] (1,1)   circle [radius=0.08];
\draw[fill] (2,2)   circle [radius=0.08];

\draw[thick, dotted] (2,2)--(2.75,2.75);

\end{scope}


\begin{scope}[shift={(-4,-4)}]


\draw[fill=vlgray] (0,0) -- (1,0)--(1,1)--(0,1)--(0,0);

\draw[thick] (0,0) -- (1,0)--(1,1)--(0,1)--(0,0);

\draw[thick] (0,0)--(1,1)--(2,2);

\draw[fill] (0,0)   circle [radius=0.08];
\draw[fill] (1,0)   circle [radius=0.08];
\draw[fill] (1,1)   circle [radius=0.08];
\draw[fill] (0,1)   circle [radius=0.08];
\draw[fill] (1,1)   circle [radius=0.08];
\draw[fill] (2,2)   circle [radius=0.08];

\draw[thick, dotted] (2,2)--(2.75,2.75);


\begin{scope}[rotate=90]
\draw[fill=vlgray] (0,0) -- (1,0)--(1,1)--(0,1)--(0,0);

\draw[thick] (0,0) -- (1,0)--(1,1)--(0,1)--(0,0);

\draw[thick] (0,0)--(1,1)--(2,2);

\draw[fill] (0,0)   circle [radius=0.08];
\draw[fill] (1,0)   circle [radius=0.08];
\draw[fill] (1,1)   circle [radius=0.08];
\draw[fill] (0,1)   circle [radius=0.08];
\draw[fill] (1,1)   circle [radius=0.08];
\draw[fill] (2,2)   circle [radius=0.08];

\draw[thick, dotted] (2,2)--(2.75,2.75);

\end{scope}

\begin{scope}[rotate=180]
\draw[fill=vlgray] (0,0) -- (1,0)--(1,1)--(0,1)--(0,0);

\draw[thick] (0,0) -- (1,0)--(1,1)--(0,1)--(0,0);

\draw[thick] (0,0)--(1,1)--(2,2);

\draw[fill] (0,0)   circle [radius=0.08];
\draw[fill] (1,0)   circle [radius=0.08];
\draw[fill] (1,1)   circle [radius=0.08];
\draw[fill] (0,1)   circle [radius=0.08];
\draw[fill] (1,1)   circle [radius=0.08];
\draw[fill] (2,2)   circle [radius=0.08];

\draw[thick, dotted] (2,2)--(2.75,2.75);

\end{scope}

\begin{scope}[rotate=270]
\draw[fill=vlgray] (0,0) -- (1,0)--(1,1)--(0,1)--(0,0);

\draw[thick] (0,0) -- (1,0)--(1,1)--(0,1)--(0,0);

\draw[thick] (0,0)--(1,1)--(2,2);

\draw[fill] (0,0)   circle [radius=0.08];
\draw[fill] (1,0)   circle [radius=0.08];
\draw[fill] (1,1)   circle [radius=0.08];
\draw[fill] (0,1)   circle [radius=0.08];
\draw[fill] (1,1)   circle [radius=0.08];
\draw[fill] (2,2)   circle [radius=0.08];

\draw[thick, dotted] (2,2)--(2.75,2.75);

\end{scope}


\end{scope}

\begin{scope}[shift={(4,4)}]


\draw[fill=vlgray] (0,0) -- (1,0)--(1,1)--(0,1)--(0,0);

\draw[thick] (0,0) -- (1,0)--(1,1)--(0,1)--(0,0);

\draw[thick] (0,0)--(1,1)--(2,2);

\draw[fill] (0,0)   circle [radius=0.08];
\draw[fill] (1,0)   circle [radius=0.08];
\draw[fill] (1,1)   circle [radius=0.08];
\draw[fill] (0,1)   circle [radius=0.08];
\draw[fill] (1,1)   circle [radius=0.08];
\draw[fill] (2,2)   circle [radius=0.08];

\draw[thick, dotted] (2,2)--(2.75,2.75);


\begin{scope}[rotate=90]
\draw[fill=vlgray] (0,0) -- (1,0)--(1,1)--(0,1)--(0,0);

\draw[thick] (0,0) -- (1,0)--(1,1)--(0,1)--(0,0);

\draw[thick] (0,0)--(1,1)--(2,2);

\draw[fill] (0,0)   circle [radius=0.08];
\draw[fill] (1,0)   circle [radius=0.08];
\draw[fill] (1,1)   circle [radius=0.08];
\draw[fill] (0,1)   circle [radius=0.08];
\draw[fill] (1,1)   circle [radius=0.08];
\draw[fill] (2,2)   circle [radius=0.08];

\draw[thick, dotted] (2,2)--(2.75,2.75);

\end{scope}

\begin{scope}[rotate=180]
\draw[fill=vlgray] (0,0) -- (1,0)--(1,1)--(0,1)--(0,0);

\draw[thick] (0,0) -- (1,0)--(1,1)--(0,1)--(0,0);

\draw[thick] (0,0)--(1,1)--(2,2);

\draw[fill] (0,0)   circle [radius=0.08];
\draw[fill] (1,0)   circle [radius=0.08];
\draw[fill] (1,1)   circle [radius=0.08];
\draw[fill] (0,1)   circle [radius=0.08];
\draw[fill] (1,1)   circle [radius=0.08];
\draw[fill] (2,2)   circle [radius=0.08];

\draw[thick, dotted] (2,2)--(2.75,2.75);

\end{scope}

\begin{scope}[rotate=270]
\draw[fill=vlgray] (0,0) -- (1,0)--(1,1)--(0,1)--(0,0);

\draw[thick] (0,0) -- (1,0)--(1,1)--(0,1)--(0,0);

\draw[thick] (0,0)--(1,1)--(2,2);

\draw[fill] (0,0)   circle [radius=0.08];
\draw[fill] (1,0)   circle [radius=0.08];
\draw[fill] (1,1)   circle [radius=0.08];
\draw[fill] (0,1)   circle [radius=0.08];
\draw[fill] (1,1)   circle [radius=0.08];
\draw[fill] (2,2)   circle [radius=0.08];

\draw[thick, dotted] (2,2)--(2.75,2.75);

\end{scope}


\end{scope}

\begin{scope}[shift={(4,-4)}]


\draw[fill=vlgray] (0,0) -- (1,0)--(1,1)--(0,1)--(0,0);

\draw[thick] (0,0) -- (1,0)--(1,1)--(0,1)--(0,0);

\draw[thick] (0,0)--(1,1)--(2,2);

\draw[fill] (0,0)   circle [radius=0.08];
\draw[fill] (1,0)   circle [radius=0.08];
\draw[fill] (1,1)   circle [radius=0.08];
\draw[fill] (0,1)   circle [radius=0.08];
\draw[fill] (1,1)   circle [radius=0.08];
\draw[fill] (2,2)   circle [radius=0.08];

\draw[thick, dotted] (2,2)--(2.75,2.75);


\begin{scope}[rotate=90]
\draw[fill=vlgray] (0,0) -- (1,0)--(1,1)--(0,1)--(0,0);

\draw[thick] (0,0) -- (1,0)--(1,1)--(0,1)--(0,0);

\draw[thick] (0,0)--(1,1)--(2,2);

\draw[fill] (0,0)   circle [radius=0.08];
\draw[fill] (1,0)   circle [radius=0.08];
\draw[fill] (1,1)   circle [radius=0.08];
\draw[fill] (0,1)   circle [radius=0.08];
\draw[fill] (1,1)   circle [radius=0.08];
\draw[fill] (2,2)   circle [radius=0.08];

\draw[thick, dotted] (2,2)--(2.75,2.75);

\end{scope}

\begin{scope}[rotate=180]
\draw[fill=vlgray] (0,0) -- (1,0)--(1,1)--(0,1)--(0,0);

\draw[thick] (0,0) -- (1,0)--(1,1)--(0,1)--(0,0);

\draw[thick] (0,0)--(1,1)--(2,2);

\draw[fill] (0,0)   circle [radius=0.08];
\draw[fill] (1,0)   circle [radius=0.08];
\draw[fill] (1,1)   circle [radius=0.08];
\draw[fill] (0,1)   circle [radius=0.08];
\draw[fill] (1,1)   circle [radius=0.08];
\draw[fill] (2,2)   circle [radius=0.08];

\draw[thick, dotted] (2,2)--(2.75,2.75);

\end{scope}

\begin{scope}[rotate=270]
\draw[fill=vlgray] (0,0) -- (1,0)--(1,1)--(0,1)--(0,0);

\draw[thick] (0,0) -- (1,0)--(1,1)--(0,1)--(0,0);

\draw[thick] (0,0)--(1,1)--(2,2);

\draw[fill] (0,0)   circle [radius=0.08];
\draw[fill] (1,0)   circle [radius=0.08];
\draw[fill] (1,1)   circle [radius=0.08];
\draw[fill] (0,1)   circle [radius=0.08];
\draw[fill] (1,1)   circle [radius=0.08];
\draw[fill] (2,2)   circle [radius=0.08];

\draw[thick, dotted] (2,2)--(2.75,2.75);

\end{scope}


\end{scope}

\begin{scope}[shift={(-4,4)}]


\draw[fill=vlgray] (0,0) -- (1,0)--(1,1)--(0,1)--(0,0);

\draw[thick] (0,0) -- (1,0)--(1,1)--(0,1)--(0,0);

\draw[thick] (0,0)--(1,1)--(2,2);

\draw[fill] (0,0)   circle [radius=0.08];
\draw[fill] (1,0)   circle [radius=0.08];
\draw[fill] (1,1)   circle [radius=0.08];
\draw[fill] (0,1)   circle [radius=0.08];
\draw[fill] (1,1)   circle [radius=0.08];
\draw[fill] (2,2)   circle [radius=0.08];

\draw[thick, dotted] (2,2)--(2.75,2.75);


\begin{scope}[rotate=90]
\draw[fill=vlgray] (0,0) -- (1,0)--(1,1)--(0,1)--(0,0);

\draw[thick] (0,0) -- (1,0)--(1,1)--(0,1)--(0,0);

\draw[thick] (0,0)--(1,1)--(2,2);

\draw[fill] (0,0)   circle [radius=0.08];
\draw[fill] (1,0)   circle [radius=0.08];
\draw[fill] (1,1)   circle [radius=0.08];
\draw[fill] (0,1)   circle [radius=0.08];
\draw[fill] (1,1)   circle [radius=0.08];
\draw[fill] (2,2)   circle [radius=0.08];

\draw[thick, dotted] (2,2)--(2.75,2.75);

\end{scope}

\begin{scope}[rotate=180]
\draw[fill=vlgray] (0,0) -- (1,0)--(1,1)--(0,1)--(0,0);

\draw[thick] (0,0) -- (1,0)--(1,1)--(0,1)--(0,0);

\draw[thick] (0,0)--(1,1)--(2,2);

\draw[fill] (0,0)   circle [radius=0.08];
\draw[fill] (1,0)   circle [radius=0.08];
\draw[fill] (1,1)   circle [radius=0.08];
\draw[fill] (0,1)   circle [radius=0.08];
\draw[fill] (1,1)   circle [radius=0.08];
\draw[fill] (2,2)   circle [radius=0.08];

\draw[thick, dotted] (2,2)--(2.75,2.75);

\end{scope}

\begin{scope}[rotate=270]
\draw[fill=vlgray] (0,0) -- (1,0)--(1,1)--(0,1)--(0,0);

\draw[thick] (0,0) -- (1,0)--(1,1)--(0,1)--(0,0);

\draw[thick] (0,0)--(1,1)--(2,2);

\draw[fill] (0,0)   circle [radius=0.08];
\draw[fill] (1,0)   circle [radius=0.08];
\draw[fill] (1,1)   circle [radius=0.08];
\draw[fill] (0,1)   circle [radius=0.08];
\draw[fill] (1,1)   circle [radius=0.08];
\draw[fill] (2,2)   circle [radius=0.08];

\draw[thick, dotted] (2,2)--(2.75,2.75);

\end{scope}


\end{scope}

\draw[fill=lpurple] (0,0) -- (1,0)--(1,1)--(0,1)--(0,0);

\draw[vth, tpurple] (0,0) -- (1,0)--(1,1)--(0,1)--(0,0);

\draw[vth, tpurple] (0,0)--(1,1)--(2,2);

\draw[fill,tpurple] (0,0)   circle [radius=0.1];
\draw[fill,tpurple] (1,0)   circle [radius=0.1];
\draw[fill,tpurple] (1,1)   circle [radius=0.1];
\draw[fill,tpurple] (0,1)   circle [radius=0.1];
\draw[fill,tpurple] (1,1)   circle [radius=0.1];
\draw[fill,tpurple] (2,2)   circle [radius=0.1];



\node    at (0,-8.5)  {$D(\D, W_L(\mathcal Q))$};

\node [tpurple]    at (2,0.75)  {$\D$};

\end{scope}

\begin{scope}[shift={(9,0)}]

\begin{scope}

\draw[thick] (0,0)--(2,2);

\draw[fill] (0,0)   circle [radius=0.08];
\draw[fill] (2,2)   circle [radius=0.08];

\draw[thick, dotted] (2,2)--(2.75,2.75);

\end{scope}

\begin{scope}[rotate=90]

\draw[thick] (0,0)--(2,2);

\draw[fill] (0,0)   circle [radius=0.08];
\draw[fill] (2,2)   circle [radius=0.08];

\draw[thick, dotted] (2,2)--(2.75,2.75);

\end{scope}

\begin{scope}[rotate=180]

\draw[thick] (0,0)--(2,2);

\draw[fill] (0,0)   circle [radius=0.08];
\draw[fill] (2,2)   circle [radius=0.08];

\draw[thick, dotted] (2,2)--(2.75,2.75);

\end{scope}

\begin{scope}[rotate=270]

\draw[thick] (0,0)--(2,2);

\draw[fill] (0,0)   circle [radius=0.08];
\draw[fill] (2,2)   circle [radius=0.08];

\draw[thick, dotted] (2,2)--(2.75,2.75);
\end{scope}


\begin{scope}[shift={(4,4)}]

\begin{scope}

\draw[thick] (0,0)--(2,2);

\draw[fill] (0,0)   circle [radius=0.08];
\draw[fill] (2,2)   circle [radius=0.08];

\draw[thick, dotted] (2,2)--(2.75,2.75);

\end{scope}

\begin{scope}[rotate=90]

\draw[thick] (0,0)--(2,2);

\draw[fill] (0,0)   circle [radius=0.08];
\draw[fill] (2,2)   circle [radius=0.08];

\draw[thick, dotted] (2,2)--(2.75,2.75);
\end{scope}

\begin{scope}[rotate=180]

\draw[thick] (0,0)--(2,2);

\draw[fill] (0,0)   circle [radius=0.08];
\draw[fill] (2,2)   circle [radius=0.08];

\draw[thick, dotted] (2,2)--(2.75,2.75);

\end{scope}

\begin{scope}[rotate=270]

\draw[thick] (0,0)--(2,2);

\draw[fill] (0,0)   circle [radius=0.08];
\draw[fill] (2,2)   circle [radius=0.08];

\draw[thick, dotted] (2,2)--(2.75,2.75);
\end{scope}


\end{scope}

\begin{scope}[shift={(-4,-4)}]

\begin{scope}

\draw[thick] (0,0)--(2,2);

\draw[fill] (0,0)   circle [radius=0.08];
\draw[fill] (2,2)   circle [radius=0.08];

\draw[thick, dotted] (2,2)--(2.75,2.75);
\end{scope}

\begin{scope}[rotate=90]

\draw[thick] (0,0)--(2,2);

\draw[fill] (0,0)   circle [radius=0.08];
\draw[fill] (2,2)   circle [radius=0.08];

\draw[thick, dotted] (2,2)--(2.75,2.75);
\end{scope}

\begin{scope}[rotate=180]

\draw[thick] (0,0)--(2,2);

\draw[fill] (0,0)   circle [radius=0.08];
\draw[fill] (2,2)   circle [radius=0.08];

\draw[thick, dotted] (2,2)--(2.75,2.75);

\end{scope}

\begin{scope}[rotate=270]

\draw[thick] (0,0)--(2,2);

\draw[fill] (0,0)   circle [radius=0.08];
\draw[fill] (2,2)   circle [radius=0.08];

\draw[thick, dotted] (2,2)--(2.75,2.75);
\end{scope}


\end{scope}

\begin{scope}[shift={(4,-4)}]

\begin{scope}

\draw[thick] (0,0)--(2,2);

\draw[fill] (0,0)   circle [radius=0.08];
\draw[fill] (2,2)   circle [radius=0.08];

\draw[thick, dotted] (2,2)--(2.75,2.75);
\end{scope}

\begin{scope}[rotate=90]

\draw[thick] (0,0)--(2,2);

\draw[fill] (0,0)   circle [radius=0.08];
\draw[fill] (2,2)   circle [radius=0.08];

\draw[thick, dotted] (2,2)--(2.75,2.75);
\end{scope}

\begin{scope}[rotate=180]

\draw[thick] (0,0)--(2,2);

\draw[fill] (0,0)   circle [radius=0.08];
\draw[fill] (2,2)   circle [radius=0.08];

\draw[thick, dotted] (2,2)--(2.75,2.75);

\end{scope}

\begin{scope}[rotate=270]

\draw[thick] (0,0)--(2,2);

\draw[fill] (0,0)   circle [radius=0.08];
\draw[fill] (2,2)   circle [radius=0.08];

\draw[thick, dotted] (2,2)--(2.75,2.75);
\end{scope}


\end{scope}

\begin{scope}[shift={(-4,4)}]

\begin{scope}

\draw[thick] (0,0)--(2,2);

\draw[fill] (0,0)   circle [radius=0.08];
\draw[fill] (2,2)   circle [radius=0.08];

\draw[thick, dotted] (2,2)--(2.75,2.75);
\end{scope}

\begin{scope}[rotate=90]

\draw[thick] (0,0)--(2,2);

\draw[fill] (0,0)   circle [radius=0.08];
\draw[fill] (2,2)   circle [radius=0.08];

\draw[thick, dotted] (2,2)--(2.75,2.75);
\end{scope}

\begin{scope}[rotate=180]

\draw[thick] (0,0)--(2,2);

\draw[fill] (0,0)   circle [radius=0.08];
\draw[fill] (2,2)   circle [radius=0.08];

\draw[thick, dotted] (2,2)--(2.75,2.75);

\end{scope}

\begin{scope}[rotate=270]

\draw[thick] (0,0)--(2,2);

\draw[fill] (0,0)   circle [radius=0.08];
\draw[fill] (2,2)   circle [radius=0.08];

\draw[thick, dotted] (2,2)--(2.75,2.75);
\end{scope}


\end{scope}

\draw[vth, tpurple] (0,0)--(2,2);

\draw[fill, tpurple] (0,0)   circle [radius=0.1];
\draw[fill,tpurple] (2,2)   circle [radius=0.1];

\node    at (0,-8.5)  {$D(\B, W_L(\mathcal Q))$};

\node [tpurple]    at (1.8,0.65)  {$\B$};

\end{scope}

\end{tikzpicture}
\caption{Basic constructions $D(\D, W_L(\mathcal Q))$ and $D(\B, W_L(\mathcal Q))$. Panel complexes $\D$ and $\B$ are orange.}
\label{fig:disjointedge}
\end{figure}

\begin{example}The following example shows that besides lower dimension, the cell structure of $\B$ is significantly simpler than the cell structure of $\D$. Let $L$ be a flag complex which is a hexagon built out of six triangles. In this case we have $\mathrm{dim}(D(\D, W_L(\mathcal Q)))=3$ and $\mathrm{dim}(D(\B, W_L(\mathcal Q)))=\vcd W_L=2$. Panel complexes $\D$ and $\B$ are shown in Figure~\ref{fig:simplecell}.
\end{example}

\begin{figure}[!h]
\centering

\tikzset{
  foo/.style={very thin, color=#1},
  sru/.style={very thick, color=#1},
}

\begin{tikzpicture}[scale=0.38]
\definecolor{torange}{RGB}{250,125,000}
\definecolor{lorange}{RGB}{250,200,075}

\definecolor{vlgray}{RGB}{230,230,230}
\definecolor{tpurple}{RGB}{100,150,00}
\definecolor{lpurple}{RGB}{150,200,050}

\definecolor{red}{RGB}{000,50,200}

\begin{scope}[shift={(3.5,0)}]
\draw[fill, vlgray] (2,1)--(0,2)--(-2,1)--(-2,-1)--(0,-2)--(2,-1)--(2,1);
\draw (2,1)--(0,2)--(-2,1)--(-2,-1)--(0,-2)--(2,-1)--(2,1);

\draw[fill]  (2,1)  circle [radius=0.1];
\draw[fill]  (0,2)  circle [radius=0.1];
\draw[fill]  (-2,1)  circle [radius=0.1];
\draw[fill]  (-2,-1)  circle [radius=0.1];
\draw[fill]  (0,-2)  circle [radius=0.1];
\draw[fill]  (2,-1)  circle [radius=0.1];


 \node [black, below ] at (-2,-2)   {$\B$};

\draw[thick, tpurple] (-2,1)--(0,2);

\draw[fill, tpurple]  (-2,1) circle [radius=0.1];
\draw[fill, tpurple]  (0,2) circle [radius=0.1];

\node [tpurple, above left ] at (-0.6,1.5)   {$\B_{\langle s_3\rangle}$};

\draw[thick, red] (2,-1)--(2,0)--(2,1);
\draw[fill, red]  (2,-1) circle [radius=0.1];
\draw[fill, red]  (2,1) circle [radius=0.1];

\node [red, right] at (2,0)   {$\B_{\langle s_1, s_7 \rangle}$};

\end{scope}

\begin{scope}[shift={(-8,0)}]

 \node [black, below ] at (-5.5,-2)   {$\D$};


\draw[fill, vlgray] (-4,0)--(-2,3)--(2,3)--(4,0)--(2,-3)--(-2,-3)--(-4,0);



\draw[sru=torange] (-4,0)--(0,-4.5);
\draw[sru=torange](-2,3)  --(0,-4.5);
\draw[sru=torange](-2,-3)--(0,-4.5) ;
\draw[sru=torange](0,0)--(0,-4.5);
\draw[sru=torange](2,3)--(0,-4.5) ; 
\draw[sru=torange](2,-3)--(0,-4.5);
\draw[sru=torange](4,0) --(0,-4.5);

\draw[fill, vlgray, opacity=0.5] (-4,0)--(-2,3)--(2,3)--(4,0)--(2,-3)--(-2,-3)--(-4,0);

  \draw[fill, black]  (0,-4.5)  circle [radius=0.1];
  \node [black, below ] at (0,-4.5)   {$e$};


\draw[thick] (-4,0)--(0,0)--(4,0);
\draw[thick] (-2,3)--(0,0)--(2,-3);
\draw[thick] (2,3)--(0,0)--(-2,-3);
\draw[thick] (-4,0)--(-2,3)--(2,3)--(4,0)--(2,-3)--(-2,-3)--(-4,0);

\draw[fill]  (0,0)  circle [radius=0.1];
\draw[fill]  (-4,0)  circle [radius=0.1];
\draw[fill]  (-2,3)  circle [radius=0.1];
\draw[fill]  (2,3)  circle [radius=0.1];
\draw[fill]  (4,0)  circle [radius=0.1];
\draw[fill]  (2,-3)  circle [radius=0.1];
\draw[fill]  (-2,3)  circle [radius=0.1];
\draw[fill]  (-2,-3)  circle [radius=0.1];

 \draw[fill]  (-2,1)  circle [radius=0.07];
 \draw[fill]  (-2,-1)  circle [radius=0.07];
 \draw[fill]  (2,1)  circle [radius=0.07];
\draw[fill]  (2,-1)  circle [radius=0.07];
 \draw[fill]  (0,2)  circle [radius=0.07];
 \draw[fill]  (0,-2)  circle [radius=0.07];
\draw[fill]  (-3,-1.5)  circle [radius=0.07];
\draw[fill]  (-3,1.5)  circle [radius=0.07];
\draw[fill]  (3,-1.5)  circle [radius=0.07];
\draw[fill]  (3,1.5)  circle [radius=0.07];

 \draw[fill]  (-2,0)  circle [radius=0.07];

  \draw[fill]  (-1,-1.5)  circle [radius=0.07];
\draw[fill]  (1,-1.5)  circle [radius=0.07];
\draw[fill]  (1,1.5)  circle [radius=0.07];
\draw[fill]  (0,-3)  circle [radius=0.07];

\node [below left, black] at (-0.2,0.1)  {$s_7$};

\node [left, black] at (-4,0)  {$s_4$};
\node [black, above left] at (-2,3)   { $s_3$ };
\node [black, above right] at (2,3) {$s_2$};
\node [black, right] at (4,0)  {$s_1$};
\node [black, below right] at (2,-3)  {$s_6$};
\node [black, below left] at (-2,-3)   {$s_5$};



 \draw[very thin] (-3,1.5)--(0,0)--(3,-1.5);
 \draw[very thin] (3,1.5)--(0,0)--(-3,-1.5);
 \draw[very thin] (0,3)--(0,0)--(0,-3);
 \draw[very thin] (2,3)--(2,0)--(2,-3);
 \draw[very thin] (-2,3)--(-2,0)--(-2,-3);

\draw[very thin] (-4,0)--(2,3);
\draw[very thin] (-2,3)--(4,0);

\draw[very thin] (-4,0)--(2,-3);
\draw[very thin] (-2,-3)--(4,0);


\draw[thick, fill=lpurple] (-3,1.5)--(-2,1)--(-1,1.5)--(0,2)--(0,3)--(-2,3)--(-3,1.5);

\draw[thick, tpurple] (-3,1.5)--(-2,1)--(-1,1.5)--(0,2)--(0,3)--(-2,3)--(-3,1.5);
\draw[thick, tpurple] (-2,3)--(-2,1);
\draw[thick, tpurple] (-2,3)--(-1,1.5);
\draw[thick, tpurple] (-2,3)--(0,2);
\draw[fill, tpurple]  (-3,1.5)  circle [radius=0.1];
\draw[fill, tpurple]  (-2,1)  circle [radius=0.1];
\draw[fill, tpurple]  (-1,1.5) circle [radius=0.1];
\draw[fill, tpurple]  (0,2) circle [radius=0.1];
\draw[fill, tpurple]  (0,3) circle [radius=0.1];

\draw[fill, tpurple] (-2,3)  circle [radius=0.1];

\draw[thick, red] (2,-1)--(2,0)--(2,1);
\draw[fill, red]  (2,-1) circle [radius=0.1];
\draw[fill, red]  (2,1) circle [radius=0.1];
\draw[fill, red]  (2,0) circle [radius=0.1];

\node [tpurple, above  ] at (-1,3)   {$\D_{\langle s_3\rangle}$};
\node [red, right] at (1.9,-1.55)   {$\D_{\langle s_1, s_7 \rangle}$};

\end{scope}
\end{tikzpicture}
\caption{Complexes $\D$ and $\B$, and their cell structures. Complex $\B$ has one $2$--cell, six $1$--cells and six $0$--cells. 
Complex $\D$ has thirty-six $3$--cells, ninety-six $2$--cells, eighty-five $1$--cells and twenty-six $0$--cells. Panels of $\D$ and $\B$ corresponding to subgroup $\langle s_3 \rangle $ are green. Panels corresponding to subgroup $\langle s_1,s_7 \rangle $ are blue.}
\label{fig:simplecell}
\end{figure} 


 \begin{example}Finally, note that the discrepancy between dimensions of $\D$ and $\B$ can be arbitrarily large. The extreme example is when $L$ is an $n$--simplex for $n>0$. Then $W_L \cong {C_2}^{n+1}$ and so $\mathrm{dim}(\B)=0$, but $\mathrm{dim}(\D)=n+1$. In order to get an infinite group $W_L$, let $L$ consist of two $n$--simplices sharing a common $k$--simplex for some $0<k<n$. Then \[W_L\cong {C_2}^{n+1} \ast_{{C_2}^{k+1}} {C_2}^{n+1}\] is a virtually free group, so $\mathrm{dim}(\B)=1$ but still $\mathrm{dim}(\D)=n+1$.\end{example}

\subsection{Groups with $\mathrm{vcd}G < \ucd G$ and $\ucd G < \ugd G$.} 

Here we show that various known counterexamples to 
Brown's question and the generalised Eilenberg-Ganea conjecture (see e.g.,\ \cite{BLN}, \cite{LePe}), are the fundamental groups of simple complexes of groups. In particular, these examples show that the dimension bounds in Theorem~\ref{thm:main} are sharp.

\subsubsection*{Groups with $\ucd G < \ugd G$.} Suppose that $L$ is a $2$--dimensional acyclic polyhedron with a finite, non-trivial fundamental group. Let $W_L$ denote the right-angled Coxeter group associated to a flag triangulation of $L$, and let $W_L(\mathcal Q)$ denote the corresponding simple complex of groups. In this case we have $K \cong C(L')$ and thus $\mathrm{dim}(\D)=3$. By \cite[Proposition~5]{BLN} the polyhedron $L$ does not embed into a $2$--dimensional contractible polyhedron and therefore we have  $\mathrm{dim}(\B)=3$ as well (cf.\ Remark~\ref{rem:2dimacyclic}). In fact, by \cite[Proposition~4]{BLN} we have $\ucd W_L=2$ and $\ugd W_L=3$. Combining this with \cite[Theorem~5.4]{DMP}, which says that $\vcd W =\ucd W$ for any Coxeter group, we obtain the following. 

\begin{proposition}\label{prop:BLN}For $W_L$ as above we have
\[2=\vcd W_L =\ucd W_L < \ugd W_L = \mathrm{dim}(\B) =\mathrm{dim}(\D)=3.\]
\end{proposition}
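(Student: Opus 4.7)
The plan is to assemble the proposition from four ingredients: a direct computation of $\dim(K)$, a direct computation of $\dim(B)$ using the non-embedding result of \cite{BLN}, and two cited results giving the values of $\ucd W_L$, $\ugd W_L$, and $\vcd W_L$.

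First I would verify that $\dim(K) = 3$. Since $L$ is a $2$-dimensional flag complex, the poset $\mathcal{Q}$ of special spherical subgroups of $W_L$ is naturally isomorphic to the poset of simplices of $L$ together with an adjoined minimum corresponding to the trivial subgroup. Its geometric realization is therefore the cone on the barycentric subdivision $L'$, so $\dim(K) = \dim(L) + 1 = 3$.

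Next I would show $\dim(B) = 3$ by analyzing the panel $B_{\emptyset}$. Applying Lemma~\ref{lem:panelhtpy} to the restricted panel complexes over $\mathcal{Q}_{>\emptyset}$, the subpolyhedron $\flink{B}{\emptyset}$ is homotopy equivalent to $\flink{K}{\emptyset} = L'$, hence to $L$. In particular $\flink{B}{\emptyset}$ is acyclic and has finite non-trivial fundamental group, so it cannot be homotopy equivalent to a wedge of circles, forcing it to be at least $2$-dimensional; by the minimality in Definition~\ref{def:bestcx} it is exactly $2$-dimensional. By \cite[Proposition~5]{BLN}, a $2$-dimensional acyclic polyhedron with finite non-trivial fundamental group does not embed into any $2$-dimensional contractible polyhedron, so $\dim(B_{\emptyset}) \geq 3$, and hence $\dim(B) \geq 3$. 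For the upper bound I would note that $\flink{K}{J}$ has dimension at most $2$ for every $J \in \mathcal{Q}$, so the integer $d$ of Proposition~\ref{prop:dimbest} is at most $3$; that proposition then gives $\dim(B) \leq 3$, finishing the equality $\dim(B) = 3$.

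Finally I would cite \cite[Proposition~4]{BLN} for $\ucd W_L = 2$ and $\ugd W_L = 3$, and \cite[Theorem~5.4]{DMP} for $\vcd W_L = \ucd W_L$. Assembling these four ingredients gives the chain
\[ 2 = \vcd W_L = \ucd W_L < \ugd W_L = \dim(B) = \dim(K) = 3. \]

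The only genuinely delicate step is the lower bound $\dim(B) \geq 3$, since this is precisely the exceptional case $d = 2$ of Proposition~\ref{prop:dimbest} and is the content of Remark~\ref{rem:2dimacyclic}. Everything else is either a direct dimension count on $K$ or an application of results already recorded in the paper or in \cite{BLN} and \cite{DMP}; the hypotheses on $L$ (acyclic, $2$-dimensional, finite non-trivial $\pi_1$) are tailored precisely so that the non-embedding statement of \cite[Proposition~5]{BLN} applies at the final stage of the Bestvina construction.
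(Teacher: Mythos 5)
Your proof is correct and follows essentially the same route as the paper: compute $\dim(K)=3$ from $K\cong C(L')$, use \cite[Proposition~5]{BLN} to prevent $\B$ from being $2$--dimensional, and cite \cite[Proposition~4]{BLN} and \cite[Theorem~5.4]{DMP} for the remaining equalities. You fill in more detail than the paper's terse two sentences on $\dim(\B)$---notably the observation that the relevant object is $\flink{\B}{\emptyset}$ (homotopy equivalent to $L$ by Lemma~\ref{lem:panelhtpy}) rather than $L$ itself, and the separate lower and upper bounds---which is a welcome clarification of the same argument.
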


The proposition in particular shows that the dimension bounds in Theorems~\ref{thm:main} and \ref{thm:building} are sharp. An example of a polyhedron $L$ satisfying the assumptions of the proposition is given in Example~\ref{ex:a5action}.

In \cite[Proposition~4]{BLN} the main reason for which $\ugd W_L=3$ is that $L$ does not embed into a $2$--dimensional contractible polyhedron. Since this is the same reason for which $\mathrm{dim}(\B)=3$, the following question seems natural.

\begin{ques}Does there exist a strictly developable simple complex of finite groups $G(\mathcal Q)$ with the fundamental group $G$ such that $\ugd G=2$ but $\mathrm{dim}(D(\B, G(\mathcal Q)))=3$?
\end{ques}

Finally, in \cite[Remark~2]{Best} Bestvina asked whether a Coxeter group $W$ with $\vcd W=2$, admits a $2$--dimensional complex $B$. We remark that the groups $W_L$ described above answer this question in the negative.

\subsubsection*{Groups with $\vcd G < \ucd G$.}

Here we present an example of a group for which $\vcd G=2$ and $\ucd G=3$. This example is a certain finite extension of a right-angled Coxeter group, and originally is due to Leary and the first author \cite[Example~1]{LePe}.\medskip

Let $L$ be a finite flag simplicial complex and let $F$ be a finite group acting on $L$ by simplicial automorphisms. This gives an action of $F$ on the right-angled Coxeter group $W_L$, and thus a semi-direct product $G=W_L \rtimes F$. The group $G$ acts properly on Davis complex $\Sigma_{W_L}$. If the action of $F$ on $L$ has a strict fundamental domain $Y$, then the action of $G$ on $\Sigma_{W_L}$ has a strict fundamental domain which is homeomorphic to the cone $C(Y')$. This implies that $G$ is a fundamental group of a simple complex of groups over a poset corresponding to stabilisers of various subcomplexes of the fundamental domain $C(Y)$ (see \cite[Corollary~II.12.22]{BH}). In favourable cases one can describe this poset explicitly and use it to conclude information about $\vcd G$ and $\ucd G$.

\begin{example}\label{ex:a5action}
We will now outline a construction of an action of the alternating group $F=A_5$ on a flag $2$--complex $L$ which is a triangulation of the $2$--skeleton of the Poincar\'{e} homology sphere (see for example \cite[Example~1]{LePe}) in order to illustrate the underlying simple complex of finite groups $G(\mathcal{Q})$.

For the $1$--skeleton $L^{(1)}$ take the barycentric subdivision of the complete graph on five vertices $v_1, \dots, v_5$ with the permutation action of $A_5$.  The group $A_5$ has twenty-four elements of order $5$. They split into two conjugacy classes of size $12$. Every element of order $5$ is conjugate to its inverse. We fix one of these conjugacy classes and note that this gives six inverse pairs of $5$--cycles $\{\sigma_1, \sigma^{-1}_1\}, \dots, \{\sigma_6, \sigma^{-1}_6\}$. Define $L$, by attaching six $2$--cells $p_1, \dots ,p_6$ using the $5$--cycles $\sigma_1, \dots, \sigma_6$ to describe the attaching maps. Each $2$--cell $p_i$ is a cone on its subdivided pentagonal boundary where $\sigma_i$ acts by fixing the cone point.  The $2$--simplices of  $L$ are sixty right-angled triangles on which $A_5$ acts simply transitively.  

The fundamental domain for the action of $A_5$ on $L$ is a single right-angled triangle $Y$. The fundamental domain for the action of $G=W_L \rtimes A_5$ on $\Sigma_{W_L}$ is homeomorphic to $C(Y')$, the cone on the barycentric subdivision of $Y$. These domains, together with the stabilisers of vertices are presented in Figure~\ref{fig:reflike}.

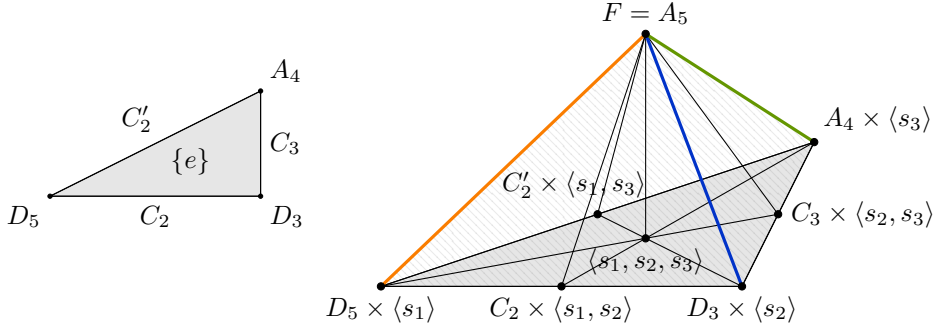
\begin{figure}[!h]
\centering
\begin{tikzpicture}[scale=0.4]

\definecolor{torange}{RGB}{250,125,000}
\definecolor{lorange}{RGB}{250,200,075}

\definecolor{vlgray}{RGB}{230,230,230}
\definecolor{tpurple}{RGB}{100,150,00}
\definecolor{lpurple}{RGB}{150,200,050}

\definecolor{blue}{RGB}{000,50,200}



\begin{scope}[shift={(-7,3)},scale=0.7]

\draw[fill=vlgray] (-5,0)--(5,0)--(5,5)--(-5,0);

\node   at (0.333*5,0.333*5)   {$\{e\}$};

\node [below left]  at (-5,0)   {$D_5$};

\node [below ]  at (0,0)   {$C_2$};

\node [below right]  at (5,0)   {$D_3$};

\node [right]  at (5,2.5)   {$C_3$};
\node [above right]  at (5,5)   {$A_4$};

\node [above left]  at (0.3,2.6)   {$C_2'$};

\draw (-5,0)--(5,0)--(5,5)--(-5,0);

\draw[fill]  (-5,0)  circle [radius=0.1];
\draw[fill]  (5,0)  circle [radius=0.1];
\draw[fill]  (5,5)  circle [radius=0.1];

\end{scope}

\begin{scope}[shift={(6.5,0)},scale=1.2]

\draw[fill=vlgray] (-5,0)--(5,0)--(7,4)--(-5,0);

\draw[pattern=north west lines, pattern color=gray, opacity=0.4] (-5,0)--(5,0)--(7,4)--(0.333*7,7)--(-5,0);


\node [above]  at (0.333*7,7)   {$F=A_5$};

\node [below ]  at (-5,0)   {$D_5 \times \langle s_1 \rangle$};

\node [below ]  at (0,0)   {$C_2 \times \langle s_1,s_2 \rangle$};

\node [below]  at (5,0)   {$D_3 \times \langle s_2 \rangle$};

\node [right]  at (6.1,2)   {$C_3 \times \langle s_2,s_3 \rangle$};

\node [above right]  at (7,4)   {$A_4 \times \langle s_3 \rangle$};

\node [below]  at (0.333*7,0.333*4)   {$\langle s_1,s_2, s_3 \rangle$};

\draw (0.333*7, 7)--(0.333*7,0.333*4);

\draw (-5,0)--(5,0)--(7,4)--(-5,0);

\draw (0,0)--(7,4);
\draw (5,0)--(1,2);
\draw (-5,0)--(6,2);

\draw (0.333*7, 7)--(0,0);
\draw (0.333*7, 7)--(7,4);
\draw (0.333*7, 7)--(5,0);
\draw (0.333*7, 7)--(1,2);
\draw (0.333*7, 7)--(-5,0);
\draw (0.333*7, 7)--(6,2);

\draw[very thick, torange] (-5,0)--(0.333*7,7);
\draw[very thick, blue] (5,0)--(0.333*7,7);
\draw[very thick, tpurple] (7,4)--(0.333*7,7);

\draw[fill]  (0,0)  circle [radius=0.1];

\draw[fill]  (1,2)  circle [radius=0.1];
\draw[fill]  (6,2)  circle [radius=0.1];
\draw[fill]  (0.333*7, 0.333*4)  circle [radius=0.1];

\draw[fill]  (-5,0)  circle [radius=0.1];
\draw[fill]  (5,0)  circle [radius=0.1];
\draw[fill]  (7,4)  circle [radius=0.1];

\draw[fill]  (0.333*7, 7)  circle [radius=0.1];


  \node [above left]  at (2.6,2.2)   {$C_2' \times \langle s_1,s_3 \rangle$};

\end{scope}

\end{tikzpicture}
\caption{Fundamental domains $Y$ (left) and $C(Y')$ (right) together with stabilisers of cells and vertices respectively.}
\label{fig:reflike}
\end{figure}

By \cite[Corollary~II.12.22]{BH} we get that $G$ is the fundamental group of a simple complex of groups $G(\mathcal Q')$, where $\mathcal Q'$ is the poset of simplices of $C(Y')$ and the local group at a simplex $\sigma$ is the intersection of stabilisers of vertices of $\sigma$. In this case the basic construction $D(\D, G(\mathcal Q'))$ (where $\D$ is the standard panel complex associated to $\mathcal Q'$)  is homeomorphic to Davis complex $\Sigma_{W_L}$.
However, the complex $G(\mathcal Q')$ does not satisfy the non-surjectivity assumption of Definition~\ref{def:scofg}.(\ref{it:scofg2}). To remedy this, one defines a new complex of groups $G(\mathcal Q)$ over a new poset $\mathcal Q$, roughly speaking, by identifying elements of $\mathcal Q'$ that have the same local group. The simple complex of groups $G(\mathcal Q)$ has the same fundamental group $G$, the standard complex $\D = \abs{\mathcal Q}$ is homeomorphic to $C(Y')$ and the basic construction $D(\D, G(\mathcal Q))$ is equivariantly homeomorphic to the Davis complex $\Sigma_{W_L}$.

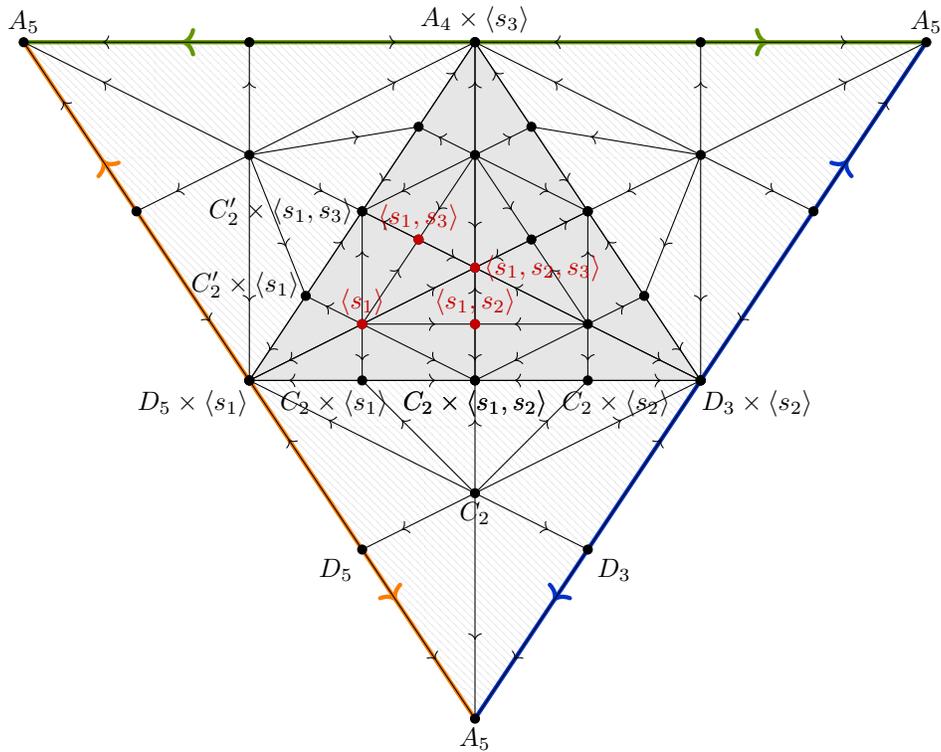
\begin{figure}[!h]
\centering
\begin{tikzpicture}[scale=0.75]

\definecolor{vlgray}{RGB}{230,230,230}


\definecolor{torange}{RGB}{250,125,000}
\definecolor{lorange}{RGB}{250,200,075}

\definecolor{vlgray}{RGB}{230,230,230}
\definecolor{tpurple}{RGB}{100,150,00}
\definecolor{lpurple}{RGB}{150,200,050}

\definecolor{blue}{RGB}{000,50,200}

\draw[pattern=north west lines, pattern color=gray, opacity=0.4] (-8,6)--(8,6)--(0,-6)--(-8,6);

\draw[fill, vlgray] (-4,0)--(4,0)--(0,6)--(-4,0);

\draw[ultra thick,tpurple, ->-] (0,6)--(-8,6);
\draw[ultra thick,tpurple, ->-] (0,6)--(8,6);
\draw[ultra thick,blue, ->-] (4,0)--(8,6);
\draw[ultra thick,blue, ->-] (4,0)--(0,-6);
\draw[ultra thick,torange, ->-] (-4,0)--(-8,6);
\draw[ultra thick,torange, ->-] (-4,0)--(0,-6);

\draw (-4,0)--(4,0);
\draw (-4,0)--(0,6);
\draw (4,0)--(0,6);

\draw (0,0)--(0,6);
\draw (-2,3)--(4,0);
\draw (-4,0)--(2,3);

\draw[ fill] (-4,0)  circle [radius=0.08];

\draw[ fill] (4,0)  circle [radius=0.08];

\draw[ fill] (0,6)  circle [radius=0.08];

\draw[ fill] (0,2)  circle [radius=0.08];)

\draw[ fill] (-2,3)  circle [radius=0.08];

\draw[ fill] (2,3)  circle [radius=0.08];

\draw[fill] (0,0)  circle [radius=0.08];



\draw[fill] (0,1)  circle [radius=0.08];
\draw[->-] (0,1)--(0,0);
\draw[->-] (0,1)--(0,2);

\draw[fill] (-1,2.5)  circle [radius=0.08];
\draw[->-] (-1,2.5)--(0,2);
\draw[->-] (-1,2.5)--(-2,3);

\draw[fill] (1,2.5)  circle [radius=0.08];
\draw[->-] (1,2.5)--(0,2);
\draw[->-] (1,2.5)--(2,3);

\draw[fill] (-2,1)  circle [radius=0.08];
\draw[->-] (-2,1)--(-4,0);
\draw[->-] (-2,1)--(-2,0);
\draw[->-] (-2,1)--(0,0);
\draw[->-] (-2,1)--(0,1);
\draw[->-] (-2,1)--(0,2);
\draw[->-] (-2,1)--(-1,2.5);
\draw[->-] (-2,1)--(-2,3);
\draw[->-] (-2,1)--(-3,1.5);

\draw[fill] (2,1)  circle [radius=0.08];

\draw[->-] (2,1)--(4,0);
\draw[->-] (2,1)--(2,0);
\draw[->-] (2,1)--(0,0);
\draw[->-] (2,1)--(0,1);
\draw[->-] (2,1)--(0,2);
\draw[->-] (2,1)--(1,2.5);
\draw[->-] (2,1)--(2,3);
\draw[->-] (2,1)--(3,1.5);

\draw[fill] (0,4)  circle [radius=0.08];

\draw[->-] (0,4)--(0,6);
\draw[->-] (0,4)--(1,2.5);
\draw[->-] (0,4)--(-2,3);
\draw[->-] (0,4)--(2,3);
\draw[->-] (0,4)--(0,2);
\draw[->-] (0,4)--(-1,2.5);
\draw[->-] (0,4)--(-1,4.5);
\draw[->-] (0,4)--(1,4.5);

\draw[fill] (-2,0)  circle [radius=0.08];
\draw[->-] (-2,0)--(-4,0);
\draw[->-] (-2,0)--(0,0);

\draw[fill] (2,0)  circle [radius=0.08];
\draw[->-] (2,0)--(4,0);
\draw[->-] (2,0)--(0,0);

\draw[fill] (-3,1.5)  circle [radius=0.08];
\draw[->-] (-3,1.5)--(-4,0);
\draw[->-] (-3,1.5)--(-2,3);

\draw[fill] (3,1.5)  circle [radius=0.08];
\draw[->-] (3,1.5)--(4,0);
\draw[->-] (3,1.5)--(2,3);

\draw[fill] (-1,4.5)  circle [radius=0.08];
\draw[->-] (-1,4.5)--(0,6);
\draw[->-] (-1,4.5)--(-2,3);

\draw[fill] (1,4.5)  circle [radius=0.08];
\draw[->-] (1,4.5)--(0,6);
\draw[->-] (1,4.5)--(2,3);


\begin{scope}[shift={(-4,6)},rotate={180},scale=1]

\draw[->-] (0,0)--(4,0);
\draw[->-] (0,0)--(-4,0);

\draw[->-] (2,3)--(4,0);
\draw[->-] (2,3)--(0,6);

\draw[->-] (0,2)--(0,0);
\draw[->-] (0,2)--(0,6);

\draw[->-] (0,2)--(-2,3);
\draw[->-] (0,2)--(-4,0);
\draw[->-] (0,2)--(4,0);
\draw[->-] (0,2)--(2,3);

\draw[->-] (0,2)--(-3,1.5);
\draw[->-] (0,2)--(-1,4.5);

\draw[ fill] (-4,0)  circle [radius=0.08];

\draw[ fill] (4,0)  circle [radius=0.08];

\draw[ fill] (0,6)  circle [radius=0.08];

\draw[ fill] (0,2)  circle [radius=0.08];)

\draw[ fill] (-2,3)  circle [radius=0.08];

\draw[ fill] (2,3)  circle [radius=0.08];

\draw[fill] (0,0)  circle [radius=0.08];

\end{scope}

\begin{scope}[shift={(4,6)},rotate={180},scale=1]

\draw[->-] (0,0)--(4,0);
\draw[->-] (0,0)--(-4,0);

\draw[->-] (-2,3)--(-4,0);
\draw[->-] (-2,3)--(0,6);

\draw[->-] (0,2)--(0,0);
\draw[->-] (0,2)--(0,6);

\draw[->-] (0,2)--(-2,3);
\draw[->-] (0,2)--(-4,0);
\draw[->-] (0,2)--(4,0);
\draw[->-] (0,2)--(2,3);

\draw[->-] (0,2)--(3,1.5);
\draw[->-] (0,2)--(1,4.5);

\draw[ fill] (-4,0)  circle [radius=0.08];

\draw[ fill] (4,0)  circle [radius=0.08];

\draw[ fill] (0,6)  circle [radius=0.08];

\draw[ fill] (0,2)  circle [radius=0.08];)

\draw[ fill] (-2,3)  circle [radius=0.08];

\draw[ fill] (2,3)  circle [radius=0.08];

\draw[fill] (0,0)  circle [radius=0.08];

\end{scope}

\begin{scope}[shift={(0,0)},rotate={180},scale=1]

\draw[->-] (-2,3)--(-4,0);
\draw[->-] (-2,3)--(0,6);

\draw[->-] (2,3)--(4,0);
\draw[->-] (2,3)--(0,6);

\draw[->-] (0,2)--(0,0);
\draw[->-] (0,2)--(0,6);

\draw[->-] (0,2)--(-2,3);
\draw[->-] (0,2)--(-4,0);
\draw[->-] (0,2)--(4,0);
\draw[->-] (0,2)--(2,3);

\draw[->-] (0,2)--(-2,0);
\draw[->-] (0,2)--(2,0);

\draw[ fill] (-4,0)  circle [radius=0.08];

\draw[ fill] (4,0)  circle [radius=0.08];

\draw[ fill] (0,6)  circle [radius=0.08];

\draw[ fill] (0,2)  circle [radius=0.08];)

\draw[ fill] (-2,3)  circle [radius=0.08];

\draw[ fill] (2,3)  circle [radius=0.08];

\draw[fill] (0,0)  circle [radius=0.08];

\end{scope}

\node [above]  at (8,6)   {$A_5$};
\node [above]  at (-8,6)   {$A_5$};
\node [below]  at (0,-6)   {$A_5$};
\node [above ]  at (0,6)   {$A_4 \times \langle s_3 \rangle$};
\node [below ]  at (0,0)   {$C_2 \times \langle s_1,s_2 \rangle$};
\node [below ]  at (-2.5,0)   {$C_2 \times \langle s_1 \rangle$};
\node [below ]  at (2.5,0)   {$C_2 \times \langle s_2 \rangle$};
\node [below ]  at (0,-2)   {$C_2$};
\node [below left]  at (-2,-3)   {$D_5$};

\node [below right]  at (2,-3)   {$D_3$};

\definecolor{purp}{RGB}{200,000,000}

\node [above, purp ]  at (-2,1)   {$ \langle s_1 \rangle$};

\node [above, purp]  at (0,1)   {$ \langle s_1,s_2 \rangle$};
\node [right, purp]  at (0,2)   {$ \langle s_1,s_2,s_3 \rangle$};
\node [above, purp]  at (-1,2.5)   {$ \langle s_1,s_3 \rangle$};

\draw[fill, purp] (-2,1)  circle [radius=0.08];
\draw[fill, purp] (0,1)  circle [radius=0.08];
\draw[fill, purp] (0,2)  circle [radius=0.08];

\draw[fill,purp] (-1,2.5)  circle [radius=0.08];

\node [left]  at (-2.95,1.7)   {$ C_2' \times \langle s_1 \rangle$};
\node [left]  at (-2,3)   {$ C_2' \times \langle s_1,s_3 \rangle$};

\node [below ]  at (0,0)   {$C_2 \times \langle s_1,s_2 \rangle$};

\node [below ]  at (-5,0)   {$D_5 \times \langle s_1 \rangle$};
\node [below]  at (5,0)   {$D_3 \times \langle s_2 \rangle$};




\end{tikzpicture}
\caption{A planar representation of the poset $\mathcal Q_{>U_0}$. The poset $\mathcal Q_{>U_0}$ is obtained by identifying pairs of green, blue and orange segments respectively. Assignment of some local subgroups is presented.}
\label{fig:reflikeposet}
\end{figure}

The poset $Q$ has the smallest element $U_0$ whose local subgroup is trivial, and the panel $\pan{\D}{U_0}$ is the entire complex $\D$. The subposet $\mathcal Q_{>U_0}$ is presented in Figure~\ref{fig:reflikeposet}. One verifies that $G(\mathcal Q)$ satisfies the assumptions of Definition~\ref{def:scofg}.(\ref{it:scofg2}). This, together with the fact that $D(\D, G(\mathcal Q)) \cong_G \Sigma_{W_L}$ is a model for $\ue G$ implies that we can apply Proposition~\ref{prop:bredondim} to calculate $\ucd G$.\end{example}

\begin{proposition}We have
\[2=\vcd G <\ucd G = \ugd G = \mathrm{dim}(\B) =\mathrm{dim}(\D)=3.\]
\end{proposition}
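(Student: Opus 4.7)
The plan is to establish each equality in the chain separately, with Theorem~\ref{thm:main} and Proposition~\ref{prop:bredondim} doing most of the work once $\ucd G$ has been pinned down.

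First, $\vcd G = 2$: the group $G = W_L \rtimes A_5$ is virtually torsion-free, since if $W_L' \trianglelefteqslant W_L$ is a torsion-free normal subgroup of finite index, then $W_L'$ is also normal and of finite index in $G$. By \cite[Proposition~4]{BLN} one has $\cd W_L' = \vcd W_L = 2$, and hence $\vcd G = 2$. Next, $\dim \D = 3$ is immediate since $\D \cong C(Y')$ is the cone on the $2$-dimensional polyhedron $Y'$, and $D(\D, G(\mathcal Q))$ is $G$-equivariantly homeomorphic to the $\mathrm{CAT}(0)$ complex $\Sigma_{W_L}$ which therefore serves as a $3$-dimensional model for $\underline{E}G$; in particular $\ugd G \leqslant 3$.

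The crux of the argument is to show $\ucd G \geqslant 3$, and I would do this via Proposition~\ref{prop:bredondim}: it suffices to exhibit some $J \in \mathcal Q$ with $\rH^{2}(K_{>J}) \ne 0$. The natural candidate is $J = U_0$, the minimal element with trivial local group, in which case $K_{>U_0} = |\mathcal Q_{>U_0}|$ is the explicit $2$-dimensional polyhedron obtained from Figure~\ref{fig:reflikeposet} after performing the three labelled edge identifications. The main obstacle is to compute $\rH^{2}$ of this polyhedron carefully from its cellular description, tracking the identifications of the green, blue and orange segments and reading off the cellular cochain complex. That a non-vanishing class must arise can be expected conceptually: $L$ is the $2$-skeleton of the Poincar\'e homology sphere, so $\pi_1(L)$ is the non-trivial binary icosahedral group, and it is precisely this torsion in the underlying combinatorics of the $A_5$-action on $L$ that should prevent $\ucd G$ from agreeing with $\vcd G = 2$.

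Combining the above, $3 \leqslant \ucd G \leqslant \ugd G \leqslant 3$ forces $\ucd G = \ugd G = 3$. Since $\ucd G \ne 2$, Theorem~\ref{thm:main} then yields $\dim D(\B, G(\mathcal Q)) = \dim \B = \ucd G = 3$. Together with $\vcd G = 2$ and $\dim \D = 3$, this establishes the full chain $2 = \vcd G < \ucd G = \ugd G = \dim \B = \dim \D = 3$.
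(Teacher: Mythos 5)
Your overall structure mirrors the paper's: you compute $\vcd G = 2$ by reducing to $W_L$, note $\dim\D = 3$ and $\ugd G \leqslant 3$ from the $\mathrm{CAT}(0)$ model $\Sigma_{W_L}$, aim to establish $\ucd G \geqslant 3$ via Proposition~\ref{prop:bredondim} applied at $J = U_0$, and then close the chain with Theorem~\ref{thm:main}. (Your route to the upper bound $\ucd G \leqslant 3$ via $\ucd G \leqslant \ugd G \leqslant 3$ is marginally slicker than the paper's, which verifies directly that $\dim\D_{>U} \leqslant 2$ for all $U \neq U_0$; both work.)

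However, the crux of the proposition — showing $\rH^2(K_{>U_0}) \neq 0$ — is left unproved. You correctly identify it as ``the main obstacle'' but then substitute a heuristic appeal to the torsion in $\pi_1(L)$ for the actual computation, and this heuristic is in fact aiming at the wrong phenomenon. The torsion in $\pi_1(L)$ is what forces $\ugd W_L = 3 > 2 = \ucd W_L$ (via non-embeddability of $L$ into a $2$--dimensional contractible polyhedron, Remark~\ref{rem:2dimacyclic}); it has nothing directly to do with why $\rH^2(K_{>U_0})$ is nonzero for the larger group $G$. The missing observation, which is the key point of the paper's proof, is purely geometric: $\D_{>U_0} = |\mathcal{Q}_{>U_0}|$ is precisely the boundary of the fundamental domain $\D \cong C(Y')$. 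Since $Y'$ is a $2$--disc (the barycentric subdivision of a triangle), $C(Y')$ is a $3$--ball and $\partial C(Y') \cong S^2$, so $\rH^2(K_{>U_0}) \cong \mathbb{Z}$ immediately, and Proposition~\ref{prop:bredondim} gives $\ucd G \geqslant 3$. Without this identification (or an equivalent explicit chain computation, which you declined to carry out), the argument for the inequality $\ucd G > \vcd G$ — the entire point of the example — is not established.

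Minor remark: for $\vcd G = 2$ the paper simply notes that $G$ is a finite extension of $W_L$, hence $\vcd G = \vcd W_L$; your version via a torsion-free normal subgroup works if you take the kernel of $W_L \to (C_2)^{|S|}$ (which is characteristic in $W_L$, hence normal in $G$), but you should say so rather than assert normality for an arbitrary $W_L'$.
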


\begin{proof}First we show that $\ucd G=3$. From the description of the poset $\mathcal Q_{>U_0}$ (see Figure~\ref{fig:reflikeposet}) it follows that \[\D_{>{U_0}}=\abs{\mathcal{Q}_{>U_0}}=\partial C(Y') \cong S^2,\] and since $\rH^2({S^2}) \cong \mathbb{Z}$, by Proposition~\ref{prop:bredondim} we have $\ucd G \geqslant 3$.  One verifies that for any other element $U \in \mathcal Q$, one has $\mathrm{dim}(\D_{>U}) \leqslant 2$ and thus $\rH^n(\D_{>U})=0$ for $n> 2$. This implies that $\ucd G = 3$, and consequently that $\ugd G =\mathrm{dim}(\B)=3$. 

It remains to show that $\vcd G=2$. For this, note that $G=W_L \rtimes F$ is a finite extension of $W_L$ so $\vcd G=\vcd W_L$. Since $L$ is $2$--dimensional, acyclic and it has a finite, non-trivial fundamental group (see \cite[Remark, p.~10]{BLN}), by Proposition~\ref{prop:BLN} we obtain $\vcd W_L=2$.
\end{proof}

\subsection{Locally $6$--large complexes of groups}
Another theory which provides tools for ensuring (strict) developability of complexes of groups is the so-called \emph{simplicial non-positive curvature}. This theory can be seen as a combinatorial counterpart of the theory of metric non-positive curvature. We refer the reader to \cite{JS} for a detailed treatment of the subject. The key concept is that of \emph{local $6$--largeness} (i.e.,\ simplicial non-positive curvature). This is a combinatorial condition imposed on a simplicial complex $X$ which endows $X$ with many non-positive curvature-like properties.

In particular, there is a notion of locally $6$--large simple complex of finite groups \cite[Definition~6.2]{JS}. Similarly as in the case of non-positively curved complexes of groups, such complex $G(\mathcal{Q})$ is always developable. Moreover, if $K$ is a locally $6$--large simplicial complex, then the standard development $D(\D, G(\mathcal{Q}))$ admits a structure of a $6$--\emph{systolic} simplicial complex (that is, a simply-connected and locally $6$--large) \cite[Theorem~6.1]{JS}. It follows by \cite[Theorem~E]{CO} that $D(\D, G(\mathcal{Q}))$ is a model for $\ue G$, where $G$ is the fundamental group of $G(Q)$. Thus Theorem~\ref{thm:main} applies to $G(\mathcal{Q})$.

On the other hand, the only examples of groups constructed this way are the simplices of groups \cite[\S19]{JS}. It is not hard to see that for these, the standard development is a model for $\ue G$ of optimal dimension (where $G$ is a fundamental group of such simplex of groups).

\appendix

\section{Bestvina complex over a ring $R$}\label{sec:appen}

Here we present a version of the Bestvina complex $\B^R$, where $R$ is a suitably nice ring. The complex $\B^R$ in general is not contractible, and thus the basic construction $D(\B^R, G(\mathcal Q))$ is not a model for $\ue G$. However, both $\B^R$ and $D(\B^R, G(\mathcal Q)$) are $R$--acyclic and therefore on the level of chain complexes, the basic construction $D(\B^R, G(\mathcal Q))$ may be seen as a `model' for $\ue G$. The main point here is that the dimension of  $D(\B^R, G(\mathcal Q))$ is equal to $\ucd_R G$, the Bredon cohomological dimension of $G$ over the ring $R$. Before making this statement precise we need to recall some terminology. We refer to \cite{luckbook} for a detailed account of Bredon cohomology.

Let $R$ be a commutative ring with unit, let $G$ be a discrete group and let $\mathcal{F}$ be the family of all finite subgroups of $G$. The \emph{orbit category} $\orb$ (over $R$) is the category defined by the objects which are the left coset spaces $G/H$ with $H \in \mathcal{F}$ and the morphisms which are $G$--equivariant maps between the objects. An \emph{$\orb$--module} is a contravariant functor $M \colon \orb \rightarrow R\mbox{--Mod}$. The \emph{category of $\orb$--modules}, denoted by $\orbmod$, is the category whose objects are $\orb$--modules and whose morphisms are natural transformations between these objects. A sequence \[0\rightarrow M' \rightarrow M \rightarrow M'' \rightarrow 0\]
in $\orbmod$ is called {\it exact} if it is exact after evaluating in $G/H$ for each $H \in \mathcal{F}$.  Let $M \in \orbmod$ and consider the left exact functor
\[ \nathom(M,-) \colon \orbmod \rightarrow R\mbox{--Mod} : N \mapsto \nathom(M,N), \]
where $\nathom(M,N)$ is the $R$--module of all natural transformations from $M$ to $N$. The module $M$ is said to be a \emph{projective $\orb$--module} if and only if this functor is exact. The module $F\in\orbmod$ is said to be a {\it free} $\orb$--module if $F\cong \bigoplus_{K\in \mathcal{I}} R[\mbox{map}_G(-, G/K)]$ for some subset $\mathcal{I}\subseteq \mathcal{F}$. It is not difficult to check that free modules are projective. It can also be shown that $\orbmod$ contains enough projective modules to construct projective resolutions. The resulting  $\mathrm{Ext}$--functors $\mathrm{Ext}^{n}_{\orb}(-,M)$ will have all the usual properties. The \emph{$n$--th Bredon cohomology of $G$ over $R$} with coefficients $M \in \orbmod$ is by definition
\[ \mathrm{H}^n_{\mathcal{F}}(G,M)= \mathrm{Ext}^{n}_{\orb}(\underline{R},M), \]
where $\underline{R}$ is the functor that maps all objects to $R$ and all morphisms to the identity map.  The \emph{Bredon cohomological dimension of $G$ over $R$} is defined to be 
\[ \ucd_R G = \sup\{ n \in \mathbb{N} \ | \ \exists M \in \orbmod :  \mathrm{H}^n_{\mathcal{F}}(G,M)\neq 0 \}. \]
Given a $G$--CW--complex $X$, a Bredon module \[\underline{C}_{n}(X, R)(-) \colon \mathcal O_{\mathcal{F}}G \to R\mbox{--Mod}\] is defined as \[\underline{C}_{n}(X, R)(G/H) = C_{n}(X^H, R),\] where $C_{\ast}(-,R)$ denotes the cellular chains with coefficients in $R$. Note that, in this way, the augmented cellular chain complex over $R$ of any model for $\ue G$ yields a free resolution of $\underline{R}$ which can then  be used to compute $\mathrm{H}_{\mF}^{\ast}(G,-)$. It follows that $\ucd_R G\leqslant \ugd G$.

\begin{theorem}\label{thm:algthm}
Let $G(\mathcal Q)$ be a strictly developable simple complex of finite groups with the fundamental group $G$, and let $R$ be either a prime field or a subring of $\mathbb Q$ that contains $1$. Then there is a panel complex $(\B^R, \{\B^R_J\}_{J \in \mathcal Q})$ over $\mathcal{Q}$ such that

\begin{enumerate} 
\item \label{it:algthm1} there is a chain homotopy equivalence of Bredon chain complexes
\[\underline{C}_{\ast}(D(\B^R, G(\mathcal{Q})), R) \to \underline{C}_{\ast}(D(\D, G(\mathcal{Q})),R).\]

\item \label{it:algthm2} if $D(\D, G(\mathcal{Q}))$ is  a model for $\underline{E}G$ then $\mathrm{dim}(D(\Br, G(\mathcal Q)))=
\ucd_R G$ and the Bredon chain complex  $\underline{C}_{\ast}(D(\B^R, G(\mathcal Q)), R)$ gives a free resolution of $\underline{R}$ of length equal to $\ucd_R G$. 
\end{enumerate} 
\end{theorem}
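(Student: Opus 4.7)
The plan is to mimic the construction and arguments of Sections~\ref{sec:scofgs}--\ref{sec:sdabc} with ``contractible'' systematically replaced by ``$R$-acyclic''. First I construct $\Br$ by reverse induction on $\mathcal Q$: for maximal $J$ set $\Br_J$ to be a point, and given $J$ such that $\flink{\Br}{J}$ is defined, choose $\Br_J$ to be a compact $R$-acyclic polyhedron of smallest possible dimension containing $\flink{\Br}{J}$. Existence of such a polyhedron whose dimension is controlled by the $R$-cohomology of $\flink{\Br}{J}$ uses the generalization of Lemma~\ref{lem:polyhedron} proved in \cite[Lemma~24]{LeSa}, which applies precisely when $R$ is a prime field or a subring of $\mathbb Q$. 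Running the proof of Proposition~\ref{prop:dimbest} verbatim with $R$-cohomology in place of integral cohomology yields
\[
\dim \Br \;=\; \begin{cases} d^R & \text{if } d^R \ne 2,\\ 2 \text{ or } 3 & \text{if } d^R = 2,\end{cases}
\qquad d^R = \max\{n : \rH^{n-1}(\flink{\Br}{J}; R) \ne 0 \text{ for some } J\}.
\]

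Next I establish the $R$-coefficient analogue of Proposition~\ref{prop:bredondim}. The Degrijse--Mart\'\i nez-P\'erez formula \cite[Theorem~5.1]{DMP} is proved at the chain level and works over any commutative ring, so under the hypothesis that $D(\D, G(\mathcal Q))$ is a model for $\underline E G$ one obtains
\[
\ucd_R G \;=\; \max\{n : \rH^{n-1}(K_{>J}; R) \ne 0 \text{ for some } J\}
\]
by repeating the Mayer--Vietoris reduction of Proposition~\ref{prop:bredondim} over $R$. To compare $d^R$ with $\ucd_R G$ I show that $\flink{\Br}{J}$ and $\flink{\D}{J}$ have isomorphic $R$-cohomology, by constructing inductively a panel-compatible chain map between their simplicial chain complexes that is a chain equivalence on each panel. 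Since every panel of $\D$ is contractible and every panel of $\Br$ is $R$-acyclic, such a chain map is produced step by step via the acyclic carrier theorem; this replaces the obstruction-theoretic argument of Lemma~\ref{lem:panelhtpy}, which required literal contractibility of the target.

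For part (\ref{it:algthm1}) I promote this panel-level chain equivalence to a $G$-equivariant chain map of Bredon chain complexes as in Lemma~\ref{lem:ghtpyeq}, noting that the basic construction carries the panel structure of the fundamental domain equivariantly. To verify it is a Bredon chain equivalence I check it on $H$-fixed points for every finite $H \leqslant G$: conjugating, one may assume $H \subseteq P_J$, and the fixed-point subcomplex $D(X, G(\mathcal Q))^H$ is itself a basic-construction-like union of panel subcomplexes, so the acyclic-carrier argument applies uniformly in $H$. For part (\ref{it:algthm2}), combining $\dim D(\Br, G(\mathcal Q)) = \dim \Br$ with the two dimension formulas above gives $\dim D(\Br, G(\mathcal Q)) = \ucd_R G$; since $D(\D, G(\mathcal Q))$ is a model for $\underline{E}G$, its augmented Bredon chain complex over $R$ is a free resolution of $\underline R$, and the chain equivalence of part (\ref{it:algthm1}) transports this to a free resolution by $\underline{C}_\ast(D(\Br, G(\mathcal Q)), R)$ of length $\ucd_R G$ (freeness is automatic since all cell stabilizers are conjugates of the finite groups $P_J$).

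The main obstacle is the chain-level replacement for the panel extension step of Lemma~\ref{lem:panelhtpy}: $R$-acyclicity does not permit direct obstruction-theoretic extensions of continuous maps, so one must set up an inductive acyclic-carrier scheme that is (i) panel-coherent along all comparisons $J < J'$ in $\mathcal Q$, and (ii) compatible with passage to $H$-fixed points for every finite subgroup $H$ of $G$. Once this bookkeeping is done, the rest of the argument is formally parallel to the proof of Theorem~\ref{thm:main}.
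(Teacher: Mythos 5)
Your overall strategy — replace ``contractible'' by ``$R$-acyclic'' throughout, build $\Br$ inductively from Lemma~24 of \cite{LeSa}, promote panel-level chain equivalences to equivariant ones, and verify the Bredon chain equivalence on $H$-fixed points — matches the paper's proof. But there is a genuine error in the dimension count.

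You claim that ``running the proof of Proposition~\ref{prop:dimbest} verbatim'' over $R$ yields the dichotomy
\[
\dim \Br = \begin{cases} d^R & \text{if } d^R \ne 2,\\ 2 \text{ or } 3 & \text{if } d^R = 2,\end{cases}
\]
but the verbatim proof of Proposition~\ref{prop:dimbest} relies on Lemma~\ref{lem:polyhedron}, which carries the restriction $n \ne 2$: a $2$-dimensional integrally acyclic polyhedron need not embed in a $2$-dimensional contractible one, because such a polyhedron can have a nontrivial finite fundamental group. The whole reason the appendix invokes \cite[Lemma~24]{LeSa} instead is that its conclusion (an $R$-acyclic polyhedron of the same dimension) holds without the restriction $n \ne 2$ when $R$ is a prime field or a subring of $\mathbb Q$. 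The paper says this explicitly: ``Note that as opposed to Proposition~\ref{prop:dimbest}, here we allow $n=2$.'' Consequently the correct formula is simply
\[
\dim(\Br) = \max\{n : \rH^{n-1}(\flink{\Br}{J}; R) \ne 0 \text{ for some } J\},
\]
with no dichotomy. Your version is not merely suboptimal: it is incompatible with what you then assert in part~(\ref{it:algthm2}), namely $\dim D(\Br, G(\mathcal Q)) = \ucd_R G$ unconditionally; if $d^R = 2$ could force $\dim \Br = 3$, that equality would fail. (Compare also the paper's closing Remark, which contrasts $\Br$ with $\B$ on precisely this point.) So you have imported a pathology of the integral case into a setting where it does not occur, and then silently discarded it when drawing the conclusion. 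Fix this by replacing Lemma~\ref{lem:polyhedron} in the inductive step with \cite[Lemma~24]{LeSa} and tracking that the $n=2$ case is now allowed; the rest of your argument then goes through and agrees with the paper's Steps~2--4.
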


Both, the construction of the complex $\B^R$, and the proofs of parts 
(\ref{it:algthm1}) and (\ref{it:algthm2}) of the above theorem are analogous to the constructions and proofs performed in Sections~\ref{sec:scofgs} and \ref{sec:sdabc}. The general principle is that at any step one replaces `contractible' with `$R$--acyclic' and instead of considering homotopy equivalences of $\mathrm{CW}$--complexes, one considers chain homotopy equivalences of chain complexes. 

\begin{proof}[Outline of the proof of Theorem~\ref{thm:algthm}]
 \noindent
 \vskip 0.1cm

\noindent
\textbf{Step 1.} Definition of the complex $\B^R$.\smallskip

To define $\Br$ we proceed the same as in Definition~\ref{def:bestcx}, except that instead of taking a contractible polyhedron, we take a compact $R$--acyclic polyhedron of the smallest dimension that contains $\rlink{\B^R}{J}$. By \cite[Lemma~24]{LeSa} the condition for existence of such polyhedron of dimension equal to $\mathrm{dim}(\rlink{\B^R}{J})=n$ is that \[\rH^n(\rlink{\B^R}{J},R)=0.\] Note that as opposed to Proposition~\ref{prop:dimbest}, here we allow $n=2$. 
Consequently, proceeding as in Proposition~\ref{prop:dimbest} we obtain 
\begin{equation}\label{eq:dimbestalg}\mathrm{dim}(B^R) =\mathrm{max}\{n \in \mathbb{N} \mid \rH^{n-1}\big(\cup_{J < J'}\rpan{\B^R}{J'}, R\big) \neq 0 \text{ for some } J \in \mathcal Q \}.\end{equation}
We remark that the assumption that $R$ is a prime field or a subring of $\mathbb Q$ that contains $1$ is needed in \cite[Lemma~24]{LeSa}.\medskip 

\noindent
\textbf{Step 2.} Existence and homotopy uniqueness of panel maps $ f \colon \C{\ast}{\Br} \to \C{\ast}{\D}$ and $ g \colon \C{\ast}{\D} \to \C{\ast}{\Br}$.\smallskip

For chain complexes $\C{\ast}{X}$ and $\C{\ast}{Y}$  over panel complexes $(X, \{X_J\}_{J \in \mathcal Q})$ and  $(Y, \{Y_J\}_{J \in \mathcal Q})$ by a \emph{panel chain map} we mean a chain map $f \colon \C{\ast}{X} \to \C{\ast}{Y}$ that for any panel $\rpan{X}{J}$ restricts to a chain map  $f_J \colon \C{\ast}{\rpan{X}{J}} \to \C{\ast}{\rpan{Y}{J}}$.

Recall that a \emph{chain homotopy} between chain maps $f,g \colon \C{\ast}{X} \to \C{\ast}{Y}$ is a sequence of maps $\psi_n \colon \C{n}{X} \to \C{n+1}{Y}$ such that for any $n \geqslant 0$ we have $f_n-g_n=d  \psi_n + \psi_{n-1} d$, where $f_n$ (resp.\ $g_n$) denote the restrictions of $f$ (resp.\ $g$) to $\C{n}{X}$. We say that $\{\psi_n\}_{n\geqslant 0}$ is a \emph{panel chain homotopy} if for every panel $\pan{X}{J}$ the map $\psi_n$ restricts to the map $\psi_n|_J \colon \C{n}{\pan{X}{J}} \to \C{n+1}{\pan{Y}{J}}$.\medskip

We have the following chain complex-analogue of Lemma~\ref{lem:panelhtpy}.

\begin{lemma}\label{lem:algpanelhtpy}
Let $(Y, \{\pan{Y}{J}\}_{J\in \mathcal Q})$ be a panel complex over $\mathcal Q$ such that for every $J \in \mathcal Q$ the panel $\pan{Y}{J}$ is $R$--acyclic. Then for any panel complex $(X, \{\pan{X}{J}\}_{J\in \mathcal Q})$ there is a panel chain map $\C{\ast}{X} \to \C{\ast}{Y}$ which is unique up to panel chain homotopy.
\end{lemma}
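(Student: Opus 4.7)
The plan is to mimic the proof of Lemma~\ref{lem:panelhtpy}, replacing the topological extension argument (based on contractibility of panels together with Hatcher's Lemma~4.7) with a standard homological lifting argument (based on $R$-acyclicity of panels together with freeness of cellular chains). The underlying principle is the following: given a relative CW pair $(X_J, A)$ with $A$ a subcomplex, and a chain map $\alpha \colon \C{\ast}{A} \to \C{\ast}{Y_J}$, if $Y_J$ is $R$-acyclic then $\alpha$ admits an extension $\tilde{\alpha} \colon \C{\ast}{X_J} \to \C{\ast}{Y_J}$. One builds $\tilde{\alpha}$ on the relative cells by induction on dimension: on a relative $0$-cell one picks any vertex of $Y_J$ (which is non-empty by $R$-acyclicity); for a relative $n$-cell $\sigma$ with $n \geq 1$, the element $\tilde\alpha(\partial\sigma) \in C_{n-1}(Y_J, R)$ is an $(n-1)$-cycle (augmenting to $0$ when $n=1$), hence bounds by $\widetilde H_{n-1}(Y_J, R) = 0$, and any primitive may be taken as $\tilde\alpha(\sigma)$.

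For \emph{existence}, I carry out downward induction on $\mathcal Q$ in exact parallel to Lemma~\ref{lem:panelhtpy}. For each maximal $J$, the lifting principle with $A = \emptyset$ supplies $f_J$. For the inductive step at $J$, the already-constructed maps $\{f_{J'}\}_{J' > J}$ assemble into a panel chain map on $\C{\ast}{\cup_{J<J'} X_{J'}} \to \C{\ast}{\cup_{J<J'} Y_{J'}}$; composing with the inclusion into $\C{\ast}{Y_J}$ and applying the lifting principle to $A = \cup_{J<J'}X_{J'} \subseteq X_J$ yields $f_J$. Taking $f = \cup_J f_J$ gives the desired panel chain map.

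For \emph{uniqueness up to panel chain homotopy}, I run the same scheme with homotopies in place of maps. Given two panel chain maps $f, g \colon \C{\ast}{X} \to \C{\ast}{Y}$, I build the panel chain homotopy $\psi_J$ on each $\C{\ast}{X_J}$ by downward induction. On a maximal $J$, the chain map $f_J - g_J$ lands in the $R$-acyclic $\C{\ast}{Y_J}$ and therefore admits a chain contraction, yielding $\psi_J$. On a non-maximal $J$, given $\psi_{J'}$ for $J'>J$ already constructed compatibly on the subcomplex $A = \cup_{J<J'} X_{J'}$, I extend $\psi_J$ over the relative cells of $(X_J, A)$ by the same dimension-by-dimension lifting procedure: for a relative $n$-cell $\sigma$, the required defining equation $\partial \psi_J(\sigma) = f_J(\sigma) - g_J(\sigma) - \psi_J(\partial\sigma)$ has right-hand side an $n$-cycle in $\C{\ast}{Y_J}$ by a direct computation (using that $f,g$ are chain maps and $\psi$ is a partial chain homotopy), which bounds by $R$-acyclicity.

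The main technical burden is bookkeeping compatibility of the inductively constructed pieces on overlaps of panels, but this is automatic from the downward-induction convention: an overlap $X_J \cap X_T$ is itself a union of panels $X_U$ with $U \geq J, T$, all of which have already been processed at earlier inductive stages, so the restrictions of $f_J$ and $f_T$ to $X_J \cap X_T$ are both equal to the map previously defined there. I note in passing that the lemma itself does not require the restrictive hypothesis on $R$ from Step~1 of the theorem --- the lifting argument goes through for any commutative ring with unit, using only freeness of cellular chains and $R$-acyclicity of panels.
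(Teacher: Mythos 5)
Your proposal is correct and follows essentially the same approach as the paper's (sketched) proof: downward induction over panels paralleling Lemma~\ref{lem:panelhtpy}, with, inside each panel, an induction on cell dimension in which $R$-acyclicity of the target panel supplies the needed primitives; you simply spell out the lifting principle and the cycle-condition verifications that the paper leaves implicit. Your closing observation that this lemma needs no restriction on the ring $R$ (the hypothesis that $R$ is a prime field or a subring of $\mathbb Q$ is only invoked for the cited Lemma~24 of \cite{LeSa}) is also accurate.
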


\begin{proof}The proof is essentially the same as that of Lemma~\ref{lem:panelhtpy}. We define the map on the basis of $\C{\ast}{X}$ (i.e.,\ on cells of $X$) and then extend it $R$--linearly to $\C{\ast}{X}$. In order to construct the map one proceeds by induction over panels, and for a given panel $\pan{X}{J}$, by induction on the dimension of cells in $\pan{X}{J}$. At any step, in order to extend the map to a given cell, one uses the fact that the target panel $\pan{Y}{J}$ is $R$--acyclic.   
\end{proof}

Lemma~\ref{lem:algpanelhtpy} implies the existence of claimed maps $ f \colon \C{\ast}{\Br} \to \C{\ast}{\D}$ and $ g \colon \C{\ast}{\D} \to \C{\ast}{\Br}$. Moreover, it implies that the composites $f \circ g$ and $g \circ f$ are panel chain homotopic to the identity maps on $\D$ and $\Br$ respectively. Let $\{\psi_n\}_{n\geqslant 0}$  and $\{\phi_n\}_{n\geqslant 0}$ denote the respective chain homotopies.\medskip

\noindent
\textbf{Step 3.} Proof of (\ref{it:algthm1}). \smallskip

First we need the following analogue of Lemma~\ref{lem:ghtpyeq}.

\begin{lemma}\label{lem:algghtpyeq}
Let $G(\mathcal Q)$ be a simple complex of finite groups over the poset $\mathcal{Q}$ and let $(X, \{\pan{X}{J}\}_{J\in \mathcal Q})$ and $(Y, \{\pan{Y}{J}\}_{J\in \mathcal Q })$ be two panel complexes over $\mathcal Q$. If $\C{\ast}{X}$ and $\C{\ast}{Y}$ are panel chain homotopy equivalent then $\C{\ast}{D(X, G(\mathcal{Q}))}$ and $\C{\ast}{D(Y, G(\mathcal{Q}))}$ are $G$--chain homotopy equivalent.\end{lemma}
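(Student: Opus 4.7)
The plan is to mimic the proof of Lemma~\ref{lem:ghtpyeq} on the chain level, with panel chain maps and panel chain homotopies playing the roles of panel maps and panel homotopies. Let $f\colon \C{\ast}{X}\to\C{\ast}{Y}$ and $h\colon\C{\ast}{Y}\to\C{\ast}{X}$ be the given panel chain maps, and let $\psi$ and $\phi$ be panel chain homotopies realising $h\circ f\simeq \mathrm{id}_{\C{\ast}{X}}$ and $f\circ h\simeq \mathrm{id}_{\C{\ast}{Y}}$ respectively. I will define $G$-equivariant extensions $\tilde f\colon \C{\ast}{D(X,G(\mathcal{Q}))}\to\C{\ast}{D(Y,G(\mathcal{Q}))}$, and similarly $\tilde h$, $\tilde\psi$, $\tilde\phi$, by the formula
\[\tilde f([g,\sigma]) \;=\; g\cdot f(\sigma),\]
where a cell $\sigma\subset X$ is identified with $[e,\sigma]\in D(X,G(\mathcal{Q}))$ and $f(\sigma)$ is viewed as a chain in $\C{\ast}{D(Y,G(\mathcal{Q}))}$ via the embedding $[e,-]$, then extended $R$-linearly and $G$-equivariantly.

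The only delicate point is well-definedness on equivalence classes. For an $n$-cell $\sigma\subset X$, let $J(\sigma)\in\mathcal{Q}$ be the unique element such that $\pan{X}{J(\sigma)}$ equals the intersection of all panels containing $\sigma$; the stabiliser of $[e,\sigma]$ in $D(X,G(\mathcal{Q}))$ is then precisely $P_{J(\sigma)}$. Since $f$ is a panel chain map, $f(\sigma)$ lies in $\C{\ast}{\pan{Y}{J(\sigma)}}$, so every cell $\tau$ in its support satisfies $J(\tau)\geqslant J(\sigma)$ and hence $P_{J(\sigma)}\subseteq P_{J(\tau)}$. Therefore, for any $p\in P_{J(\sigma)}$, each summand satisfies $[gp,\tau]=[g,\tau]$ in $D(Y,G(\mathcal{Q}))$, so $\tilde f([gp,\sigma])=\tilde f([g,\sigma])$. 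The same argument handles $\tilde h$, while for $\tilde\psi$ and $\tilde\phi$ one uses that panel chain homotopies respect panels in exactly the same sense.

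Once well-definedness is in place, $G$-equivariance is immediate, and the fact that $\tilde f$ is a chain map follows from $f$ being a chain map together with the observation that the cellular boundary on $\C{\ast}{D(X,G(\mathcal{Q}))}$ is induced translate-by-translate from that on $\C{\ast}{X}$. The chain-homotopy identities such as $\tilde h\circ \tilde f-\mathrm{id}=d\tilde\psi+\tilde\psi d$ follow formally: both sides are $G$-equivariant, and they coincide on the fundamental domain $[e,X]$ by the identity $h\circ f-\mathrm{id}=d\psi+\psi d$ on $\C{\ast}{X}$. The hard part of the argument is therefore entirely concentrated in the well-definedness verification of the previous paragraph; the remainder is direct equivariant bookkeeping parallel to Lemma~\ref{lem:ghtpyeq}. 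I note further that because $\tilde f,\tilde h,\tilde\psi,\tilde\phi$ arise from $G$-equivariant cellular formulas, they automatically restrict to fixed-point subcomplexes, which will be needed in the proof of Theorem~\ref{thm:algthm}(\ref{it:algthm1}) to upgrade this $G$-chain homotopy equivalence to one of Bredon chain complexes.
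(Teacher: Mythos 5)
Your proof is correct and follows the same approach the paper intends: the paper's proof is simply "the same as Lemma~\ref{lem:ghtpyeq}," i.e.\ define $\tilde f$, $\tilde h$, $\tilde\psi$, $\tilde\phi$ on the chain level by the formula $[g,\sigma]\mapsto g\cdot f(\sigma)$ and verify equivariance and the homotopy identities by transporting the corresponding identities on the fundamental domain. You helpfully spell out the well-definedness (that $P_{J(\sigma)}\subseteq P_{J(\tau)}$ for cells $\tau$ in the support of $f(\sigma)$, because $f$ is a panel chain map so $\tau\subset\pan{Y}{J(\sigma)}$), which is exactly the point the paper's phrase "one easily checks" glosses over in the topological version.
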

Here the $G$--action on $\C{\ast}{D(X, G(\mathcal{Q}))}$ (resp.\ $\C{\ast}{D(Y, G(\mathcal{Q}))}$) is induced by the $G$--action on $D(X, G(\mathcal{Q}))$ (resp.\ $D(Y, G(\mathcal{Q}))$.

\begin{proof}The proof is the same as that of Lemma~\ref{lem:ghtpyeq}.\end{proof}

Given maps $f$ and $g$ and homotopies $\{{\psi}_n\}_{n\geqslant 0}$ and $\{\phi_n\}_{n\geqslant 0}$ constructed in Step~2, Lemma~\ref{lem:algghtpyeq} gives $G$--chain maps
\begin{align*} 
\tilde{f} & \colon C_{\ast}(D(\Br, G(\mathcal{Q})),R) \to C_{\ast}(D(\D, G(\mathcal{Q})),R),\\
\tilde{g} & \colon C_{\ast}(D(\D, G(\mathcal{Q})),R) \to C_{\ast}(D(\Br, G(\mathcal{Q})),R), \end{align*}
and $G$--chain homotopies
\begin{align*}
\{\tilde{\phi}_n\}_{n\geqslant 0} & \colon C_{\ast}(D(\Br, G(\mathcal{Q})),R) \to C_{\ast}(D(\Br, G(\mathcal{Q})),R),\\
\{\tilde{\psi}_n\}_{n\geqslant 0} & \colon C_{\ast}(D(\D, G(\mathcal{Q})),R) \to C_{\ast}(D(\D, G(\mathcal{Q})),R). 
\end{align*}
Since the $G$--action on $\C{\ast}{D(\Br, G(\mathcal{Q}))}$ (resp.\ $\C{\ast}{D(\D, G(\mathcal{Q}))}$) is induced by the one on $D(\Br, G(\mathcal{Q}))$ (resp.\ $D(\D, G(\mathcal{Q}))$, for any finite subgroup $H \subset G$ all the above maps restrict to the subcomplex $\C{\ast}{D(\Br, G(\mathcal{Q}))^H}$ (resp. $\C{\ast}{D(\D, G(\mathcal{Q}))^H}$), thus ensuring that the restriction
\[\tilde{f}^H \colon C_{\ast}(D(\Br, G(\mathcal{Q}))^H,R) \to C_{\ast}(D(\D, G(\mathcal{Q}))^H,R)\]
is a chain homotopy equivalence. One verifies that the restrictions $\tilde{f}^H$ for various $H \subset G$ are compatible with the morphisms in the category $\orb$ and thus \[\tilde{f}^{(-)} \colon \underline{C}_{\ast}(D(\B^R, G(\mathcal{Q})), R)(-) \to \underline{C}_{\ast}(D(\D, G(\mathcal{Q})),R)(-)\] is the required chain homotopy equivalence of Bredon chain complexes.
\medskip

\noindent
\textbf{Step 4.} Proof of (\ref{it:algthm2}).\smallskip

Since $D(\D, G(\mathcal{Q}))$ is a model for $\ue G$ we get that for any finite subgroup $H \subset G$ the fixed point set $D(\D, G(\mathcal{Q}))^H$ is contractible, and thus in particular the chain complex $C_{\ast}(D(\D, G(\mathcal{Q}))^H, R)
$ is $R$--acyclic. By (\ref{it:algthm1}) we get a chain homotopy equivalence \[C_{\ast}(D(\Br, G(\mathcal{Q}))^H, R) \to C_{\ast}(D(\D, G(\mathcal{Q}))^H,R)\] and therefore the chain complex $C_{\ast}(D(\Br, G(\mathcal{Q}))^H, R)$ is $R$--acyclic as well. Thus we obtain that $\underline{C}_{\ast}(D(\Br, G(\mathcal Q)), R)$ is a free resolution of $\underline{R}$ of length equal to $\mathrm{dim}(D(\Br, G(\mathcal Q)))=\mathrm{dim}(\Br) $.

It remains to show that $\mathrm{dim}(\B^R)= \ucd_R G$. By \eqref{eq:dimbestalg} we have \begin{equation*}\mathrm{dim}(B^R) =\mathrm{max}\{n \in \mathbb{N} \mid \rH^{n-1}\big(\cup_{J < J'}\rpan{\B^R}{J'}, R\big) \neq 0 \text{ for some } J \in \mathcal Q \}.\end{equation*} 
Using panel maps $f$ and $g$ constructed in Step~2 of this proof, by restricting them to $
\C{\ast}{\rlink{\Br}{J}}$ and $\C{\ast}{\flink{\D}{J}}$ respectively, we get that $
\C{\ast}{\rlink{\Br}{J}}$ and $\C{\ast}{\flink{\D}{J}}$ are chain homotopy equivalent. Thus we obtain \begin{equation}\label{eq:dimbestalg2}\mathrm{dim}(B^R) =\mathrm{max}\{n \in \mathbb{N} \mid \rH^{n-1}\big(\cup_{J < J'}\rpan{\D}{J'}, R\big) \neq 0 \text{ for some } J \in \mathcal Q \}.\end{equation}

Note that if $R=\mathbb{Z}$ then the right-hand side of the above formula is equal to (\ref{eq:dimdavis}) and thus equal to $\ucd G$. The claim is that the same holds for the ring $R$. For this we need to show that the formula \eqref{eq:dimdieter} holds over $R$, i.e.,\ that we have \begin{equation}\label{eq:dimgeneraliseddieter}\ucd_R G = \mathrm{max}\{ n \in \mathbb{N} \mid \rH^n(K_{\Omega_J},K_{>\Omega_J}, R) \neq 0 \text{ for some } J \in \mathcal Q\}.\end{equation} One verifies that the proof of the above formula in \cite[Theorem 5.1]{DMP} carries through over $R$. Given \eqref{eq:dimgeneraliseddieter}, by applying Proposition~\ref{prop:bredondim} (over $R$) we obtain that the right-hand side of \eqref{eq:dimbestalg2} is equal to $\ucd_R G$.
\end{proof}




\begin{remark}Note that, as opposed to $\B$, for $\Br$ the equality $\mathrm{dim}(\Br)=\ucd_R G$ holds also when $\ucd_R G=2$. In particular, the complex $\B$ may be different from $ B^{\mathbb{Z}}$.
\end{remark}

Observe that a $1$--dimensional $\mathrm{CW}$--complex is $R$--acyclic if and only if it is contractible. Thus if $\ucd_R G \leqslant 1$ we get that $\Br = \B$ and hence $ \ucd G \leqslant 1$. Therefore we obtain the following strengthened version of Theorem~\ref{thm:virtfree}.

\begin{corollary}Let $G(\mathcal Q)$ be a strictly developable simple complex of finite groups over the poset $\mathcal{Q}$ with the fundamental group $G$. Suppose that $D(\D, G(\mathcal{Q}))$ is  a model for $\underline{E}G$ and that $\ucd_R  G\leqslant 1$. Then $D(\D, G(\mathcal{Q}))$ equivariantly deformation retracts onto the tree $D(\B, G(\mathcal{Q}))$.
\end{corollary}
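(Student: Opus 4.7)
The plan is to reduce the corollary directly to Theorem~\ref{thm:virtfree} by showing that under the hypothesis $\ucd_R G \leqslant 1$ one can take the Bestvina complex $\B^R$ over $R$ to equal $\B$, and hence that $\ucd G \leqslant 1$. The whole argument is an assembly of Theorem~\ref{thm:algthm}, Theorem~\ref{thm:main}, and Theorem~\ref{thm:virtfree}, with the single elementary bridging observation that in dimension at most one, $R$--acyclicity is the same as contractibility.

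First I would invoke Theorem~\ref{thm:algthm}(\ref{it:algthm2}): since $D(\D, G(\mathcal{Q}))$ is a model for $\ue G$, we obtain $\mathrm{dim}(\B^R) = \mathrm{dim}(D(\B^R, G(\mathcal{Q}))) = \ucd_R G \leqslant 1$. In particular every subpolyhedron $\B^R_J$ appearing in the inductive construction of $\B^R$ (Step~1 of the proof of Theorem~\ref{thm:algthm}) has dimension at most one. Next I would record the elementary fact that a compact $1$--dimensional CW--complex $X$ is $R$--acyclic if and only if it is contractible: for any $1$--complex $H_1(X;\Z)$ is free abelian, so $H_1(X;R)=0$ forces $H_1(X;\Z)=0$, whence $X$ is a disjoint union of trees, and $\widetilde{H}_0(X;R)=0$ then forces $X$ to be a single tree.

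Consequently, at every stage of the recursion, the smallest--dimensional compact $R$--acyclic polyhedron containing the link (as required in the construction of $\B^R$) coincides with the smallest--dimensional compact contractible polyhedron containing the link (as required in Definition~\ref{def:bestcx} for $\B$). Hence the two constructions can be synchronised to yield $\B^R = \B$, and in particular $\mathrm{dim}(\B) \leqslant 1$. By Theorem~\ref{thm:main}(\ref{it:main2}), $\mathrm{dim}(\B)$ equals $\ucd G$ whenever $\ucd G \neq 2$ and lies in $\{2,3\}$ when $\ucd G = 2$; the inequality $\mathrm{dim}(\B) \leqslant 1$ rules out the second alternative and forces $\ucd G \leqslant 1$.

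At this point the hypotheses of Theorem~\ref{thm:virtfree} are satisfied, and it delivers the $G$--equivariant deformation retraction of $D(\D, G(\mathcal{Q}))$ onto the tree $D(\B, G(\mathcal{Q}))$. There is no real obstacle to the argument; the only thing to be careful about in a full writeup is the synchronisation of the two recursive constructions, which is immediate once one notes that both procedures make the same choice of polyhedron at every step in view of the dimension--one equivalence between $R$--acyclicity and contractibility.
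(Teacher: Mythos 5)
Your argument is correct and is essentially the paper's own proof, spelled out in more detail: the paper also observes that a $1$--dimensional CW--complex is $R$--acyclic if and only if it is contractible, concludes $\B^R=\B$ and hence $\ucd G\leqslant 1$, and then invokes Theorem~\ref{thm:virtfree}. Your extra care about synchronising the two recursive constructions (inducting downward through $\mathcal Q$ and using $\dim\B^R=\ucd_R G\leqslant 1$ to bound the links) is precisely the implicit content of the paper's one--line deduction and is handled correctly.
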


\begin{bibdiv}

\begin{biblist}


\bib{Ash}{article}{
    AUTHOR = {Ash, Avner},
     TITLE = {Deformation retracts with lowest possible dimension of
              arithmetic quotients of self-adjoint homogeneous cones},
   JOURNAL = {Math. Ann.},
  FJOURNAL = {Mathematische Annalen},
    VOLUME = {225},
      YEAR = {1977},
    NUMBER = {1},
     PAGES = {69--76},
}

\bib{BCH}{article}{
    AUTHOR = {Baum, Paul},
    AUTHOR = {Connes, Alain},
    AUTHOR = {Higson, Nigel},
     TITLE = {Classifying space for proper actions and {$K$}-theory of group
              {$C^\ast$}-algebras},
 BOOKTITLE = {{$C^\ast$}-algebras: 1943--1993 ({S}an {A}ntonio, {TX}, 1993)},
    SERIES = {Contemp. Math.},
    VOLUME = {167},
     PAGES = {240--291},
 PUBLISHER = {Amer. Math. Soc., Providence, RI},
      YEAR = {1994},
}

\bib{Best}{article}{
   author={Bestvina, Mladen},
   title={The virtual cohomological dimension of Coxeter groups},
   conference={
      title={Geometric group theory, Vol.\ 1},
      address={Sussex},
      date={1991},
   },
   book={
      series={London Math. Soc. Lecture Note Ser.},
      volume={181},
      publisher={Cambridge Univ. Press, Cambridge},
   },
   date={1993},
   pages={19--23},
}

\bib{BLN}{article}{
   author={Brady, Noel},
   author={Leary, Ian J.},
   author={Nucinkis, Brita E. A.},
   title={On algebraic and geometric dimensions for groups with torsion},
   journal={J. London Math. Soc. (2)},
   volume={64},
   date={2001},
   number={2},
   pages={489--500},
}

\bib{BH}{book}{
  author={Bridson, Martin R.},
  author={Haefliger, Andr{\'e}},
  title={Metric spaces of non-positive curvature},
  series={Grundlehren der Mathematischen Wissenschaften [Fundamental
    Principles of Mathematical Sciences]},
  volume={319},
  publisher={Springer-Verlag, Berlin},
  date={1999},
  pages={xxii+643},
}

\bib{brownwall}{article}{
    AUTHOR = {Brown, Kenneth S.},
     TITLE = {Groups of virtually finite dimension},
 BOOKTITLE = {Homological group theory ({P}roc. {S}ympos., {D}urham, 1977)},
    SERIES = {London Math. Soc. Lecture Note Ser.},
    VOLUME = {36},
     PAGES = {27--70},
 PUBLISHER = {Cambridge Univ. Press, Cambridge-New York},
      YEAR = {1979},
}  

\bib{brownco}{book}{
    AUTHOR = {Brown, Kenneth S.},
     TITLE = {Cohomology of groups},
    SERIES = {Graduate Texts in Mathematics},
    VOLUME = {87},
 PUBLISHER = {Springer-Verlag, New York-Berlin},
      YEAR = {1982},
     PAGES = {x+306},
      ISBN = {0-387-90688-6},
   MRCLASS = {20-02 (18-01 20F32 20J05 55-01)},
  MRNUMBER = {672956},
MRREVIEWER = {Ross Staffeldt},
}

\bib{CO}{article}{
   author={Chepoi, Victor},
   author={Osajda, Damian},
   title={Dismantlability of weakly systolic complexes and applications},
   journal={Trans. Amer. Math. Soc.},
   volume={367},
   date={2015},
   number={2},
   pages={1247--1272},
}

\bib{Davbuild}{article}{
   author={Davis, Michael W.},
   title={Buildings are $\mathrm{CAT}(0)$},
   conference={
      title={Geometry and cohomology in group theory},
      address={Durham},
      date={1994},
   },
   book={
      series={London Math. Soc. Lecture Note Ser.},
      volume={252},
      publisher={Cambridge Univ. Press, Cambridge},
   },
   date={1998},
   pages={108--123},
}

\bib{Davbook}{book}{
   author={Davis, Michael W.},
   title={The geometry and topology of Coxeter groups},
   series={London Mathematical Society Monographs Series},
   volume={32},
   publisher={Princeton University Press, Princeton, NJ},
   date={2008},
   pages={xvi+584},
}

\bib{Davis}{article}{
    AUTHOR = {Davis, Michael W.},
     TITLE = {Groups generated by reflections and aspherical manifolds not
              covered by {E}uclidean space},
   JOURNAL = {Ann. of Math. (2)},
  FJOURNAL = {Annals of Mathematics. Second Series},
    VOLUME = {117},
      YEAR = {1983},
    NUMBER = {2},
     PAGES = {293--324},
      ISSN = {0003-486X},
}

\bib{DMP}{article}{
   author={Degrijse, Dieter},
   author={Mart\'\i nez-P\'erez, Conchita},
   title={Dimension invariants for groups admitting a cocompact model for
   proper actions},
   journal={J. Reine Angew. Math.},
   volume={721},
   date={2016},
   pages={233--249},
   issn={0075-4102},
}

\bib{Dun}{article}{
    AUTHOR = {Dunwoody, M. J.},
     TITLE = {Accessibility and groups of cohomological dimension one},
   JOURNAL = {Proc. London Math. Soc. (3)},
  FJOURNAL = {Proceedings of the London Mathematical Society. Third Series},
    VOLUME = {38},
      YEAR = {1979},
    NUMBER = {2},
     PAGES = {193--215},
}


 
 
\bib{Har}{article}{
    AUTHOR = {Harer, John L.},
     TITLE = {The virtual cohomological dimension of the mapping class group
              of an orientable surface},
   JOURNAL = {Invent. Math.},
  FJOURNAL = {Inventiones Mathematicae},
    VOLUME = {84},
      YEAR = {1986},
    NUMBER = {1},
     PAGES = {157--176},
}

\bib{HarMei}{article}{
   author={Harlander, Jens},
   author={Meinert, Holger},
   title={Higher generation subgroup sets and the virtual cohomological
   dimension of graph products of finite groups},
   journal={J. London Math. Soc. (2)},
   volume={53},
   date={1996},
   number={1},
   pages={99--117},
}

\bib{Hat}{book}{
   author={Hatcher, Allen},
   title={Algebraic topology},
   publisher={Cambridge University Press, Cambridge},
   date={2002},
   pages={xii+544},
}

\bib{JS}{article}{
   author={Januszkiewicz, Tadeusz},
   author={{\'S}wi{\c a}tkowski, Jacek},
   title={Simplicial nonpositive curvature},
   journal={Publ. Math. Inst. Hautes \'Etudes Sci.},
   number={104},
   date={2006},
   pages={1--85},
}

\bib{LePe}{article}{
    AUTHOR = {Leary, Ian J.},
    AUTHOR={Petrosyan, Nansen},
     TITLE = {On dimensions of groups with cocompact classifying spaces for
              proper actions},
   JOURNAL = {Adv. Math.},
  FJOURNAL = {Advances in Mathematics},
    VOLUME = {311},
      YEAR = {2017},
     PAGES = {730--747},
}

\bib{LeSa}{article}{
   author={Leary, Ian J.},
   author={Saadeto\u glu, M\"uge},
   title={The cohomology of Bestvina-Brady groups},
   journal={Groups Geom. Dyn.},
   volume={5},
   date={2011},
   number={1},
   pages={121--138},
   issn={1661-7207},
   review={\MR{2763781}},
}

\bib{luckbook}{book}{
    AUTHOR = {L\"uck, W.},
     TITLE = {Transformation groups and algebraic {$K$}-theory},
    SERIES = {Lecture Notes in Mathematics},
    VOLUME = {1408},
      NOTE = {Mathematica Gottingensis},
 PUBLISHER = {Springer-Verlag, Berlin},
      YEAR = {1989},
}

\bib{Lucksurvey}{article}{,
    AUTHOR = {L\"uck, W.},
     TITLE = {Survey on classifying spaces for families of subgroups},
 BOOKTITLE = {Infinite groups: geometric, combinatorial and dynamical
              aspects},
    SERIES = {Progr. Math.},
    VOLUME = {248},
     PAGES = {269--322},
 PUBLISHER = {Birkh\"auser, Basel},
      YEAR = {2005},
}

\bib{LuckMeintrup}{article}{,
    AUTHOR = {L\"uck, W.}
    author= {Meintrup, D.},
     TITLE = {On the universal space for group actions with compact
              isotropy},
 BOOKTITLE = {Geometry and topology: {A}arhus (1998)},
    SERIES = {Contemp. Math.},
    VOLUME = {258},
     PAGES = {293--305},
 PUBLISHER = {Amer. Math. Soc., Providence, RI},
      YEAR = {2000},
}

\bib{Vog}{article}{,
    AUTHOR = {Vogtmann, Karen},
     TITLE = {Automorphisms of free groups and outer space},
 BOOKTITLE = {Proceedings of the {C}onference on {G}eometric and
              {C}ombinatorial {G}roup {T}heory, {P}art {I} ({H}aifa, 2000)},
   JOURNAL = {Geom. Dedicata},
  FJOURNAL = {Geometriae Dedicata},
    VOLUME = {94},
      YEAR = {2002},
     PAGES = {1--31},
}

\end{biblist}
\end{bibdiv}

\end{document}